\documentclass{cocv}

\usepackage{anyfontsize}
\usepackage{graphicx}
\usepackage{tikz}
\usetikzlibrary{arrows.meta,positioning,intersections,shapes,arrows}
\usepackage{amsmath,amssymb,amsfonts,mathtools,bm}
\usepackage{xcolor}
\usepackage{enumitem}
\usepackage[hidelinks]{hyperref}
\allowdisplaybreaks

\theoremstyle{definition}
\newtheorem{assmptn}[thrm]{Assumption}
\theoremstyle{definition}
\newtheorem*{dfntnintro}{Definition}
\theoremstyle{plain}

\DeclarePairedDelimiterX{\setdef}[2]{\{}{\}}{#1\,\delimsize\vert\,\mathopen{} #2}
\DeclarePairedDelimiterX{\scprod}[2]{\langle}{\rangle}{#1,#2}
\newcommand{\dd}{\mathrm{d}}
\newcommand{\Lb}{\mathcal{L}_{\mathrm{b}}}
\newcommand{\Lp}[1]{\mathrm{L}^{#1}}
\newcommand{\Wkp}[1]{\mathrm{W}^{#1}}
\newcommand{\Hk}[1]{\mathrm{H}^{#1}}
\newcommand{\X}{\mathcal{X}}
\newcommand{\U}{\mathcal{U}}
\newcommand{\Y}{\mathcal{Y}}
\newcommand{\loc}{\mathrm{loc}}
\newcommand{\ext}{\operatorname{ext}}
\newcommand{\dom}{\operatorname{dom}}
\newcommand{\im}{\operatorname{im}}
\newcommand{\R}{\mathbb{R}}
\newcommand{\N}{\mathbb{N}}
\newcommand{\sbvek}[2]{\left[\begin{smallmatrix}#1\\#2\end{smallmatrix}\right]}
\newcommand{\spvek}[2]{\left(\begin{smallmatrix}#1\\#2\end{smallmatrix}\right)}

\begin{document}

\title{Prescribed-performance position tracking for infinite-dimensional passive systems}
\runningtitle{Position tracking for infinite-dimensional passive systems}
\thanks{Corresponding author: Timo Reis.}
\thanks{This work was supported by the FRS-FNRS (Belgium), grant number CR 40010909, and by the Deutsche Forschungs\-gemeinschaft (DFG, German Research Foundation), project number 362536361, \emph{Adaptive control of coupled rigid and flexible multibody systems with port-Hamiltonian structure}.}

\author{Anthony Hastir}
\address{Department of Mathematics and Namur Institute for Complex Systems (naXys), University of Namur, Rue de Bruxelles 61, 5000 Namur, Belgium; \email{anthony.hastir@unamur.be}}
\author{Timo Reis}
\address{Institut f\"ur Mathematik, Technische Universit\"at Ilmenau, Weimarer Stra\ss e 25, 98693 Ilmenau, Germany; \email{timo.reis@\allowbreak tu-ilmenau.de}}
\runningauthors{A. Hastir and T. Reis}

\begin{abstract}
We study funnel control for linear infinite-dimensional passive systems whose controlled output is obtained by integrating the passive output. In mechanical applications, this corresponds to position control based on a measured passive velocity output. Within the system-node framework, we propose a nonlinear recursive funnel controller with a dynamic gain supervisor for systems with distributed or boundary control and observation. The supervisor automatically increases a damping gain whenever the normalized recursive error approaches a prescribed monitoring level. The controller uses the tracking error and its derivative, with the latter obtained directly from the passive output rather than by numerical differentiation. We prove global existence and uniqueness of the closed-loop trajectory, prescribed funnel performance, and global boundedness of the derivative tracking error and the passive output. Under an additional position-compatible strict dissipativity condition, the supervised gain is increased only finitely many times and eventually becomes constant. Consequently, the gain, the internal state, and the control input are globally bounded, and the recursive-error funnel margin is uniform in time. The results are illustrated by a boundary-actuated Euler--Bernoulli beam with uniformly positive distributed damping.
\end{abstract}

\subjclass{93C20, 93C25, 93C40, 93D15, 47H05}
\keywords{funnel control, prescribed performance, nonlinear control, position tracking, passive systems, infinite-dimensional systems}

\maketitle
\markboth{A. HASTIR AND T. REIS}{POSITION TRACKING FOR INFINITE-DIMENSIONAL PASSIVE SYSTEMS}

\section*{Introduction}

We consider position tracking for linear distributed-parameter systems arising,
in particular, from continuum mechanics. Our aim is to develop a theory for an
abstract class of infinite-dimensional systems that encompasses both boundary
and distributed actuation and observation. A characteristic feature of many
mechanical systems in this class is that the variable to be controlled is a
position or displacement, whereas the passive output is a velocity. Indeed,
force and velocity form a power-conjugate pair: their inner product equals the
instantaneous power supplied to the system. Position control by means of a
co-located force therefore naturally leads to a situation in which the
controlled output is the time integral of a passive output.

A typical example is a boundary-actuated Euler--Bernoulli beam. If a
transverse force $u$ is applied at the beam endpoint, the corresponding
co-located passive output is the endpoint velocity $v$. The controlled
quantity of interest, however, is often the endpoint displacement $y$, so that
$\dot y=v$. Hence, position control introduces an additional integration
between the passive velocity output and the controlled output. The same
force--velocity--position structure occurs for vibrating strings and rods,
Timoshenko beams, elastic plates, and networks of flexible mechanical
components.

Motivated by this observation, we consider an abstract class of
infinite-dimensional systems that includes the beam above and many other
distributed-parameter mechanical systems. The underlying velocity-output
system is represented by
\begin{equation}
\label{eq:VelocityNode}
\spvek{\dot{x}(t)}{v(t)}
=
\sbvek{A\&B}{[1mm]C\&D}
\spvek{x(t)}{u(t)},
\end{equation}
where $\X$ and $\U$ are Hilbert spaces, $x(t)\in\X$ is the state,
$u(t)\in\U$ is the input, and $v(t)\in\U$ is the co-located velocity
output.

The operator matrix in \eqref{eq:VelocityNode} is understood as a
\emph{system node}, an operator-theoretic formulation of a linear
input-state-output system allowing unbounded control and observation
operators. It covers both distributed and boundary control and observation. In the
boundary-control case, the boundary conditions and the associated
compatibility between the state $x$ and the input $u$ are encoded in the
domain of the system node.

We assume that the velocity-output system is \emph{impedance passive}. More
precisely, the state norm is chosen such that
$\frac12\|x(t)\|_\X^2$ represents the stored energy, and every sufficiently
regular trajectory satisfies
$\frac12\frac{\dd}{\dd t}\|x(t)\|_\X^2 \leq \langle u(t),v(t)\rangle_\U$.
Thus, the increase in stored energy cannot exceed the power supplied through
the input-output pair $(u,v)$. Equality corresponds to an energy-preserving
system, whereas the inequality allows for internal dissipation. The
Euler--Bernoulli beam above satisfies this passivity relation.

The controlled position output $y(t)\in\U$ is related to the passive velocity
output by
$\dot y(t)=v(t)$.
Hence, we consider impedance-passive infinite-dimensional systems in which
the passive output is integrated once to obtain the controlled output.

Our objective is to make the position output $y$ track a prescribed reference
signal $y_{\rm ref}$. With the tracking error
$e:=y-y_{\rm ref}$,
we have
$\dot e=v-\dot y_{\rm ref}$.
The controller is formulated in terms of $e$ and $\dot e$.

We employ funnel control to prescribe the transient and asymptotic behaviour
of the position tracking error. For a prescribed positive function
$\varphi:\R_{\geq0}\to\R_{>0}$, the control objective is
$\varphi(t)\|e(t)\|_\U<1$ for all $t\geq0$.
Thus, $1/\varphi(t)$ specifies a time-varying error bound, which may allow a
large initial error while enforcing higher accuracy after the transient
phase. The funnel function need not be monotone.

To account for the additional integration between the passive output and the
controlled output, we introduce a recursive error and a supervised feedback
controller. Choose the design parameters
\[
\alpha,\beta,\delta,\tau_{\rm f},\Delta k,\nu>0,
\qquad
r_{\rm a}\in(0,1),
\qquad
k_{{\rm a},0}\geq0,
\]
and consider the controller
\begin{subequations}
\label{eq:ClosedLoopControlVelocity}
\begin{align}
e_{\rm r}(t)
&:=
\dot e(t)
+
\left(
\frac{\alpha}
{1-\varphi(t)^2\|e(t)\|_\U^2}
+
\frac{\dot\varphi(t)}{\varphi(t)}
+
\delta
\right)e(t),
\label{eq:RecursiveErrorIntro}
\\
\eta(t)
&:=
\tau_{\rm f}\varphi(t)e_{\rm r}(t),
\\
u(t)
&=
u_{\ext}(t)
-
k_{\rm a}(t)e_{\rm r}(t)
-
\frac{\beta e_{\rm r}(t)}
{1-\|\eta(t)\|_\U^2},
\label{eq:IntroController}
\\
\dot k_{\rm a}(t)
&=
\nu\sigma(t),
\qquad
k_{\rm a}(0)=k_{{\rm a},0},
\qquad
\sigma(t)\in\{0,1\}.
\end{align}
\end{subequations}
Here, $u_{\ext}:\R_{\geq0}\to\U$ is a prescribed additive input and $\sigma$ is generated by a supervisor with two modes.
In the holding mode, $\sigma=0$ and the gain remains constant. When
the normalized recursive error reaches the monitoring level
$r_{\rm a}$, the supervisor switches to the gain-increase mode
$\sigma=1$ for the fixed duration
\[
T_{\rm inc}:=\frac{\Delta k}{\nu},
\]
so that $k_{\rm a}$ is increased by precisely $\Delta k$.
The supervisor then returns to the holding mode. The switching times
and the recursive concatenation of the two fixed-mode trajectories
into a single supervised closed-loop trajectory are defined in
Subsection~\ref{subsec:SupervisedGain}. Thus, the closed loop combines
continuous system and controller dynamics with a discrete switching
mechanism for the gain.

As the position error approaches the funnel boundary, the coefficient
multiplying $e$ in \eqref{eq:RecursiveErrorIntro} becomes large. Keeping
$e_{\rm r}$ bounded therefore forces $\dot e$ to develop an inward-pointing
component and drives the position error away from the boundary. The additive
input $u_{\ext}$ may represent a nominal feedforward input or compensate for
known forcing; it also provides the freedom needed to satisfy the initial
compatibility condition.

The singular term in \eqref{eq:IntroController} is used to keep the
recursive error within
$\tau_{\rm f}\varphi(t)\|e_{\rm r}(t)\|_\U<1$.
This auxiliary condition is not an additional tracking objective. Rather,
together with the position-funnel condition, it controls the derivative of
the tracking error, since
\[
\dot e(t)
=
e_{\rm r}(t)
-
\left(
\frac{\alpha}
{1-\varphi(t)^2\|e(t)\|_\U^2}
+
\frac{\dot\varphi(t)}{\varphi(t)}
+
\delta
\right)e(t).
\]
The term $\dot\varphi/\varphi$ accounts for the motion of the
position-funnel boundary, whereas $\delta e$ provides an additional decay
margin in the prescribed error dynamics.

The supervised linear term $-k_{\rm a}e_{\rm r}$ serves a different
purpose. It supplies an adjustable amount of additional damping without requiring
knowledge of a system-dependent gain threshold. The singular feedback
guarantees funnel feasibility, while the supervisor increases the linear gain
only when the normalized recursive error approaches the prescribed monitoring
level. Under an additional strict dissipativity condition, only finitely many
such increases are needed.

The controller is inspired by the recursive funnel-control construction for
systems with known strict relative degree developed in \cite{Ber2018}, but its
specific form is modified here to make the infinite-dimensional closed-loop
analysis possible. The supervised gain preserves the model-free character of
the controller: no parameters or operators of the underlying system enter the
feedback law or the switching rule. The resulting feedback interconnection is
shown schematically in Figure~\ref{fig:blockdiag}.

\tikzstyle{block} = [draw, thick, fill=white, rectangle,
    minimum height=3em, minimum width=1em]
\tikzstyle{sum} = [draw, thick, fill=white, circle, node distance=1cm]
\tikzstyle{input} = [coordinate]
\tikzstyle{output} = [coordinate]

\begin{figure}[ht]
    \centering
\begin{tikzpicture}[auto, node distance=2cm,>=latex',thick]

    \node [input, name=input] {};
    \node [sum, right of=input, xshift=0em] (sum2) {};
    \node [block, right of=sum2, xshift=5em] (system) {System};
    \node [output, right of=system,xshift=11em] (output) {};

    \node at ([yshift=-0.1em,xshift=1em]sum2.south) {$+$};
    \node at ([yshift=-0.1em,xshift=-1em]sum2.south) {$+$};

    \draw [->] (sum2) -- node[name=u] {$u$} (system);
    \draw [->] (system) --
        node [name=y] {$\bigl(y,v\bigr)$} (output);

    \node[sum,below of=y,yshift=-2.5em] (sum3) {};
    \node[input,right of=sum3] (ref) {};

    \draw[->] (y) -- (sum3);
    \draw[->] (ref) -- (sum3);

    \node at ([yshift=4,xshift=30]sum3.north)
        {$\bigl(y_{\rm ref},\dot y_{\rm ref}\bigr)$};
    \node at ([yshift=-1,xshift=8]sum3.south) {$-$};
    \node at ([yshift=1,xshift=-8]sum3.north) {$+$};

    \node [block, left of=sum3, xshift=-4em] (controller2)
        {Supervised funnel controller};

    \draw [->] (sum3) --
        node {$\bigl(e,\dot e\bigr)$} (controller2);

    \draw [->] (controller2) -- ++(-2.5,0) -| (sum2.south);

    \draw [->] (input) --
        node[right,xshift=-1em,yshift=0.5em]
        {$u_{\mathrm{ext}}$} (sum2);

\end{tikzpicture}
\caption{Feedback interconnection of the supervised recursive funnel
controller and the system with additive external input $u_{\ext}$.}
\label{fig:blockdiag}
\end{figure}

Funnel control was introduced in \cite{IlcRyaSan2002} and has since been
developed for broad classes of finite-dimensional systems. Extensions include
multi-input multi-output systems \cite{IlcRyaTre2005}, nonlinear systems with
strict relative degree \cite{Ber2018}, and systems subject to input
constraints \cite{Ber2024}. Applications range from chemical reactors
\cite{IlcTre2004} and wind turbines \cite{Hac2014} to robotic manipulators
\cite{HacKen2012} and electrical circuits \cite{BerRei2014}. For surveys on
the theory and applications of funnel control, we refer to
\cite{IlchmannRyan2008,BerIlcRya2021}.

For infinite-dimensional systems, funnel control has so far been established
only for particular system classes. Early approaches transferred
finite-dimensional Byrnes--Isidori techniques to infinite-dimensional
systems \cite{IlchmannSeligTrunk2016}. Related results, including systems with
moderate nonlinearities, were developed in \cite{HasWinDoc2023}. These
approaches impose restrictive assumptions on the control and observation
operators and generally do not include boundary control. Nevertheless, they
cover relevant examples such as the moving-water-tank system studied in
\cite{Ber2020,Ber2022}.

Funnel control for boundary control systems was first considered for the heat
equation in \cite{ReiSel2015}. A broader class of passive boundary control
systems was studied in \cite{MarTimFel2021} using maximally monotone operator
methods. This approach was subsequently applied to the nonlinear monodomain
equation with boundary control in
\cite{BergerBreitenPucheReis2021}. More recently, the system-node framework
has been used to develop a unified theory for funnel control of
impedance-passive infinite-dimensional systems with both boundary and
distributed control and observation \cite{GovHasPauRei2025}.

The existing passivity-based results for infinite-dimensional systems concern
the case in which the controlled output is itself power conjugate to the
input. In mechanical systems, this typically means that a force input is used
to control a velocity output. These results do not apply directly to position
tracking, since the controlled position output is the integral of the passive
velocity output and therefore does not form an impedance-passive pair with the
input.

The controller \eqref{eq:IntroController} retains the connection to the
passive output through its explicit use of
$\dot e = v-\dot y_{\rm ref}$.
At the same time, the position error enters the recursively defined variable
$e_{\rm r}$ through the singular position-error feedback in
\eqref{eq:RecursiveErrorIntro}. The resulting closed-loop dynamics must
therefore be analysed on a product space containing both the system state and
the position error. The supervised gain introduces an additional scalar
controller state and a switching rule, but does not alter the principal
passive system-node relation.

The main control-theoretic contribution of this work is the development of a
nonlinear recursive funnel controller for position tracking in
impedance-passive infinite-dimensional systems, including systems with unbounded boundary
control and observation operators. The feedback law uses only the position
tracking error and its derivative, where the latter is obtained directly from
the passive velocity output. In particular, no numerical differentiation of
the position signal and no system parameters are required for its
implementation. The dynamic supervisor provides additional damping whenever
the normalized recursive error approaches a prescribed monitoring level,
without requiring knowledge of a system-dependent gain threshold.

Under impedance passivity and surjectivity of the observation operator, we
prove that the resulting closed loop has a unique global trajectory and
satisfies the prescribed position-funnel constraint with a uniform margin.
The recursive error remains strictly inside its funnel on every finite time
interval, and the recursive error, the derivative tracking error, and the
passive velocity output are globally bounded. These conclusions apply to both
distributed and boundary control and observation within the system-node
framework.

Under an additional position-compatible strict dissipativity condition, the
supervisor is activated only finitely many times and the supervised gain
eventually becomes constant. Consequently, the gain, the internal state, and
the control input are globally bounded, and the recursive-error funnel margin
is uniform in time. The required gain level enters only the analysis and is
not needed for the implementation of the controller. We verify the abstract
conditions for a boundary-actuated Euler--Bernoulli beam, where uniformly
positive distributed damping yields the required strict dissipativity
property.

The paper is organised as follows. Section~\ref{sec:prelim} recalls the
required facts on system nodes and introduces impedance-passive systems with
integrated position output. Section~\ref{sec:ass} collects the assumptions on
the system, the prescribed closed-loop data, the supervisor, and the initial
values. In Section~\ref{sec:main}, the closed-loop system is formulated as a
nonlinear evolution equation, and global existence and funnel performance are
established using maximal monotone operator methods and a recursive
construction across the switching times.
Section~\ref{sec:GlobalBoundedness} introduces an additional
position-compatible strict dissipativity condition and proves that the
supervisor increases the gain only finitely many times. This yields global
bounds for the supervised gain, the internal state, and the control input.
Section~\ref{sec:ex} illustrates the results for a boundary-actuated
Euler--Bernoulli beam.
\subsection*{Notation and basic operator-theoretic notions}

All spaces considered in this article are real Hilbert spaces. The norm and
inner product on a Hilbert space $\X$ are denoted by
$\|\cdot\|_\X$ and $\scprod{\cdot}{\cdot}_{\X}$, respectively; the
subscript is omitted whenever no ambiguity arises. For Hilbert spaces $\X$
and $\Y$, let $\Lb(\X,\Y)$ denote the space of bounded linear operators from
$\X$ to $\Y$, and set $\Lb(\X):=\Lb(\X,\X)$. The identity operator on $\X$
is denoted by $I_{\X}$, with the subscript omitted when clear. For
$x\in\X$ and $r>0$, let $\mathcal B_r^{\X}(x)$ denote the open ball of
radius $r$ centered at $x$; again, the superscript is omitted when clear.

For a linear operator
$A:\dom(A)\subset\X\to\Y$, the symbols $\dom(A)$, $\im A$, and $\ker A$
denote its domain, range, and kernel. The operator is called \emph{closed} if
its graph is closed in $\X\times\Y$, and \emph{densely defined} if
$\dom(A)$ is dense in $\X$. For a closed operator, $\dom(A)$ is understood
to be equipped with its graph norm.

We use the standard conventions for Lebesgue and Sobolev spaces from
\cite{AdamsFournier2003}. For spaces of functions with values in a Hilbert
space $\X$, the target space is indicated after the domain; for instance,
$\Lp{p}(\Omega;\X)$ denotes the space of $p$-integrable $\X$-valued
functions on $\Omega\subset\R^d$. Integration of Hilbert-space-valued
functions is understood in the Bochner sense; see \cite{DiestelUhl1977}.

\begin{dfntnintro}[Monotone operators]
Let $\mathcal H$ be a real Hilbert space and
$\mathcal M:\dom(\mathcal M)\subset\mathcal H\to\mathcal H$ a (not necessarily linear) operator.
It is called \emph{monotone} if, for all
$w_1,w_2\in\dom(\mathcal M)$,
\[
\scprod{\mathcal Mw_1-\mathcal Mw_2}{w_1-w_2}_{\mathcal H}
\geq0.
\]
A monotone operator $\mathcal M$ is called \emph{maximally monotone} if there exists some $\lambda>0$ with
$\im(\lambda I_{\mathcal H}+\mathcal M)=\mathcal H$.
\end{dfntnintro}

Maximal monotonicity is conventionally defined by requiring that the graph of
$\mathcal M$ admits no proper extension to a monotone relation in
$\mathcal H\times\mathcal H$. By Minty's theorem
\cite[Thm.~2.2]{Barbu2010}, for a monotone operator this is equivalent to the
range condition used above. Moreover, if the range condition holds for one
$\lambda>0$, then it holds for every $\lambda>0$.

\section{Impedance-passive system nodes}
\label{sec:prelim}

The systems considered below are described by system nodes with state space
$\X$, input space $\U$, and output space $\Y$. We write such a system node as
\begin{equation}
\label{eq:sysnode}
S
=
\sbvek{A\&B}{C\&D}
\colon
\dom(S)\subset\X\times\U
\longrightarrow
\X\times\Y.
\end{equation}
In the position-control setting considered here, $\Y=\U$, and the
system-node output $v$ is the passive velocity output. The controlled
position output $y$ is defined separately by $\dot y=v$.
We use the notion of a system node from
\cite[Def.~4.7.2]{sta2005}.

\begin{dfntn}[System node]
Let $\X$, $\U$, and $\Y$ be Hilbert spaces. A system node with state
space $\X$, input space $\U$, and output space $\Y$ is a linear
operator $S$ as in \eqref{eq:sysnode} such that:
\begin{enumerate}[label=(\alph{*})]
\item
$S$ is closed;
\item
the first component operator
$A\&B:\dom(S)\subset\X\times\U\to\X$
is closed;
\item
for every $u\in\U$, there exists some $x\in\X$ such that
$\spvek{x}{u}\in\dom(S)$;
\item
the main operator
\[
\dom(A)
:=
\setdef{x\in\X}{\spvek{x}{0}\in\dom(S)},
\qquad Ax
:=
A\&B\spvek{x}{0},
\quad x\in\dom(A),
\]
generates a strongly continuous semigroup on $\X$.
\end{enumerate}
\end{dfntn}

\begin{dfntn}[Strong trajectory]
\label{def:StrongTrajectory}
Let $S$ in \eqref{eq:sysnode} be a system node on $(\X,\U,\U)$, and
let $\mathcal I\subset\R_{\geq0}$ be an interval. A triple $(x,u,v)$
with
$x\in\Wkp{1,\infty}_{\loc}(\mathcal I;\X)$,
$u,v\in\Lp{\infty}_{\loc}(\mathcal I;\U)$
is called a \emph{strong trajectory} of the system node $S$ on
$\mathcal I$ if
$\spvek{x(t)}{u(t)}\in\dom(S)$
and \eqref{eq:VelocityNode} holds
for almost every $t\in\mathcal I$.
\end{dfntn}

\begin{rmrk}
\label{rem:StrongTrajectory}
The notion of a strong trajectory used here is intermediate between the
classical and generalized trajectories considered in
\cite[Sec.~4.3]{sta2005}. Every classical trajectory is a strong trajectory.
Conversely, every strong trajectory is a generalized trajectory, that is, it
can be approximated by classical trajectories in the topology used in
\cite{sta2005}. The first assertion follows directly from the definitions,
whereas the second follows by a standard time-mollification argument,
combining \cite[Thm.~4.3.9]{sta2005} with the argument in
\cite[Lem.~A.5]{Reis2026_Dissip}.
\end{rmrk}
For a system node, the operator $C\&D:\dom(S)\to\Y$
is bounded when $\dom(S)$ is endowed with the graph norm of $A\&B$;
see \cite[Lem.~4.7.3]{sta2005}. In particular, the observation operator
$C:\dom(A)\to\Y$,
$Cx:=C\&D\spvek{x}{0}$,
is bounded when $\dom(A)$ is endowed with the graph norm of $A$.

The following elementary consequence of the Hilbert space structure
will be used to construct a bounded lifting of the output.

\begin{prpstn}[Right inverse of the observation operator]
\label{prop:ObservationRightInverse}
Let $S$ be a system node on $(\X,\U,\Y)$. Then the observation operator
$C:\dom(A)\to\Y$
is surjective if and only if it admits a bounded right inverse, that is,
if and only if there exists
$Q\in\Lb(\Y,\dom(A))$
such that
$CQ=I_\Y$.
In this case,
$Q\in\Lb(\Y,\X)$ and $AQ\in\Lb(\Y,\X)$.
\end{prpstn}
\begin{proof}
The first assertion follows from \cite[Prop.~9]{GovHasPauRei2025}. Since the
canonical embedding $\dom(A)\hookrightarrow\X$ and
$A:\dom(A)\to\X$ are bounded, the remaining assertions follow from
$Q\in\Lb(\Y,\dom(A))$.
\end{proof}
Let $\X_{-1}$ be the extrapolation space associated with $A$, and let
$A_{-1}\in\Lb(\X,\X_{-1})$ denote the extrapolation of $A$. Then there
exists a unique control operator $B\in\Lb(\U,\X_{-1})$ such that
\begin{equation}
\label{eq:ABform}
\dom(S)
=
\setdef{\spvek{x}{u}\in\X\times\U}
{A_{-1}x+Bu\in\X},\qquad
A\&B\spvek{x}{u}
=
A_{-1}x+Bu;
\end{equation}
see \cite[Lem.~4.7.3]{sta2005}.

We now specialize to the passive system class considered in this
article.

\begin{dfntn}[Impedance passivity]
\label{def:passive}
A system node
$S=\sbvek{A\&B}{C\&D}$ on $(\X,\U,\U)$ is called \emph{impedance passive} if, for every
$\spvek{x}{u}\in\dom(S)$,
\begin{equation}
\label{eq:KYP}
\scprod{
A\&B\spvek{x}{u}
}{x}_\X
\leq
\scprod{
C\&D\spvek{x}{u}
}{u}_\U.
\end{equation}
\end{dfntn}
\begin{rmrk}
\label{rem:PositiveResolvent}
Setting $u=0$ in \eqref{eq:KYP} gives
$\scprod{Ax}{x}_\X\leq0$
for all $x\in\dom(A)$.
Since $A$ generates a strongly continuous semigroup,
$\mu I-A$ is surjective for all sufficiently large $\mu>0$.
Together with the dissipativity of $A$, the Lumer--Phillips theorem
therefore shows that $A$ is maximally dissipative and generates a
contraction semigroup. Consequently, $\mu I-A$ is boundedly
invertible for every $\mu>0$, and
$\|(\mu I-A)^{-1}\|_{\Lb(\X)} \leq \tfrac{1}{\mu}$;
see \cite[Thm.~II.3.15]{EngelNagel2000}.
\end{rmrk}

For every $\mu>0$, the corresponding transfer operator is defined by
\[
P(\mu)u
:=
C\&D
\spvek{
\displaystyle(\mu I-A_{-1})^{-1}Bu
}{
\displaystyle u
},
\qquad
u\in\U.
\]
By \cite[Lem.~4.7.3]{sta2005}, the mapping
$u \longmapsto \spvek{(\mu I-A_{-1})^{-1}Bu}{u}$
is bounded from $\U$ into $\dom(S)$ endowed with the graph norm.
Hence the graph-norm boundedness of $C\&D$ implies
$P(\mu)\in\Lb(\U)$. Impedance passivity implies, by \cite[Thm.~4.2]{Sta02},
$\scprod{P(\mu)u}{u}_\U\geq0$ for all $\mu>0$ and $u\in\U$.
If the observation operator $C:\dom(A)\to\U$ is surjective, then
this nonnegativity is strengthened to coercivity. More precisely,
for every $\mu>0$ there exists some $m_\mu>0$ such that, for every
$u\in\U$,
\begin{equation}
\label{eq:TransferFunctionCoercivity}
\scprod{P(\mu)u}{u}_\U
\geq
m_\mu\|u\|_\U^2.
\end{equation}
This result is proved in
\cite[Prop.~10]{GovHasPauRei2025} for complex Hilbert spaces. Applying
it to the complexification of the system node and restricting the
resulting estimate to the original real input space gives
\eqref{eq:TransferFunctionCoercivity}. In particular, the
Lax--Milgram theorem implies that $P(\mu)$ is boundedly invertible.
The following quantitative consequence will be needed in the
maximal-monotonicity argument.

\begin{lmm}[Quantitative transfer coercivity]
\label{lem:QuantitativeTransferCoercivity}
Suppose that $C:\dom(A)\to\U$ is surjective, and let $m_1>0$ be a
coercivity constant in \eqref{eq:TransferFunctionCoercivity}
corresponding to $\mu=1$. Then, for every $\lambda\geq1$ and
$u\in\U$,
\[
\scprod{P(\lambda)u}{u}_\U
\geq
\frac{m_1}{\lambda}\|u\|_\U^2.
\]
\end{lmm}
\begin{proof}
Consider the complexification of the system node and its transfer
function, again denoted by $P$, with the complex inner products
taken linear in the first argument. The complexified system node is
again impedance passive. For fixed
$u\in\U\setminus\{0\}$ from the original real input space, viewed
as an element of its complexification, define
$p_u(z) := \scprod{P(z)u}{u}_\U$, $\operatorname{Re}z>0$.
The scalar function $p_u$ is analytic on the right half-plane.
Impedance passivity gives
\[
\operatorname{Re}p_u(z)
\geq
\operatorname{Re}z\,
\left\|
(zI-A_{-1})^{-1}Bu
\right\|_\X^2
\geq0.
\]
By \eqref{eq:TransferFunctionCoercivity},
$p_u(1)>0$. Since $\operatorname{Re}p_u$ is a nonnegative harmonic
function on the right half-plane and
$\operatorname{Re}p_u(1)>0$, the Harnack estimate used
in the proof of \cite[Thm.~11.14]{Rudin1987} shows that
$\operatorname{Re}p_u$ cannot vanish at an interior point. Hence
\[
\operatorname{Re}p_u(z)>0
\qquad
\text{for }\operatorname{Re}z>0.
\]
Hence
$g_u(z):=\frac{1}{p_u(z)}$
also has positive real part.

Set
\[
\phi(z):=\frac{z-1}{z+1},
\qquad
\psi(w):=\frac{w-g_u(1)}{w+g_u(1)}.
\]
Since $g_u(1)>0$, both $\phi$ and $\psi$ map the right half-plane
conformally onto the unit disc, with $\phi(1)=0$ and
$\psi(g_u(1))=0$. Hence
$F:=\psi\circ g_u\circ\phi^{-1}$
is a holomorphic self-map of the unit disc satisfying $F(0)=0$.
The Schwarz lemma \cite[Thm.~12.2]{Rudin1987} therefore yields, for
$\lambda\geq1$,
\[
\left|
\frac{g_u(\lambda)-g_u(1)}
{g_u(\lambda)+g_u(1)}
\right|
\leq
\frac{\lambda-1}{\lambda+1}.
\]
For real $\lambda>0$, the complexified transfer operator maps the
original real input space into itself. Hence $p_u(1)$ and
$p_u(\lambda)$, and therefore also $g_u(1)$ and $g_u(\lambda)$, are
positive real numbers. It follows that
$g_u(\lambda)\leq\lambda g_u(1)$. Consequently,
\[
p_u(\lambda)
\geq
\frac{p_u(1)}{\lambda}
\geq
\frac{m_1}{\lambda}\|u\|_\U^2.\qedhere
\]
\end{proof}

\section{Assumptions}
\label{sec:ass}

We now state the assumptions on the system, the prescribed closed-loop
data, the supervised gain, and the initial values.

\begin{assmptn}[System class]
\label{assum:SystemClass}
The system node
$S=\sbvek{A\&B}{C\&D}$
on $(\X,\U,\U)$ is impedance passive. Moreover, the observation
operator $C:\dom(A)\to\U$ is surjective.
\end{assmptn}

Let
$y_{\rm ref}:\R_{\geq0}\to\U$
be a prescribed reference signal and let
$u_{\ext}:\R_{\geq0}\to\U$
be a prescribed external input. The position tracking error and its
derivative are given by
$e=y-y_{\rm ref}$, $\dot e=v-\dot y_{\rm ref}$.

\begin{assmptn}[Closed-loop data]
\label{assum:FunnelData}
The following conditions are satisfied:
\begin{enumerate}[label=(\alph{*})]
\item
The reference signal satisfies
$y_{\rm ref} \in \Wkp{3,\infty}(\R_{\geq0};\U)$.
\item
The external input satisfies
$u_{\ext} \in \Wkp{2,\infty}(\R_{\geq0};\U)$.
\item
The funnel function satisfies
$\varphi \in \Wkp{3,\infty}(\R_{\geq0})$, $\inf_{t\geq0}\varphi(t)>0$.
\item
The controller parameters satisfy
$\alpha,\beta,\delta,\tau_{\rm f}>0$.
\end{enumerate}
\end{assmptn}

No monotonicity of $\varphi$ is required. The reciprocal
$1/\varphi$ determines the prescribed boundary for the position
tracking error. No independent second funnel function and no
compatibility condition between two funnel functions are imposed.
Instead, the recursively defined error is constrained by the scaled
funnel function $\tau_{\rm f}\varphi$.

\begin{assmptn}[Supervisor parameters]
\label{assum:SupervisorData}
The initial gain and the supervisor parameters satisfy
\[
k_{{\rm a},0}\geq0,
\qquad
r_{\rm a}\in(0,1),
\qquad
\Delta k>0,
\qquad
\nu>0.
\]
\end{assmptn}

Here, $r_{\rm a}$ is the monitoring level for the normalized
recursive error, $\Delta k$ is the amount by which the gain is
increased during one activation of the supervisor, and $\nu$ is the
corresponding rate of increase. Thus, every gain-increase phase has
the fixed duration
$T_{\rm inc} := \frac{\Delta k}{\nu}$.
The precise recursive definition of the gain and its switching times
is given in Subsection~\ref{subsec:SupervisedGain}.

\begin{assmptn}[Initial values]
\label{assum:InitialValues}
Let $x_0\in\X$ and $y_0\in\U$. The initial position error lies inside
the prescribed funnel, that is,
$\varphi(0) \|y_0-y_{\rm ref}(0)\|_\U <1$.
Further, there exists an element $u_0\in\U$ such that
$\spvek{x_0}{u_0}\in\dom(S)$.
For this element, set
\[
e_{\rm r}(0)
:={}
C\&D\spvek{x_0}{u_0}
-
\dot y_{\rm ref}(0)
+
\left(
\frac{\alpha}
{1-\varphi(0)^2
\|y_0-y_{\rm ref}(0)\|_\U^2}
+
\frac{\dot\varphi(0)}{\varphi(0)}
+
\delta
\right)
\bigl(
y_0-y_{\rm ref}(0)
\bigr).
\]
The initial normalized recursive error lies below the monitoring
level, that is,
$\tau_{\rm f}\varphi(0) \|e_{\rm r}(0)\|_\U < r_{\rm a}$.
Moreover, the initial external input is compatible with the feedback
law:
\begin{equation}
\label{eq:InitialControllerCompatibility}
u_{\ext}(0)
=
u_0
+
k_{{\rm a},0}e_{\rm r}(0)
+
\frac{\beta e_{\rm r}(0)}
{1-\tau_{\rm f}^2\varphi(0)^2
\|e_{\rm r}(0)\|_\U^2}.
\end{equation}
\end{assmptn}

Condition
$\spvek{x_0}{u_0}\in\dom(S)$
is the compatibility condition between the initial state and the
initial system input. The condition
$\varphi(0) \|y_0-y_{\rm ref}(0)\|_\U <1$
places the initial position error inside its funnel, while
$\tau_{\rm f}\varphi(0) \|e_{\rm r}(0)\|_\U < r_{\rm a} <1$
places the normalized recursive error inside its funnel and below the
initial monitoring level. Finally,
\eqref{eq:InitialControllerCompatibility} requires the feedback law
to hold at the initial time. If $u_{\ext}(0)$ may be chosen freely,
this condition determines its value for the selected admissible
element $u_0$.

\section{Global existence and funnel performance}
\label{sec:main}

In this section, we establish existence and uniqueness of the global
closed-loop trajectory and prove the prescribed funnel properties. We first
analyse the continuous closed-loop dynamics for each of the two supervisor
modes. For this purpose, the supervised gain is included as an additional
scalar state. A state transformation then leads to a nonlinear evolution
equation on an extended product space with a time-independent principal
system-node relation. After proving global solvability in each mode, the
corresponding trajectories are joined recursively at the switching times.

For brevity, set
$q := \frac{\dot\varphi}{\varphi} + \delta$.
The closed-loop system is described by
\eqref{eq:VelocityNode}, the relations
$\dot y=v$, $e=y-y_{\rm ref}$,
and the controller
\eqref{eq:ClosedLoopControlVelocity}. For strong trajectories,
$\dot e=v-\dot y_{\rm ref}$.

We introduce the normalized errors
$\chi(t)
:=
\varphi(t)e(t)$,
$\eta(t)
:=
\tau_{\rm f}\varphi(t)e_{\rm r}(t)$.
The position-funnel condition and the recursive-error condition are
equivalent to
$\|\chi(t)\|_\U<1$, $\|\eta(t)\|_\U<1$,
respectively. On the open unit ball of $\U$, define the barrier map
\begin{equation}
\label{eq:BarrierMap}
\gamma:\mathcal B_1^\U(0)\to\U,
\qquad
\gamma(\xi)
:=
\frac{\xi}{1-\|\xi\|_\U^2}.
\end{equation}
The controller can then be written as
\begin{equation}
\label{eq:NormalizedController}
\tau_{\rm f}\varphi(t)
\bigl(
u(t)-u_{\ext}(t)
\bigr)
=
-k_{\rm a}(t)\eta(t)
-\beta\gamma\bigl(\eta(t)\bigr).
\end{equation}
Moreover, the definition of $e_{\rm r}$ yields
\begin{equation}
\label{eq:NormalizedVelocityRelation}
\tau_{\rm f}\varphi(t)
\bigl(
v(t)-\dot y_{\rm ref}(t)
\bigr)
=
\eta(t)
-
\tau_{\rm f}\alpha\gamma\bigl(\chi(t)\bigr)
-
\tau_{\rm f}q(t)\chi(t).
\end{equation}
Differentiating $\chi=\varphi e$ gives
\begin{equation}
\label{eq:ChiDynamics}
\dot\chi(t)
=
\tau_{\rm f}^{-1}\eta(t)
-
\alpha\gamma\bigl(\chi(t)\bigr)
-
\delta\chi(t).
\end{equation}
For the analysis, we first fix
$\sigma\in\{0,1\}$ and study the corresponding continuous closed-loop
dynamics separately. The gain then satisfies
$\dot k_{\rm a}(t)=\nu\sigma$, where $\sigma=0$ denotes the holding
mode and $\sigma=1$ the gain-increase mode. After global trajectories
have been established for both fixed modes, they are concatenated
according to the switching construction in
Subsection~\ref{subsec:SupervisedGain}.

\subsection{Auxiliary estimates and lifting operators}

We first recall the properties of the barrier map
\eqref{eq:BarrierMap} that will be used below.

\begin{lmm}[Properties of the barrier map]
\label{lem:StrongMonotonicityGamma}
The map $\gamma$ in \eqref{eq:BarrierMap} is bijective and maximally
monotone. Moreover,
for all
$\xi_1,\xi_2\in\mathcal B_1^\U(0)$,
\begin{equation}
\label{eq:StrongMonotonicityGamma}
\scprod{
\gamma(\xi_1)-\gamma(\xi_2)
}{
\xi_1-\xi_2
}_\U
\geq
\|\xi_1-\xi_2\|_\U^2.
\end{equation}
In particular,
$\gamma^{-1}:\U\to\mathcal B_1^\U(0)$
is globally Lipschitz continuous with Lipschitz constant one.
\end{lmm}

The proof is given in Appendix~\ref{app:AuxiliaryResults}.

\begin{rmrk}
\label{rem:UniqueInitialInput}
The element $u_0$ in
Assumption~\ref{assum:InitialValues} is unique.

Indeed, suppose that $u_{0,1},u_{0,2}\in\U$ both satisfy the
conditions in Assumption~\ref{assum:InitialValues}. For $j=1,2$, set
$v_{0,j} := C\&D\spvek{x_0}{u_{0,j}}$
and
\[
e_{{\rm r},0,j}
:={}
v_{0,j}
-
\dot y_{\rm ref}(0)
+
\left(
\frac{\alpha}
{1-\varphi(0)^2
\|y_0-y_{\rm ref}(0)\|_\U^2}
+
q(0)
\right)
\bigl(
y_0-y_{\rm ref}(0)
\bigr).
\]
Define
$\eta_{0,j} := \tau_{\rm f}\varphi(0)e_{{\rm r},0,j}$.
Then
\[
\eta_{0,1}-\eta_{0,2}
=
\tau_{\rm f}\varphi(0)
\bigl(
v_{0,1}-v_{0,2}
\bigr).
\]
The initial controller compatibility condition gives
\[
\begin{aligned}
u_{0,1}-u_{0,2}
=
-\frac{1}
{\tau_{\rm f}\varphi(0)}
\biggl[
k_{{\rm a},0}
\bigl(
\eta_{0,1}-\eta_{0,2}
\bigr)
+
\beta
\bigl(
\gamma(\eta_{0,1})
-
\gamma(\eta_{0,2})
\bigr)
\biggr].
\end{aligned}
\]
Since $\dom(S)$ is a linear subspace, impedance passivity applied to
the difference of the corresponding system-node elements yields
\[
0
\leq
\scprod{
v_{0,1}-v_{0,2}
}{
u_{0,1}-u_{0,2}
}_\U
=
-\frac{1}
{\tau_{\rm f}^2\varphi(0)^2}
\Big(
k_{{\rm a},0}
\|\eta_{0,1}-\eta_{0,2}\|_\U^2
+
\beta
\scprod{
\eta_{0,1}-\eta_{0,2}
}{
\gamma(\eta_{0,1})
-
\gamma(\eta_{0,2})
}_\U
\Big).
\]
It follows from
\eqref{eq:StrongMonotonicityGamma} that
$\eta_{0,1}=\eta_{0,2}$.
Consequently,
$v_{0,1}=v_{0,2}$ and $u_{0,1}=u_{0,2}$.
\end{rmrk}

By Assumption~\ref{assum:SystemClass} and
Proposition~\ref{prop:ObservationRightInverse}, we fix a bounded
right inverse
$Q\in\Lb(\U,\dom(A))$, $CQ=I_\U$.
In particular,
$Q\in\Lb(\U,\X)$, $AQ\in\Lb(\U,\X)$.
By Remark~\ref{rem:PositiveResolvent}, we may fix some $\mu>0$.
We also fix a corresponding coercivity constant $m_\mu>0$ in
\eqref{eq:TransferFunctionCoercivity} and set
\begin{equation}
\begin{aligned}
R_\mu
&:=
(\mu I-A_{-1})^{-1}
\in\Lb(\X_{-1},\X),
\\
L_\mu
&:=
R_\mu B-QP(\mu)
\in\Lb(\U,\X),
\\
G_\mu
&:=
\mu R_\mu B-AQP(\mu)
\in\Lb(\U,\X).
\label{eq:GMuDefinition}
\end{aligned}
\end{equation}

\begin{lmm}[Zero-output lifting]
\label{lem:ZeroOutputLifting}
Suppose that Assumption~\ref{assum:SystemClass} holds, and let
$Q$, $\mu$, $R_\mu$, $L_\mu$, and $G_\mu$ be fixed as above.
Then, for every $p\in\U$,
\[
\spvek{L_\mu p}{p}
\in\dom(S),
\qquad
\sbvek{A\&B}{C\&D}
\spvek{L_\mu p}{p}
=
\spvek{G_\mu p}{0}.
\]
\end{lmm}
The proof is given in Appendix~\ref{app:AuxiliaryResults}.

\subsection{The transformed closed-loop system}

Besides the normalized errors $\chi$ and $\eta$, we introduce the
transformed state
\begin{equation}
\label{eq:TransformedState}
\widehat z(t)
:={}
\tau_{\rm f}\varphi(t)
\left(
x(t)
-
L_\mu u_{\ext}(t)
-
Q\dot y_{\rm ref}(t)
\right)
+
\tau_{\rm f}Qq(t)\chi(t)
+
\tau_{\rm f}\alpha Q\gamma\bigl(\chi(t)\bigr).
\end{equation}
By \eqref{eq:NormalizedController},
\begin{equation}
\label{eq:TransformationDomainIdentity}
\spvek{
\widehat z
}{
-k_{\rm a}\eta-\beta\gamma(\eta)
}
={}
\tau_{\rm f}\varphi
\spvek{x}{u}
-
\tau_{\rm f}\varphi
\spvek{L_\mu u_{\ext}}{u_{\ext}}
-
\tau_{\rm f}\varphi
\spvek{Q\dot y_{\rm ref}}{0}
+
\tau_{\rm f}
\spvek{Qq\chi}{0}
+
\tau_{\rm f}\alpha
\spvek{Q\gamma(\chi)}{0}.
\end{equation}
Lemma~\ref{lem:ZeroOutputLifting}, the identity $CQ=I_\U$, and
linearity of the system node show that the element on the left-hand
side belongs to $\dom(S)$ almost everywhere.

Applying the system node to
\eqref{eq:TransformationDomainIdentity} and using
\eqref{eq:NormalizedVelocityRelation} gives, for almost every
$t\geq0$ and some $\widehat f(t)\in\X$,
\begin{equation}
\label{eq:TransformedSystemNode}
\spvek{
\widehat f(t)
}{
\eta(t)
}
=
\sbvek{A\&B}{C\&D}
\spvek{
\widehat z(t)
}{
-k_{\rm a}(t)\eta(t)
-\beta\gamma\bigl(\eta(t)\bigr)
}.
\end{equation}

For the state equation, define
\[
\mathfrak d(t)
:={}
\tau_{\rm f}\varphi(t)
\bigl(
G_\mu u_{\ext}(t)
-
L_\mu\dot u_{\ext}(t)
+
AQ\dot y_{\rm ref}(t)
-
Q\ddot y_{\rm ref}(t)
\bigr).
\]
For $t\geq0$ and $\chi\in\mathcal B_1^\U(0)$, define
$K(t,\chi) := q(t)Q + \alpha Q\gamma'(\chi) \in\Lb(\U,\X)$.
Differentiating \eqref{eq:TransformedState} and using
\eqref{eq:ChiDynamics} and
\eqref{eq:TransformedSystemNode}, we obtain
\begin{equation}
\label{eq:TransformedClosedLoop}
\begin{aligned}
\dot{\widehat z}
={}&
\widehat f
+
K(t,\chi)\eta
+
(q-\delta)\widehat z
+
\mathfrak d
+
\tau_{\rm f}
\bigl(
\dot q-q^2
\bigr)Q\chi
-
\tau_{\rm f}qAQ\chi
\\
&-
\tau_{\rm f}\alpha
AQ\gamma(\chi)
-
\tau_{\rm f}\alpha
\bigl(
2q-\delta
\bigr)
Q\gamma(\chi)
-
\tau_{\rm f}\alpha^2
Q\gamma'(\chi)\gamma(\chi)
-
\tau_{\rm f}\alpha\delta
Q\gamma'(\chi)\chi,
\\
\dot\chi
={}&
\tau_{\rm f}^{-1}\eta
-
\alpha\gamma(\chi)
-
\delta\chi,
\\
\dot k_{\rm a}
={}&
\nu\sigma,
\\
\spvek{
\widehat f
}{
\eta
}
={}&
\sbvek{A\&B}{C\&D}
\spvek{
\widehat z
}{
-k_{\rm a}\eta-\beta\gamma(\eta)
}.
\end{aligned}
\end{equation}
Assumption~\ref{assum:FunnelData} implies
$\mathfrak d \in \Wkp{1,\infty}(\R_{\geq0};\X)$.
Moreover,
$q$ and
$\dot q-q^2$
are bounded and Lipschitz continuous. The principal system-node
relation in \eqref{eq:TransformedClosedLoop} contains no explicit
time dependence. The remaining time dependence occurs only through
bounded Lipschitz-continuous coefficients and forcing terms.

\begin{lmm}[Smooth truncation of the barrier map]
\label{lem:TruncatedBarrier}
For every $R\in(0,1)$, there exists a continuously Fr\'echet
differentiable map
$\Gamma_R:\U\to\U$
with the following properties:
\begin{enumerate}[label=(\alph{*})]
\item
$\Gamma_R(\xi)=\gamma(\xi)$ for every
$\xi\in\overline{\mathcal B_R^\U(0)}$;
\item
$\Gamma_R$ is globally Lipschitz continuous and, for all
$\xi_1,\xi_2\in\U$,
\[
\scprod{
\Gamma_R(\xi_1)-\Gamma_R(\xi_2)
}{
\xi_1-\xi_2
}_\U
\geq
\|\xi_1-\xi_2\|_\U^2;
\]
\item
$\Gamma_R':\U\to\Lb(\U)$ is globally bounded and globally
Lipschitz continuous;
\item
the mappings
$\xi\longmapsto\Gamma_R'(\xi)\Gamma_R(\xi)$,
$\xi\longmapsto\Gamma_R'(\xi)\xi$
are globally Lipschitz continuous.
\end{enumerate}
\end{lmm}
The proof is given in Appendix~\ref{app:AuxiliaryResults}.

\subsection{A product operator on the extended state space}

The transformation introduces the derivative $\gamma'(\chi)$. We
therefore use the differentiable truncation from
Lemma~\ref{lem:TruncatedBarrier}. For $R\in(0,1)$, let
$\Gamma_R:\U\to\U$
be the corresponding truncation, and set
$K_R(t,\chi) := q(t)Q + \alpha Q\Gamma_R'(\chi)$.

Define the extended Hilbert space
$\mathcal H_{\rm a} := \X\times\U\times\R$
with its natural inner product. To define the principal
operator on the entire real gain axis, set
$\kappa(k) := \max\{k,0\}$, $k\in\R$.
Along the actual closed-loop trajectory, the gain is nonnegative and
therefore $\kappa(k_{\rm a})=k_{\rm a}$.

For
$w=(\widehat z,\chi,k)\in\mathcal H_{\rm a}$,
$\widehat f\in\X$, and
$\eta\in\mathcal B_1^\U(0)$, consider the relation
\begin{equation}
\label{eq:ExtendedTruncatedSystemNodeRelation}
\spvek{
\widehat z
}{
-\kappa(k)\eta-\beta\gamma(\eta)
}
\in\dom(S),
\qquad
\spvek{
\widehat f
}{
\eta
}
=
\sbvek{A\&B}{C\&D}
\spvek{
\widehat z
}{
-\kappa(k)\eta-\beta\gamma(\eta)
}.
\end{equation}
Define the fixed set
\[
\mathcal D_{\rm a}
:=
\setdef*{
(\widehat z,\chi,k)\in\mathcal H_{\rm a}
}{
\begin{array}{l}
\text{there exist }\widehat f\in\X
\text{ and }\eta\in\mathcal B_1^\U(0)
\\
\text{satisfying
\eqref{eq:ExtendedTruncatedSystemNodeRelation}}
\end{array}
}.
\]
For $R\in(0,1)$ and $t\geq0$, define
\[
\mathcal A_{R,{\rm a}}(t):
\mathcal D_{\rm a}
\subset\mathcal H_{\rm a}
\to
\mathcal H_{\rm a}
\]
by
\[
\mathcal A_{R,{\rm a}}(t)
\begin{pmatrix}
\widehat z\\
\chi\\
k
\end{pmatrix}
:=
\begin{pmatrix}
-\widehat f-K_R(t,\chi)\eta\\
-\tau_{\rm f}^{-1}\eta\\
0
\end{pmatrix},
\]
where $\widehat f$ and $\eta$ satisfy
\eqref{eq:ExtendedTruncatedSystemNodeRelation}.
The domain $\mathcal D_{\rm a}$ contains neither $t$ nor $R$ and is
therefore common to all operators
$\mathcal A_{R,{\rm a}}(t)$.
\begin{prpstn}[Extended truncated product operator]
\label{prop:TruncatedProductOperator}
Suppose that Assumptions~\ref{assum:SystemClass} and
\ref{assum:FunnelData} hold, and let $R\in(0,1)$.
Then the elements $\widehat f$ and $\eta$ in
\eqref{eq:ExtendedTruncatedSystemNodeRelation}
are uniquely determined. Consequently,
$\mathcal A_{R,{\rm a}}(t)$ is single-valued for every
$t\geq0$.

Moreover, there exists a constant $\omega_R\geq0$, independent of
$t$, such that
$\mathcal M_{R,{\rm a}}(t)
:=
\mathcal A_{R,{\rm a}}(t)
+
\omega_RI_{\mathcal H_{\rm a}}$
is maximally monotone for every $t\geq0$. Its domain is the fixed set
$\mathcal D_{\rm a}$.
\end{prpstn}

The proof is given in Appendix~\ref{app:TruncatedProductOperator}.
The key point is to incorporate the gain-dependent
linear term into the feedback map associated with the normalized
recursive error. More precisely, we use
$\Phi_k:
\mathcal B_1^\U(0)\to\U$,
$\Phi_k(\eta)
:=
\kappa(k)\eta+\beta\gamma(\eta)$.
For every $k\in\R$, this map is bijective and satisfies
\[
\scprod{
\Phi_k(\eta_1)-\Phi_k(\eta_2)
}{
\eta_1-\eta_2
}_\U
\geq
\beta
\|\eta_1-\eta_2\|_\U^2.
\]
The mixed terms caused by different gain values are controlled by the
additional scalar component of the product-space norm. The resulting
hypomonotonicity constant is independent of the magnitude of $k$.

For
$w=(\widehat z,\chi,k)\in\mathcal H_{\rm a}$,
define
\[
\mathcal F_{R,{\rm a}}(t,w)
:=
\begin{pmatrix}
\mathcal F_{R,1}(t,\widehat z,\chi)\\
-\alpha\Gamma_R(\chi)-\delta\chi\\
0
\end{pmatrix},
\]
where
\begin{multline*}
\mathcal F_{R,1}(t,\widehat z,\chi)
:=
\bigl(q(t)-\delta\bigr)\widehat z
+
\tau_{\rm f}
\bigl(
\dot q(t)-q(t)^2
\bigr)Q\chi
-
\tau_{\rm f}q(t)AQ\chi
\\
-
\tau_{\rm f}\alpha AQ\Gamma_R(\chi)
-
\tau_{\rm f}\alpha
\bigl(
2q(t)-\delta
\bigr)
Q\Gamma_R(\chi)
-
\tau_{\rm f}\alpha^2
Q\Gamma_R'(\chi)\Gamma_R(\chi)
-
\tau_{\rm f}\alpha\delta
Q\Gamma_R'(\chi)\chi.
\end{multline*}
By Lemma~\ref{lem:TruncatedBarrier},
$\mathcal F_{R,{\rm a}}(t,\cdot)$ is globally Lipschitz continuous on
$\mathcal H_{\rm a}$, uniformly in $t\geq0$. Moreover,
$t\mapsto\mathcal F_{R,{\rm a}}(t,w)$ is Lipschitz continuous
uniformly for $w$ in bounded subsets of $\mathcal H_{\rm a}$.

For $\sigma\in\{0,1\}$, define
\[
g_\sigma(t)
:=
\begin{pmatrix}
\mathfrak d(t)\\
0\\
\nu\sigma
\end{pmatrix}.
\]

\begin{prpstn}[Truncated evolution in a fixed mode]
\label{prop:TruncatedWellPosedness}
Suppose that Assumptions~\ref{assum:SystemClass} and
\ref{assum:FunnelData} hold. Let
$R\in(0,1)$,
$0\leq t_0<T$,
$\sigma\in\{0,1\}$, and
$w_0\in\mathcal D_{\rm a}$.
Then the evolution equation
\begin{equation}
\label{eq:TruncatedEvolutionEquation}
\dot w_R(t)
+
\mathcal M_{R,{\rm a}}(t)w_R(t)
={}
\mathcal F_{R,{\rm a}}\bigl(t,w_R(t)\bigr)
+
\omega_Rw_R(t)
+
g_\sigma(t),
\qquad
w_R(t_0)
={}
w_0,
\end{equation}
has a unique solution
$w_R \in \Wkp{1,\infty} \bigl( [t_0,T]; \mathcal H_{\rm a} \bigr)$.
Moreover,
$w_R(t)\in\mathcal D_{\rm a}$ for almost every $t\in[t_0,T]$,
and
\[
\mathcal A_{R,{\rm a}}(\cdot)w_R(\cdot)
\in
\Lp{\infty}
\bigl(
[t_0,T];
\mathcal H_{\rm a}
\bigr).
\]
\end{prpstn}

The proof is given in
Appendix~\ref{app:TruncatedEvolutionEquation}. The maximal monotone
part is identical in both modes. The mode parameter occurs only in
the bounded forcing term $g_\sigma$.

Writing
$w_R=(\widehat z_R,\chi_R,k_R)$,
let $\widehat f_R$ and $\eta_R$ denote the uniquely determined
elements in
\eqref{eq:ExtendedTruncatedSystemNodeRelation}. Expanding
\eqref{eq:TruncatedEvolutionEquation} gives
\begin{equation}
\label{eq:ExpandedExtendedTruncatedSystem}
\begin{aligned}
\dot{\widehat z}_R
&=
\widehat f_R
+
K_R(t,\chi_R)\eta_R
+
\mathcal F_{R,1}
\bigl(
t,\widehat z_R,\chi_R
\bigr)
+
\mathfrak d,
\\
\dot\chi_R
&=
\tau_{\rm f}^{-1}\eta_R
-
\alpha\Gamma_R(\chi_R)
-
\delta\chi_R,
\\
\dot k_R
&=
\nu\sigma,
\\
\spvek{
\widehat f_R
}{
\eta_R
}
&=
\sbvek{A\&B}{C\&D}
\spvek{
\widehat z_R
}{
-\kappa(k_R)\eta_R-\beta\gamma(\eta_R)
}.
\end{aligned}
\end{equation}

The following pointwise domain property permits restarting the
continuous evolution at a switching time.

\begin{lmm}[Pointwise domain property]
\label{lem:PointwiseDomainProperty}
Under the assumptions of
Proposition~\ref{prop:TruncatedWellPosedness}, the solution
$w_R$ of \eqref{eq:TruncatedEvolutionEquation} satisfies
$w_R(t)\in\mathcal D_{\rm a}$ for every $t\in[t_0,T]$.
\end{lmm}

\begin{proof}
By Proposition~\ref{prop:TruncatedWellPosedness}, there exists a set
$E\subset[t_0,T]$ of full measure such that
$w_R(t)\in\mathcal D_{\rm a}$ for all $t\in E$,
and
$\mathcal A_{R,{\rm a}}(t)w_R(t)$
is uniformly bounded on $E$.

Fix $t\in[t_0,T]$ and choose a sequence
$(t_n)_{n\in\N}\subset E$ with $t_n\to t$. Since
$w_R$ is continuous,
$w_R(t_n)\to w_R(t)$ in $\mathcal H_{\rm a}$.
Since
$\mathcal A_{R,{\rm a}}(\cdot)w_R(\cdot)$
is uniformly bounded on $E$, the sequence
\[
\bigl(
\mathcal A_{R,{\rm a}}(t_n)w_R(t_n)
\bigr)_{n\in\N}
\]
is bounded in $\mathcal H_{\rm a}$. After passing to a subsequence,
it therefore converges weakly to some $a\in\mathcal H_{\rm a}$.

The explicit time dependence of
$\mathcal A_{R,{\rm a}}(t)$ occurs only through $q(t)$ in
$K_R(t,\chi)$. Since the corresponding element $\eta$ satisfies
$\|\eta\|_\U<1$, there exists some $c_R>0$ such that, for all
$w\in\mathcal D_{\rm a}$ and $r,s\geq0$,
\[
\left\|
\mathcal A_{R,{\rm a}}(r)w
-
\mathcal A_{R,{\rm a}}(s)w
\right\|_{\mathcal H_{\rm a}}
\leq
c_R|r-s|.
\]
Hence
\[
\mathcal A_{R,{\rm a}}(t)w_R(t_n)
-
\mathcal A_{R,{\rm a}}(t_n)w_R(t_n)
\longrightarrow0
\quad\text{in }\mathcal H_{\rm a},
\]
and consequently
$\mathcal A_{R,{\rm a}}(t)w_R(t_n)\rightharpoonup a$.
Moreover,
\[
\bigl(
\mathcal A_{R,{\rm a}}(t)+\omega_RI_{\mathcal H_{\rm a}}
\bigr)w_R(t_n)
\rightharpoonup
a+\omega_Rw_R(t).
\]
The graph of a maximally monotone operator is strongly-weakly closed.
It follows that
$w_R(t)\in\mathcal D_{\rm a}$.
\end{proof}

\subsection{Equivalence of the closed-loop formulations}

\begin{prpstn}[Closed-loop equivalence]
\label{prop:ClosedLoopTransformation}
Suppose that Assumptions~\ref{assum:SystemClass} and
\ref{assum:FunnelData} hold, and let
$\mathcal I\subset\R_{\geq0}$ be an interval.

Suppose first that $(x,u,v)$ is a strong trajectory of
\eqref{eq:VelocityNode} on $\mathcal I$, that
$y\in\Wkp{1,\infty}_{\loc}(\mathcal I;\U)$,
$k_{\rm a}
\in
\Wkp{1,\infty}_{\loc}(\mathcal I)$,
$k_{\rm a}\geq0$,
and that
$\dot y=v$, $e=y-y_{\rm ref}$,
and \eqref{eq:ClosedLoopControlVelocity} hold almost everywhere.
Assume that
$\|\chi(t)\|_\U<1$ for all $t\in\mathcal I$,
and
$\|\eta(t)\|_\U<1$ for almost every $t\in\mathcal I$.
Define $\widehat z$ by \eqref{eq:TransformedState}. Then there exists
$\widehat f \in \Lp{\infty}_{\loc}(\mathcal I;\X)$
such that the first, second, and fourth relations in
\eqref{eq:TransformedClosedLoop} hold almost everywhere.

Conversely, suppose that
$\widehat z
\in
\Wkp{1,\infty}_{\loc}(\mathcal I;\X)$,
$\chi
\in
\Wkp{1,\infty}_{\loc}(\mathcal I;\U)$,
$k_{\rm a}
\in
\Wkp{1,\infty}_{\loc}(\mathcal I)$,
and that there exist
\[
\widehat f
\in
\Lp{\infty}_{\loc}(\mathcal I;\X),
\qquad
\eta
\in
\Lp{\infty}_{\loc}(\mathcal I;\U)
\]
satisfying the first, second, and fourth relations in
\eqref{eq:TransformedClosedLoop}. Assume that
$k_{\rm a}\geq0$,
$\|\chi(t)\|_\U<1$ for all $t\in\mathcal I$, and
$\|\eta(t)\|_\U<1$ for almost every $t\in\mathcal I$.
Define
\begin{equation}
\label{eq:ReconstructionFormulas}
\begin{aligned}
x
:={}&
\frac{1}{\tau_{\rm f}\varphi}
\left(
\widehat z
-
\tau_{\rm f}Qq\chi
-
\tau_{\rm f}\alpha Q\gamma(\chi)
\right)
+
L_\mu u_{\ext}
+
Q\dot y_{\rm ref},
\\
e
:={}&
\varphi^{-1}\chi,
\qquad
y
:=
y_{\rm ref}+e,
\qquad
e_{\rm r}
:=
\frac{1}{\tau_{\rm f}\varphi}\eta,
\\
u
:={}&
u_{\ext}
-
\frac{1}{\tau_{\rm f}\varphi}
\left(
k_{\rm a}\eta
+
\beta\gamma(\eta)
\right),
\\
v
:={}&
\dot y_{\rm ref}
+
\frac{1}{\tau_{\rm f}\varphi}
\left(
\eta
-
\tau_{\rm f}\alpha\gamma(\chi)
-
\tau_{\rm f}q\chi
\right).
\end{aligned}
\end{equation}
Then $(x,u,v)$ is a strong trajectory of
\eqref{eq:VelocityNode},
$y\in\Wkp{1,\infty}_{\loc}(\mathcal I;\U)$,
and $\dot y=v$ and the first three relations in
\eqref{eq:ClosedLoopControlVelocity} hold almost everywhere.
The two transformations are inverse to each other.
\end{prpstn}

The proof is given in
Appendix~\ref{app:ClosedLoopTransformation}.

\subsection{A priori funnel estimates}

We first observe that the normalized position error admits an estimate
which is independent of the system state and of the supervised gain.
\begin{lmm}[Uniform position-funnel margin]
\label{lem:UniformPositionFunnelMargin}
Let $\mathcal I\subset\R_{\geq0}$ be an interval and let
$t_0\in\mathcal I$. Suppose that
$\chi
\in
\Wkp{1,\infty}_{\loc}(\mathcal I;\U)$,
$\eta
\in
\Lp{\infty}_{\loc}(\mathcal I;\U)$
satisfy
$\dot\chi = \tau_{\rm f}^{-1}\eta - \alpha\gamma(\chi) - \delta\chi$
almost everywhere and
$\|\chi(t)\|_\U<1$ for all $t\in\mathcal I$,
as well as
$\|\eta(t)\|_\U<1$ for almost every $t\in\mathcal I$.
Then, for every $t\in\mathcal I$ with $t\geq t_0$,
\[
\|\chi(t)\|_\U
\leq
\max\{
\|\chi(t_0)\|_\U,r_*
\},
\]
where $r_*\in(0,1)$ is the unique number satisfying
\begin{equation}
\label{eq:InnerMarginRadius}
\frac{r_*}{1-r_*^2}
=
\frac{1}{\alpha\tau_{\rm f}}.
\end{equation}
\end{lmm}
\begin{proof}
The function
$t\longmapsto\frac12\|\chi(t)\|_\U^2$
is locally absolutely continuous, and
\[
\frac12\frac{\dd}{\dd t}\|\chi\|_\U^2
=
\tau_{\rm f}^{-1}
\scprod{\chi}{\eta}_\U
-
\alpha
\scprod{\chi}{\gamma(\chi)}_\U
-
\delta\|\chi\|_\U^2
\leq
\tau_{\rm f}^{-1}\|\chi\|_\U
-
\alpha
\frac{\|\chi\|_\U^2}
{1-\|\chi\|_\U^2}.
\]
The right-hand side is nonpositive whenever
$\|\chi\|_\U\geq r_*$. A first-exit argument proves the assertion.
\end{proof}
We next show that the normalized recursive error remains
strictly separated from the boundary of the unit ball on every finite
time interval. The system-node relation yields a transfer identity in
which the coercivity of $P(\mu)$ provides a bound for
$\gamma(\eta)$. Since $\|\gamma(\eta)\|_\U$ tends to infinity as
$\|\eta\|_\U$ approaches one, this bound implies the desired finite-horizon
funnel margin. The term involving the nonnegative gain $k$ contributes
with the same favorable sign and therefore does not weaken the estimate.

\begin{prpstn}[Finite-horizon recursive-error funnel margin]
\label{prop:FiniteHorizonRecursiveErrorMargin}
Suppose that Assumptions~\ref{assum:SystemClass} and
\ref{assum:FunnelData} hold, and let
$0\leq t_0<T$. Suppose that
$\widehat z
\in
\Wkp{1,\infty}([t_0,T];\X)$,
$\widehat f
\in
\Lp{\infty}([t_0,T];\X)$,
$k \in \Lp{\infty}([t_0,T])$, $k\geq0$,
and
$\eta \in \Lp{\infty}([t_0,T];\U)$
satisfy
\begin{equation}
\label{eq:FiniteHorizonGainNodeRelation}
\spvek{
\widehat f
}{
\eta
}
=
\sbvek{A\&B}{C\&D}
\spvek{
\widehat z
}{
-k\eta-\beta\gamma(\eta)
}
\end{equation}
almost everywhere on $[t_0,T]$. Assume additionally that
$\|\eta(t)\|_\U<1$
for almost every $t\in[t_0,T]$. Then
$\gamma(\eta) \in \Lp{\infty}([t_0,T];\U)$.
Consequently, there exists some
$\rho\in(0,1)$ such that
$\|\eta(t)\|_\U \leq \rho$
for almost every $t\in[t_0,T]$.
\end{prpstn}

\begin{proof}
The resolvent representation of
\eqref{eq:FiniteHorizonGainNodeRelation} gives
\begin{equation}
\label{eq:TransferIdentityForEta}
P(\mu)
\left(
k\eta+\beta\gamma(\eta)
\right)
+
\eta
=
C(\mu I-A)^{-1}
\bigl(
\mu\widehat z-\widehat f
\bigr).
\end{equation}
The right-hand side belongs to
$\Lp{\infty}([t_0,T];\U)$. Hence its norm is bounded by some
$C_T>0$ almost everywhere.

Since
$\eta = \bigl( 1-\|\eta\|_\U^2 \bigr)\gamma(\eta)$,
we have
\[
\scprod{P(\mu)\eta}{\gamma(\eta)}_\U
=
\bigl(
1-\|\eta\|_\U^2
\bigr)
\scprod{
P(\mu)\gamma(\eta)
}{
\gamma(\eta)
}_\U
\geq0.
\]
Taking the inner product of the left-hand side of
\eqref{eq:TransferIdentityForEta} with $\gamma(\eta)$, estimating the
right-hand side by $C_T\|\gamma(\eta)\|_\U$, and using
$k\geq0$ and
\eqref{eq:TransferFunctionCoercivity} gives
\[
\beta m_\mu \|\gamma(\eta)\|_\U^2
\leq
C_T\|\gamma(\eta)\|_\U
\]
almost everywhere. Thus
$\|\gamma(\eta)\|_\U \leq \frac{C_T}{\beta m_\mu}$
almost everywhere. The strict recursive-error funnel margin follows from
$\|\gamma(\xi)\|_\U\to\infty$ as $\|\xi\|_\U\to1$.
\end{proof}

\subsection{Global evolution in a fixed supervisor mode}

We next remove the position-error truncation and obtain a global trajectory for
each fixed value of $\sigma$.

\begin{prpstn}[Global trajectory in a fixed mode]
\label{prop:FixedModeGlobalTrajectory}
Suppose that Assumptions~\ref{assum:SystemClass} and
\ref{assum:FunnelData} hold. Let
$t_0\geq0$, $\sigma\in\{0,1\}$, and
$w_0=(\widehat z_0,\chi_0,k_0)\in\mathcal D_{\rm a}$ satisfy
$k_0\geq0$ and
$\|\chi_0\|_\U<1$.
Then there exists a unique global transformed trajectory
$\widehat z \in \Wkp{1,\infty}_{\loc} \bigl( [t_0,\infty); \X \bigr)$, $\chi
\in
\Wkp{1,\infty}_{\loc}
\bigl(
[t_0,\infty);
\U
\bigr)$,
$k
\in
\Wkp{1,\infty}_{\loc}
\bigl(
[t_0,\infty)
\bigr)$,
$\widehat f
\in
\Lp{\infty}_{\loc}
\bigl(
[t_0,\infty);
\X
\bigr)$,
and $\eta
\in
\Lp{\infty}_{\loc}
\bigl(
[t_0,\infty);
\U
\bigr)$
satisfying \eqref{eq:TransformedClosedLoop} with
$k_{\rm a}=k$ and the fixed mode $\sigma$.

Moreover,
$k(t) = k_0+\nu\sigma(t-t_0)$,
and
$\|\chi(t)\|_\U \leq \max\{\|\chi_0\|_\U,r_*\} <1$
for all $t\geq t_0$. For every $T>t_0$, there exists some
$\rho\in(0,1)$ such that
$\|\eta(t)\|_\U \leq \rho$
for almost every $t\in[t_0,T]$.
\end{prpstn}

\begin{proof}
Set
$\rho_\chi := \max\{\|\chi_0\|_\U,r_*\} <1$
and choose
$R\in(\rho_\chi,1)$.
For $T>t_0$, let
$w_R=(\widehat z_R,\chi_R,k_R)$
be the solution of the truncated evolution equation on $[t_0,T]$
provided by
Proposition~\ref{prop:TruncatedWellPosedness}. Let
$\widehat f_R$ and $\eta_R$ be the associated elements in
\eqref{eq:ExpandedExtendedTruncatedSystem}.

Define
\[
t_R
:=
\inf
\setdef{
t\in[t_0,T]
}{
\|\chi_R(t)\|_\U\geq R
},
\]
where $\inf\emptyset:=T$. On $[t_0,t_R)$, the truncated trajectory
satisfies the untruncated transformed equations. Hence
Lemma~\ref{lem:UniformPositionFunnelMargin} gives
$\|\chi_R(t)\|_\U\leq\rho_\chi$ for all $t\in[t_0,t_R)$.
If $t_R<T$, continuity of $\chi_R$ would imply
$R \leq \|\chi_R(t_R)\|_\U \leq \rho_\chi < R$,
which is impossible. Hence $t_R=T$, and the truncated trajectory
solves the untruncated transformed system on $[t_0,T]$.

The boundedness of
$\mathcal A_{R,{\rm a}}(\cdot)w_R(\cdot)$
implies
$\eta_R\in\Lp{\infty}([t_0,T];\U)$
and
$\widehat f_R\in\Lp{\infty}([t_0,T];\X)$.
Proposition~\ref{prop:FiniteHorizonRecursiveErrorMargin} therefore gives a strict recursive-error funnel margin on $[t_0,T]$.
The radius $R$ is independent of $T$. Uniqueness of the truncated
evolution implies consistency of the finite-horizon trajectories,
which therefore define a global trajectory on $[t_0,\infty)$.

To prove uniqueness, let a second transformed trajectory with the
stated properties be given and fix $T>t_0$. By continuity of the two
normalized position errors, there exists some $R_T\in(0,1)$ such that
both remain in $\mathcal B_{R_T}^{\U}(0)$ on $[t_0,T]$. Hence both
trajectories solve the same truncated evolution equation on this
interval and therefore coincide. Since $T>t_0$ was arbitrary, the
global trajectory is unique.
\end{proof}

\subsection{The supervised gain}
\label{subsec:SupervisedGain}

We now construct a single supervised closed-loop trajectory
from the global fixed-mode trajectories provided by
Proposition~\ref{prop:FixedModeGlobalTrajectory}. Starting in the
holding mode, we determine the next trigger time and then concatenate
the holding-mode trajectory with a gain-increase trajectory of fixed
duration. This procedure is repeated recursively.
Let
$T_{\rm inc} = \frac{\Delta k}{\nu}$. Set
$s_0:=0$, $k_0:=k_{{\rm a},0}$.
The time $s_n$ denotes the beginning of the $n$th holding phase and
$k_n$ is the gain value during this phase.

Suppose that the closed-loop trajectory has been constructed up to
$s_n$ and that
$k_{\rm a}(s_n)=k_n$.
Starting from the state at $s_n$, consider the unique global
trajectory in the holding mode $\sigma=0$ provided by
Proposition~\ref{prop:FixedModeGlobalTrajectory}. Define
\begin{equation}
\label{eq:SupervisorTriggerTime}
\tau_n
:=
\inf
\left\{
t>s_n:
\operatorname*{ess\,sup}_{s\in(s_n,t)}
\|\eta(s)\|_\U
\geq
r_{\rm a}
\right\},
\end{equation}
where the infimum of the empty set is understood as infinity.

If $\tau_n=\infty$, the supervisor remains in the holding mode for all
subsequent times and the construction terminates. If
$\tau_n<\infty$, the holding-mode trajectory is retained on
$[s_n,\tau_n]$. Starting from its value at $\tau_n$, the gain-increase
mode $\sigma=1$ is applied on
$[\tau_n,s_{n+1}]$, $s_{n+1} := \tau_n+T_{\rm inc}$.
During this interval,
$k_{\rm a}(t) = k_n+\nu(t-\tau_n)$,
and therefore
$k_{\rm a}(s_{n+1}) = k_n+\Delta k =: k_{n+1}$.
The supervisor then returns to the holding mode and the procedure is
repeated.

By Lemma~\ref{lem:PointwiseDomainProperty}, the transformed state
belongs to $\mathcal D_{\rm a}$ at every switching time. Hence each
new continuous mode starts from an admissible initial value.

The use of the essential supremum in
\eqref{eq:SupervisorTriggerTime} makes the switching rule independent
of the representative chosen for the essentially bounded signal
$\eta$. If the monitoring level is already violated immediately after
the beginning of a holding phase, then $\tau_n=s_n$, and the next
gain-increase phase starts without a positive holding interval.

The beginning times of successive gain-increase phases satisfy
$\tau_{n+1} \geq \tau_n+T_{\rm inc}$.
Consequently, only finitely many gain-increase phases can begin on a
bounded time interval. In particular, the switching times cannot
accumulate in finite time.

\subsection{Global existence and funnel performance}

\begin{thrm}
\label{thm:Main_Thm_Exist_Uniq}
Suppose that Assumptions~\ref{assum:SystemClass},
\ref{assum:FunnelData},
\ref{assum:SupervisorData}, and
\ref{assum:InitialValues} hold.

Then there exists a unique quintuple
$(x,y,u,v,k_{\rm a})$
such that $(x,u,v)$ is a strong trajectory of
\eqref{eq:VelocityNode} on $\R_{\geq0}$,
$y
\in
\Wkp{1,\infty}(\R_{\geq0};\U)$,
$k_{\rm a}
\in
\Wkp{1,\infty}_{\loc}(\R_{\geq0})$,
$x(0)=x_0$, $y(0)=y_0$, $k_{\rm a}(0)=k_{{\rm a},0}$,
and, with
$e:=y-y_{\rm ref}$,
the relations
$\dot y=v$
and \eqref{eq:ClosedLoopControlVelocity} hold almost everywhere.
The gain $k_{\rm a}$ is generated by the recursive
supervisor construction in Subsection~\ref{subsec:SupervisedGain},
with the trigger times defined by
\eqref{eq:SupervisorTriggerTime}. The controller is well defined almost everywhere on
$\R_{\geq0}$. Moreover,
$\varphi(t)\|e(t)\|_\U<1$ for all $t\geq0$,
and
$\tau_{\rm f}\varphi(t)
\|e_{\rm r}(t)\|_\U
<1$ for almost every $t\geq0$.
In fact, there exists some $\varepsilon_\chi>0$ such that, for every
$t\geq0$,
\begin{equation}
\label{eq:UniformPositionFunnelDistance}
\varphi(t)\|e(t)\|_\U
\leq
1-\varepsilon_\chi.
\end{equation}
For every $T>0$, there exists some
$\varepsilon_{\eta,T}>0$ such that, for almost every $t\in[0,T]$,
\begin{equation}
\label{eq:FiniteHorizonRecursiveFunnelDistance}
\tau_{\rm f}\varphi(t)
\|e_{\rm r}(t)\|_\U
\leq
1-\varepsilon_{\eta,T}.
\end{equation}

Furthermore,
$0\leq\dot k_{\rm a}(t)\leq\nu$ for almost every $t\geq0$,
and, for every $t\geq0$,
\begin{equation}
\label{eq:FiniteHorizonGainBound}
0
\leq
k_{\rm a}(t)
\leq
k_{{\rm a},0}+\nu t.
\end{equation}
In particular,
$e,\dot e,e_{\rm r},v \in \Lp{\infty}(\R_{\geq0};\U)$,
whereas
$u
\in
\Lp{\infty}_{\loc}(\R_{\geq0};\U)$,
$k_{\rm a}
\in
\Lp{\infty}_{\loc}(\R_{\geq0})$.
\end{thrm}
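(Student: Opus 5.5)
The argument assembles the pieces already established: the initial data are translated into an admissible transformed initial value, the fixed-mode trajectories of Proposition~\ref{prop:FixedModeGlobalTrajectory} are concatenated according to the supervisor construction, everything is transported back via Proposition~\ref{prop:ClosedLoopTransformation}, and the funnel and boundedness estimates are read off from Lemma~\ref{lem:UniformPositionFunnelMargin} and Proposition~\ref{prop:FiniteHorizonRecursiveErrorMargin}.

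\textbf{Initial value.} Set $\chi_0:=\varphi(0)(y_0-y_{\rm ref}(0))$, so that $\|\chi_0\|_\U<1$, and $\eta_0:=\tau_{\rm f}\varphi(0)e_{\rm r}(0)$, so that $\|\eta_0\|_\U<r_{\rm a}<1$. Define $\widehat z_0$ by \eqref{eq:TransformedState} evaluated at $t=0$, and set $w_0:=(\widehat z_0,\chi_0,k_{{\rm a},0})$. By \eqref{eq:InitialControllerCompatibility}, $\tau_{\rm f}\varphi(0)(u_0-u_{\ext}(0))=-k_{{\rm a},0}\eta_0-\beta\gamma(\eta_0)$. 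The identity \eqref{eq:TransformationDomainIdentity} at $t=0$, combined with Lemma~\ref{lem:ZeroOutputLifting}, $CQ=I_\U$, and the definition of $e_{\rm r}(0)$, then shows that $\spvek{\widehat z_0}{-k_{{\rm a},0}\eta_0-\beta\gamma(\eta_0)}\in\dom(S)$ with output $\eta_0$. Hence $w_0\in\mathcal D_{\rm a}$.

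\textbf{Concatenation and back-transformation.} Starting from $w_0$, build the trajectory recursively as in Subsection~\ref{subsec:SupervisedGain}. Proposition~\ref{prop:FixedModeGlobalTrajectory} supplies each mode piece. Lemma~\ref{lem:PointwiseDomainProperty} guarantees admissible restart values at the switching times $\tau_n$ and $s_{n+1}$. Lemma~\ref{lem:UniformPositionFunnelMargin} gives $\|\chi\|_\U<1$ at each restart, and the gain stays nonnegative. Since $\tau_{n+1}\ge\tau_n+T_{\rm inc}$, the switching times do not accumulate, so the pieces cover $\R_{\ge0}$. Absolute continuity of $\widehat z,\chi,k_{\rm a}$ is preserved across finitely many junctions in any bounded interval, so the concatenation lies in $\Wkp{1,\infty}_{\loc}$. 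Applying the converse part of Proposition~\ref{prop:ClosedLoopTransformation} on each bounded interval yields $(x,u,v,y)$ with the required properties, and $x(0)=x_0$, $y(0)=y_0$ follow from \eqref{eq:ReconstructionFormulas}.

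\textbf{Uniqueness.} Given another solution, transform it forward with the direct part of Proposition~\ref{prop:ClosedLoopTransformation}. Its gain is generated by the same supervisor rule, and the trigger times \eqref{eq:SupervisorTriggerTime} depend only on the trajectory on the current holding interval. Induction over $n$, using uniqueness in each fixed mode (Proposition~\ref{prop:FixedModeGlobalTrajectory}), shows that both solutions share the same switching times and coincide on every interval. At $t=0$, uniqueness of $u_0$ (Remark~\ref{rem:UniqueInitialInput}) ensures that the transformed initial values agree.

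\textbf{Estimates.} Apply Lemma~\ref{lem:UniformPositionFunnelMargin} from $t_0=0$ along the whole concatenated trajectory; its hypotheses are satisfied on all of $\R_{\ge0}$. This gives $\|\chi\|_\U\le\max\{\|\chi_0\|_\U,r_*\}$, which is \eqref{eq:UniformPositionFunnelDistance}. Proposition~\ref{prop:FiniteHorizonRecursiveErrorMargin} on $[0,T]$, with the gain $k_{\rm a}\ge0$, gives \eqref{eq:FiniteHorizonRecursiveFunnelDistance}. The gain bounds follow directly from $\dot k_{\rm a}=\nu\sigma\in\{0,\nu\}$.

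\textbf{Global boundedness.} Because $\varphi$ is bounded below, $\|e_{\rm r}\|_\U<1/(\tau_{\rm f}\varphi)$ is globally bounded. Since $\|\chi\|_\U\le1-\varepsilon_\chi$, the coefficient $\gamma(\chi)$ is bounded, and $q$ is bounded. Hence $\dot e=e_{\rm r}-(\alpha\gamma(\chi)/\varphi+q e)$ is bounded globally, and so is $v=\dot e+\dot y_{\rm ref}$. Together with $y_{\rm ref}\in\Wkp{3,\infty}$, this gives $e,y\in\Wkp{1,\infty}$. The bound $u\in\Lp\infty_{\loc}$ follows from \eqref{eq:IntroController} together with the finite-horizon margin and the gain bound.

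\textbf{Main obstacle.} The delicate point is uniqueness through the supervisor. One must check that the trigger rule, defined via an essential supremum over $\eta$, is determined by the trajectory on the preceding interval. The degenerate case $\tau_n=s_n$ must also be handled carefully, so that the induction over switching times is rigorous.
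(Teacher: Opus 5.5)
Your proposal is correct and follows essentially the same route as the paper's proof. It uses the same construction of $w_0\in\mathcal D_{\rm a}$, concatenates the fixed-mode trajectories with restarts justified by Lemma~\ref{lem:PointwiseDomainProperty}, and back-transforms via Proposition~\ref{prop:ClosedLoopTransformation}. Uniqueness is obtained by induction over the switching times using fixed-mode uniqueness, and the estimates are read off from Lemma~\ref{lem:UniformPositionFunnelMargin} and Proposition~\ref{prop:FiniteHorizonRecursiveErrorMargin}.

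Two minor remarks. The appeal to Remark~\ref{rem:UniqueInitialInput} in the uniqueness step is superfluous, since $\widehat z(0)$, $\chi(0)$ and $k_{\rm a}(0)$ are determined by $x_0$, $y_0$, $k_{{\rm a},0}$ and the prescribed data alone. Applying Lemma~\ref{lem:UniformPositionFunnelMargin} once on all of $\R_{\geq0}$, instead of successively on the mode intervals, is legitimate because the $\chi$-equation does not depend on the mode.
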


\begin{proof}
Let $u_0$ be the element from
Assumption~\ref{assum:InitialValues}, and define
\[
\begin{aligned}
\chi_0
&:=
\varphi(0)
\bigl(
y_0-y_{\rm ref}(0)
\bigr),
\qquad
\eta_0
:=
\tau_{\rm f}\varphi(0)e_{\rm r}(0),
\\
\widehat z_0
&:={}
\tau_{\rm f}\varphi(0)
\left(
x_0
-
L_\mu u_{\ext}(0)
-
Q\dot y_{\rm ref}(0)
\right)
+
\tau_{\rm f}Qq(0)\chi_0
+
\tau_{\rm f}\alpha Q\gamma(\chi_0).
\end{aligned}
\]
By Assumption~\ref{assum:InitialValues},
$\|\chi_0\|_\U<1$, $\|\eta_0\|_\U<r_{\rm a}<1$.

The initial controller compatibility gives
\[
\tau_{\rm f}\varphi(0)
\bigl(
u_0-u_{\ext}(0)
\bigr)
=
-k_{{\rm a},0}\eta_0
-\beta\gamma(\eta_0).
\]
Using
$\spvek{x_0}{u_0}\in\dom(S)$,
Lemma~\ref{lem:ZeroOutputLifting}, and
$\im Q\subset\dom(A)$, we obtain
\[
\spvek{
\widehat z_0
}{
-k_{{\rm a},0}\eta_0-\beta\gamma(\eta_0)
}
\in\dom(S).
\]
By the definition of $e_{\rm r}(0)$, the corresponding system-node
output has second component $\eta_0$. Hence
$w_0:=(\widehat z_0,\chi_0,k_{{\rm a},0})
\in\mathcal D_{\rm a}$.

Starting from $w_0$, construct a single global transformed
trajectory by recursively concatenating the fixed-mode trajectories
described above: the holding-mode trajectory with $\sigma=0$ is used
until the next trigger time defined by
\eqref{eq:SupervisorTriggerTime}, and it is followed by the
gain-increase trajectory with $\sigma=1$ on the corresponding interval
of length $T_{\rm inc}$. Each continuous segment exists
globally by
Proposition~\ref{prop:FixedModeGlobalTrajectory}, and its terminal
state belongs to $\mathcal D_{\rm a}$ by
Lemma~\ref{lem:PointwiseDomainProperty}. Thus every mode change is
admissible.

Since every gain-increase phase has the fixed positive duration
$T_{\rm inc}$, only finitely many switches occur on each bounded time
interval. The continuous segments therefore define a trajectory
$(\widehat z,\chi,k_{\rm a})
\in
\Wkp{1,\infty}_{\loc}
\bigl(
\R_{\geq0};
\mathcal H_{\rm a}
\bigr)$.
The associated quantities
$\widehat f
\in
\Lp{\infty}_{\loc}(\R_{\geq0};\X)$,
$\eta
\in
\Lp{\infty}_{\loc}(\R_{\geq0};\U)$
satisfy the transformed closed-loop equations almost everywhere.

Let $r_*$ be given by
\eqref{eq:InnerMarginRadius} and set
$\rho_\chi := \max\{ \|\chi_0\|_\U,r_* \} <1$.
Lemma~\ref{lem:UniformPositionFunnelMargin}, applied successively on
the continuous mode intervals, gives
$\|\chi(t)\|_\U\leq\rho_\chi$ for all $t\geq0$.
Thus
\eqref{eq:UniformPositionFunnelDistance} holds with
$\varepsilon_\chi := 1-\rho_\chi$.

Fix $T>0$. Only finitely many switches occur on $[0,T]$. Hence
$\widehat z
\in
\Wkp{1,\infty}([0,T];\X)$,
$\widehat f
\in
\Lp{\infty}([0,T];\X)$,
and
$k_{\rm a} \in \Lp{\infty}([0,T])$, $k_{\rm a}\geq0$.
Proposition~\ref{prop:FiniteHorizonRecursiveErrorMargin} therefore yields
$\gamma(\eta) \in \Lp{\infty}([0,T];\U)$
and a strict recursive-error funnel margin on $[0,T]$. This proves
\eqref{eq:FiniteHorizonRecursiveFunnelDistance}.

Proposition~\ref{prop:ClosedLoopTransformation} reconstructs
$(x,y,u,v,k_{\rm a})$ and yields the system, position, recursive-error,
and feedback relations. The gain equation
$\dot k_{\rm a}=\nu\sigma$
holds by construction on every holding and gain-increase interval and
hence almost everywhere on $\R_{\geq0}$. Thus the reconstructed
quintuple is a global closed-loop trajectory. The estimate
\eqref{eq:FiniteHorizonGainBound} follows directly from
$0\leq\dot k_{\rm a}\leq\nu$.

To prove uniqueness, suppose that another supervised closed-loop
trajectory with the same initial values is given. Both trajectories
coincide in the initial holding mode by uniqueness in
Proposition~\ref{prop:FixedModeGlobalTrajectory}. Hence their first
trigger times, defined by the same essential-supremum condition,
coincide. The subsequent gain-increase trajectories are again equal
by fixed-mode uniqueness. Repeating the argument at the following
switching times and using the fact that only finitely many switches
occur on bounded time intervals shows that the two trajectories
coincide globally.

Finally,
$e=\varphi^{-1}\chi$, $e_{\rm r} = (\tau_{\rm f}\varphi)^{-1}\eta$,
and
$\dot e = e_{\rm r} - \left( \frac{\alpha} {1-\|\chi\|_\U^2} + q \right)e$.
Since $\varphi$ is bounded away from zero,
$\|\chi(t)\|_\U\leq\rho_\chi<1$, and
$\|\eta(t)\|_\U<1$ almost everywhere, it follows that
$e,e_{\rm r},\dot e \in \Lp{\infty}(\R_{\geq0};\U)$.
Hence
$v = \dot e+\dot y_{\rm ref} \in \Lp{\infty}(\R_{\geq0};\U)$,
and therefore
$y = y_{\rm ref}+e \in \Wkp{1,\infty}(\R_{\geq0};\U)$.

On every finite interval,
Proposition~\ref{prop:FiniteHorizonRecursiveErrorMargin} gives
$\gamma(\eta) \in \Lp{\infty}_{\loc}(\R_{\geq0};\U)$,
while \eqref{eq:FiniteHorizonGainBound} gives
$k_{\rm a} \in \Lp{\infty}_{\loc}(\R_{\geq0})$.
The feedback law
\[
u
=
u_{\ext}
-
\frac{1}{\tau_{\rm f}\varphi}
\left(
k_{\rm a}\eta
+
\beta\gamma(\eta)
\right)
\]
therefore implies
$u \in \Lp{\infty}_{\loc}(\R_{\geq0};\U)$.
\end{proof}

\section{Global boundedness}
\label{sec:GlobalBoundedness}

The preceding theorem guarantees global existence and funnel
feasibility under impedance passivity alone. In particular,
$e,\dot e,e_{\rm r},v
\in
\Lp{\infty}(\R_{\geq0};\U)$,
$u\in\Lp{\infty}_{\loc}(\R_{\geq0};\U)$
and
$k_{\rm a}\in\Lp{\infty}_{\loc}(\R_{\geq0})$. The position error is uniformly separated from its funnel boundary,
whereas the recursive-error margin is initially known only on finite
time intervals. Moreover, neither the internal state nor the number
of gain increases is controlled by the preceding result.

We now introduce a qualitative system condition under which the
supervisor is activated only finitely many times. As a consequence,
the supervised gain, the internal state, and the control input remain
globally bounded, and the recursive-error funnel margin becomes
uniform in time.

\begin{assmptn}[Position-compatible strict dissipativity]
\label{assum:PositionCompatibleDissipativity}
There exist a bounded position operator
$\mathcal J_{\rm p}\in\Lb(\X,\U)$,
a bounded, self-adjoint, and coercive operator
$\Pi\in\Lb(\X)$,
and constants
$\omega_{\rm s}>0$ and
$c_{\rm p},c_{\rm v}\geq0$
such that, for every
\[
\spvek{x}{u}\in\dom(S),
\qquad
\spvek{f}{v}
=
\sbvek{A\&B}{C\&D}
\spvek{x}{u},
\]
one has
$\mathcal J_{\rm p}f=v$
and
\begin{equation}
\label{eq:PositionCompatibleDissipativity}
\scprod{\Pi x}{f}_\X
\leq
\scprod{u}{v}_\U
-
\omega_{\rm s}\|x\|_\X^2
+
c_{\rm p}\|\mathcal J_{\rm p}x\|_\U^2
+
c_{\rm v}\|v\|_\U^2.
\end{equation}
\end{assmptn}

Condition $\mathcal J_{\rm p}f=v$ means that
$\mathcal J_{\rm p}x$ is a bounded state-space realization of the
integrated velocity output. Indeed, along every strong
trajectory, $f=\dot x$ almost everywhere, and hence
\[
\frac{\dd}{\dd t}
\bigl(
\mathcal J_{\rm p}x-y
\bigr)
=
\mathcal J_{\rm p}f-\dot y
=
v-v
=
0
\quad\text{almost everywhere}.
\]
Consequently, for every $t\geq0$,
\begin{equation}
\label{eq:PositionCoordinateConservation}
\mathcal J_{\rm p}x(t)-y(t)
=
\mathcal J_{\rm p}x_0-y_0.
\end{equation}
The defect term involving $\mathcal J_{\rm p}x$ in
\eqref{eq:PositionCompatibleDissipativity} allows for
stationary configurations with a nonzero position component, such as
a statically deflected mechanical system. The term involving $v$ accommodates observation
effects which need not be controlled directly by the state norm.

For the global transformed trajectory from
Theorem~\ref{thm:Main_Thm_Exist_Uniq}, set
\[
\zeta(t)
:=
\mathcal J_{\rm p}\widehat z(t),
\qquad
K_\chi(t)
:=
K\bigl(t,\chi(t)\bigr)
=
q(t)Q+\alpha Q\gamma'\bigl(\chi(t)\bigr).
\]
It follows from \eqref{eq:PositionCoordinateConservation} and
\eqref{eq:TransformedState} that
\begin{equation}
\label{eq:ZetaRepresentation}
\zeta
={}
\tau_{\rm f}\varphi
\left(
y+\mathcal J_{\rm p}x_0-y_0-
\mathcal J_{\rm p}L_\mu u_{\ext}
-
\mathcal J_{\rm p}Q\dot y_{\rm ref}
\right)
+
\tau_{\rm f}\mathcal J_{\rm p}Qq\chi
+
\tau_{\rm f}\alpha
\mathcal J_{\rm p}Q\gamma(\chi).
\end{equation}
Further, collect the terms in the first equation of
\eqref{eq:TransformedClosedLoop} which do not contain
$\widehat f$, $K(t,\chi)\eta$, or
$(q-\delta)\widehat z$ by defining
\[
\begin{aligned}
\mathfrak h
:={}&
\mathfrak d
+
\tau_{\rm f}
\bigl(
\dot q-q^2
\bigr)Q\chi
-
\tau_{\rm f}qAQ\chi
-
\tau_{\rm f}\alpha AQ\gamma(\chi) -
\tau_{\rm f}\alpha
\bigl(
2q-\delta
\bigr)Q\gamma(\chi)
-
\tau_{\rm f}\alpha^2
Q\gamma'(\chi)\gamma(\chi)
-
\tau_{\rm f}\alpha\delta
Q\gamma'(\chi)\chi.
\end{aligned}
\]
The transformed closed-loop equations then take the form
\begin{equation}
\label{eq:GlobalBoundednessTransformedDynamics}
\begin{aligned}
\dot{\widehat z}
&=
\widehat f
+
K_\chi\eta
+
(q-\delta)\widehat z
+
\mathfrak h,
\\
\spvek{\widehat f}{\eta}
&=
\sbvek{A\&B}{C\&D}
\spvek{
\widehat z
}{
-k_{\rm a}\eta-\beta\gamma(\eta)
}.
\end{aligned}
\end{equation}

Let $\rho_\chi\in(0,1)$ be the uniform position-funnel bound from
Lemma~\ref{lem:UniformPositionFunnelMargin}. Since
$\|\chi(t)\|_\U\leq\rho_\chi$ for all $t\geq0$,
and $\|\eta(t)\|_\U<1$ almost everywhere,
\eqref{eq:ChiDynamics} gives
$\chi\in\Wkp{1,\infty}(\R_{\geq0};\U)$.
The representation
\eqref{eq:ZetaRepresentation}, the prescribed data and the
boundedness of $\gamma$, $\gamma'$, and $\gamma''$ on
$\overline{\mathcal B_{\rho_\chi}^{\U}(0)}$
therefore imply
\begin{equation}
\label{eq:BoundedGlobalCoefficients}
\zeta
\in
\Wkp{1,\infty}(\R_{\geq0};\U),
\qquad
\mathfrak h
\in
\Wkp{1,\infty}(\R_{\geq0};\X),
\qquad
K_\chi
\in
\Wkp{1,\infty}
\bigl(
\R_{\geq0};
\Lb(\U,\X)
\bigr).
\end{equation}

We first establish a global state bound which does not require the
gain to be sufficiently large.

\begin{prpstn}[Global bound for the transformed state]
\label{prop:GlobalTransformedStateBound}
Suppose that
Assumptions~\ref{assum:SystemClass},
\ref{assum:FunnelData},
\ref{assum:SupervisorData},
\ref{assum:InitialValues}, and
\ref{assum:PositionCompatibleDissipativity}
hold. Then the global transformed closed-loop trajectory satisfies
$\widehat z \in \Lp{\infty}(\R_{\geq0};\X)$.
\end{prpstn}

\begin{proof}
Define
\[
\mathcal V(t)
:=
\frac12
\scprod{\Pi\widehat z(t)}{\widehat z(t)}_\X.
\]
Since $\Pi$ is self-adjoint, the first relation in
\eqref{eq:GlobalBoundednessTransformedDynamics} gives, for almost every
$t\geq0$,
\[
\begin{aligned}
\dot{\mathcal V}
={}&
\scprod{\Pi\widehat z}{\widehat f}_\X
+
\scprod{\Pi\widehat z}{K_\chi\eta}_\X
+
2(q-\delta)\mathcal V
+
\scprod{\Pi\widehat z}{\mathfrak h}_\X.
\end{aligned}
\]
By the system-node relation in
\eqref{eq:GlobalBoundednessTransformedDynamics} and
Assumption~\ref{assum:PositionCompatibleDissipativity}, we have
\[
\mathcal J_{\rm p}\widehat f=\eta,
\qquad
\mathcal J_{\rm p}\widehat z=\zeta.
\]
Hence, applying
\eqref{eq:PositionCompatibleDissipativity} yields
\[
\begin{aligned}
\scprod{\Pi\widehat z}{\widehat f}_\X\leq{}&
-k_{\rm a}\|\eta\|_\U^2
-\beta\scprod{\gamma(\eta)}{\eta}_\U
-\omega_{\rm s}\|\widehat z\|_\X^2 +
c_{\rm p}\|\zeta\|_\U^2
+
c_{\rm v}\|\eta\|_\U^2.
\end{aligned}
\]
Since $k_{\rm a}\geq0$ and
$\scprod{\gamma(\eta)}{\eta}_\U\geq0$, the corresponding two terms
are nonpositive and may be discarded. By the global boundedness of
$K_\chi$ and $\mathfrak h$, the estimate
$\|\eta\|_\U<1$, and Young's inequality, for every
$\varepsilon>0$ there exists some $c_\varepsilon>0$ such that
\[
\begin{aligned}
&
\left|
\scprod{\Pi\widehat z}{K_\chi\eta}_\X
\right|
+
\left|
\scprod{\Pi\widehat z}{\mathfrak h}_\X
\right| \leq
2\varepsilon
\|\Pi\widehat z\|_\X^2
+
c_\varepsilon
\leq
2\varepsilon
\|\Pi\|_{\Lb(\X)}^2
\|\widehat z\|_\X^2
+
c_\varepsilon.
\end{aligned}
\]
Choose $\varepsilon>0$ sufficiently small that the resulting
quadratic term can be absorbed into
$-\omega_{\rm s}\|\widehat z\|_\X^2$.
The terms
$c_{\rm p}\|\zeta\|_\U^2$ and
$c_{\rm v}\|\eta\|_\U^2$
are globally bounded. Moreover,
\[
\mathcal V
\leq
\frac{\|\Pi\|_{\Lb(\X)}}{2}
\|\widehat z\|_\X^2.
\]
Consequently, there exist constants
$a_0,c_0>0$ such that, for almost every $t\geq0$,
\begin{equation}
\label{eq:UnscaledGlobalStorageEstimate}
\dot{\mathcal V}(t)
\leq
2\bigl(q(t)-\delta\bigr)\mathcal V(t)
-
a_0\mathcal V(t)
+
c_0.
\end{equation}
Since
$q-\delta=\dot\varphi/\varphi$, multiplication of
\eqref{eq:UnscaledGlobalStorageEstimate} by $\varphi^{-2}$ gives, for
almost every $t\geq0$,
\[
\frac{\dd}{\dd t}
\left(
\varphi(t)^{-2}\mathcal V(t)
\right)
\leq
-a_0\varphi(t)^{-2}\mathcal V(t)
+
c_0\varphi(t)^{-2}.
\]
Since $\varphi$ is bounded away from zero, there exists some
$c_1>0$ such that, for almost every $t\geq0$,
\[
\frac{\dd}{\dd t}
\left(
\varphi(t)^{-2}\mathcal V(t)
\right)
\leq
-a_0\varphi(t)^{-2}\mathcal V(t)+c_1.
\]
Gronwall's inequality yields
$\sup_{t\geq0} \varphi(t)^{-2}\mathcal V(t) <\infty$.
Since $\varphi$ is bounded and $\Pi$ is coercive, it follows that
$\widehat z \in \Lp{\infty}(\R_{\geq0};\X)$.
\end{proof}

The following result shows what happens after the gain has exceeded a
sufficiently large, but unknown, system-dependent level. This level
is used only in the proof and is not part of the supervisor.

\begin{lmm}[Regularity above a finite gain level]
\label{lem:HighGainRegularity}
Suppose that the assumptions of
Proposition~\ref{prop:GlobalTransformedStateBound} hold.
Let
\[
\rho_\chi
:=
\max\{
\|\chi(0)\|_\U,r_*
\}
\in(0,1)
\]
be the uniform position-error bound from
Lemma~\ref{lem:UniformPositionFunnelMargin}, and set
\[
K_\infty
:=
\sup_{\substack{t\geq0\\
\|\xi\|_\U\leq\rho_\chi}}
\|K(t,\xi)\|_{\Lb(\U,\X)}
<\infty.
\]
This constant bounds $K_\chi$ uniformly for the given trajectory and
for every continuation considered in the final assertion below.
Choose $k_*\geq0$ such that
\begin{equation}
\label{eq:HighGainThreshold}
k_*+\beta-c_{\rm v}
>
\frac{
\|\Pi\|_{\Lb(\X)}^2K_\infty^2
}{
4\omega_{\rm s}
}.
\end{equation}
Suppose that there exists some $T_*\geq0$ such that
$k_{\rm a}(t)\geq k_*$ for all $t\geq T_*$.
Then
$\widehat z
\in
\Wkp{1,\infty}(\R_{\geq0};\X)$,
$\widehat f
\in
\Lp{\infty}(\R_{\geq0};\X)$.
Moreover, there exists some $M_*>0$ such that, for almost every
$t\geq T_*$,
\begin{equation}
\label{eq:HighGainEtaEstimate}
\|\eta(t)\|_\U
\leq
\frac{M_*}
{1+m_\mu k_{\rm a}(t)},
\end{equation}
and
\begin{equation}
\label{eq:HighGainBarrierEstimate}
\|\gamma(\eta(t))\|_\U
\leq
\frac{M_*}{\beta m_\mu}.
\end{equation}
The constant $M_*$ can be chosen uniformly over all
global transformed trajectories which coincide with the given
trajectory on $[0,T_*]$, are assembled from the fixed-mode
trajectories of
Proposition~\ref{prop:FixedModeGlobalTrajectory}, and satisfy
\[
0\leq\dot k_{\rm a}(t)\leq\nu,
\qquad
k_{\rm a}(t)\geq k_*
\quad\text{for almost every }t\geq T_*.
\]
\end{lmm}
\begin{proof}
For $a\in(0,1]$, define
$\Delta_a\widehat z(t) := \widehat z(t+a)-\widehat z(t)$,
and analogously define the increments of
$\widehat f$, $\eta$, $\gamma(\eta)$, $k_{\rm a}$, and $\zeta$.

For each fixed $a\in(0,1]$, the system-node relation holds at both
$t$ and $t+a$ for almost every $t\geq0$. Subtracting these relations
and using linearity of the system node gives
\[
\spvek{
\Delta_a\widehat f
}{
\Delta_a\eta
}
=
\sbvek{A\&B}{C\&D}
\spvek{
\Delta_a\widehat z
}{
-k_{\rm a}(t+a)\Delta_a\eta
-
\bigl(
\Delta_a k_{\rm a}(t)
\bigr)\eta(t)
-
\beta
\bigl(
\gamma(\eta(t+a))-\gamma(\eta(t))
\bigr)
}.
\]
Since
$\mathcal J_{\rm p}\Delta_a\widehat z=\Delta_a\zeta$,
applying
\eqref{eq:PositionCompatibleDissipativity} to this difference and
using
\eqref{eq:StrongMonotonicityGamma} yields
\begin{equation}
\scprod{
\Pi\Delta_a\widehat z
}{
\Delta_a\widehat f
}_\X
\leq{}
-\omega_{\rm s}
\|\Delta_a\widehat z\|_\X^2
-
\bigl(
k_{\rm a}(t+a)+\beta-c_{\rm v}
\bigr)
\|\Delta_a\eta\|_\U^2
-
\bigl(
\Delta_a k_{\rm a}(t)
\bigr)
\scprod{
\eta(t)
}{
\Delta_a\eta(t)
}_\U
+
c_{\rm p}
\|\Delta_a\zeta\|_\U^2.
\label{eq:IncrementStrictDissipation}
\end{equation}
Subtracting the first equation in
\eqref{eq:GlobalBoundednessTransformedDynamics} at times $t+a$ and
$t$ gives
\begin{equation}
\label{eq:IncrementTransformedDynamics}
\frac{\dd}{\dd t}
\Delta_a\widehat z
=
\Delta_a\widehat f
+
K_\chi(t+a)\Delta_a\eta
+
\bigl(q(t+a)-\delta\bigr)\Delta_a\widehat z
+
\mathfrak r_a,
\end{equation}
where
\[
\mathfrak r_a(t)
:={}
\bigl(
K_\chi(t+a)-K_\chi(t)
\bigr)\eta(t)
+
\bigl(
q(t+a)-q(t)
\bigr)\widehat z(t)
+
\Delta_a\mathfrak h(t).
\]
By
\eqref{eq:BoundedGlobalCoefficients},
the representation
\eqref{eq:ZetaRepresentation},
Proposition~\ref{prop:GlobalTransformedStateBound}, and
$\|\eta\|_\U<1$, there exists some $c_{\rm rem}>0$,
independent of $a$, such that, for almost every $t\geq0$,
\begin{equation}
\label{eq:IncrementRemainderBound}
\|\mathfrak r_a(t)\|_\X
+
\|\Delta_a\zeta(t)\|_\U
\leq
c_{\rm rem}a.
\end{equation}
Furthermore,
Theorem~\ref{thm:Main_Thm_Exist_Uniq} gives
$|\Delta_a k_{\rm a}(t)| \leq \nu a$.

Condition \eqref{eq:HighGainThreshold} implies that there exists some
$c_*>0$ such that, for all $z\in\X$, $w\in\U$, and $k\geq k_*$,
\begin{equation}
\label{eq:HighGainQuadraticCoercivity}
\omega_{\rm s}\|z\|_\X^2
+
\bigl(
k+\beta-c_{\rm v}
\bigr)\|w\|_\U^2
-
\|\Pi\|_{\Lb(\X)}
K_\infty
\|z\|_\X\|w\|_\U
\geq
c_*
\left(
\|z\|_\X^2+\|w\|_\U^2
\right).
\end{equation}

Define
\[
\mathcal V_a(t)
:=
\frac12
\scprod{
\Pi\Delta_a\widehat z(t)
}{
\Delta_a\widehat z(t)
}_\X.
\]
Combining
\eqref{eq:IncrementStrictDissipation},
\eqref{eq:IncrementTransformedDynamics},
\eqref{eq:IncrementRemainderBound}, and
\eqref{eq:HighGainQuadraticCoercivity}, and applying Young's
inequality to the terms containing
$\Delta_a k_{\rm a}$ and $\mathfrak r_a$, gives new constants
$\widetilde c_1,\widetilde c_2>0$, independent of $a$, such that, for almost every
$t\geq T_*$,
\[
\dot{\mathcal V}_a(t)
\leq
2\bigl(q(t+a)-\delta\bigr)\mathcal V_a(t)
-
\widetilde c_1\|\Delta_a\widehat z(t)\|_\X^2
+
\widetilde c_2 a^2.
\]
Since
$q(t+a)-\delta = \frac{\dot\varphi(t+a)}{\varphi(t+a)}$,
the boundedness of $\Pi$ and the positive lower bound of
$\varphi$ give constants $c_3,c_4>0$ such that, for
almost every $t\geq T_*$,
\begin{equation}
\label{eq:ScaledIncrementStorageEstimate}
\frac{\dd}{\dd t}
\left(
\varphi(t+a)^{-2}\mathcal V_a(t)
\right)
\leq
-c_3
\varphi(t+a)^{-2}\mathcal V_a(t)
+
c_4a^2.
\end{equation}

The construction in the proof of
Theorem~\ref{thm:Main_Thm_Exist_Uniq} gives
$\widehat z\in\Wkp{1,\infty}_{\loc}(\R_{\geq0};\X)$. Hence there
exists some $c_5>0$ such that, for all sufficiently small $a>0$,
\[
\varphi(T_*+a)^{-2}\mathcal V_a(T_*)
\leq
c_5a^2.
\]
Gronwall's inequality, followed by the coercivity of
$\Pi$ and the boundedness of $\varphi$, shows that
\[
\sup_{t\geq T_*}
\|\widehat z(t+a)-\widehat z(t)\|_\X
\leq
c_6a.
\]
Let $T_*\leq s<t$. Choose $n\in\N$ sufficiently large such that
$a:=\frac{t-s}{n}$
lies in the range of the preceding estimate. Applying this estimate
successively at
$s,\ s+a,\ \ldots,\ s+(n-1)a$
gives
$\|\widehat z(t)-\widehat z(s)\|_\X \leq c_6(t-s)$.
Hence $\widehat z$ is Lipschitz continuous on
$[T_*,\infty)$ and therefore
$\widehat z \in \Wkp{1,\infty}([T_*,\infty);\X)$.
Together with the local regularity on $[0,T_*]$, this gives
$\widehat z \in \Wkp{1,\infty}(\R_{\geq0};\X)$.

The first equation in
\eqref{eq:GlobalBoundednessTransformedDynamics} now gives
\[
\widehat f
=
\dot{\widehat z}
-
K_\chi\eta
-
(q-\delta)\widehat z
-
\mathfrak h
\in
\Lp{\infty}(\R_{\geq0};\X).
\]

We record the uniformity of these estimates. Consider
any global transformed trajectory which coincides with the present
trajectory on $[0,T_*]$, is assembled from fixed-mode trajectories,
and satisfies
\[
0\leq\dot k_{\rm a}\leq\nu,
\qquad
k_{\rm a}\geq k_*
\quad\text{almost everywhere on }[T_*,\infty).
\]
The global bound for $\widehat z$ in
Proposition~\ref{prop:GlobalTransformedStateBound} is independent of
the subsequent mode selection. The bounds in
\eqref{eq:BoundedGlobalCoefficients}, the estimate
$|\Delta_a k_{\rm a}|\leq\nu a$, and the coercivity constant in
\eqref{eq:HighGainQuadraticCoercivity} are likewise uniform.
Since all such trajectories have the same value at $T_*$, the local
fixed-mode estimates yield a common constant in the initial
increment bound at $T_*$. Consequently, the preceding
difference-quotient argument gives uniform bounds for
$\widehat z$ in
$\Wkp{1,\infty}(\R_{\geq0};\X)$
and for $\widehat f$ in
$\Lp{\infty}(\R_{\geq0};\X)$.

The resolvent representation of the system-node relation in
\eqref{eq:GlobalBoundednessTransformedDynamics} is
\begin{equation}
\label{eq:GlobalTransferIdentityForEta}
P(\mu)
\left(
k_{\rm a}\eta+\beta\gamma(\eta)
\right)
+
\eta
=
C(\mu I-A)^{-1}
\bigl(
\mu\widehat z-\widehat f
\bigr).
\end{equation}
It follows that there exists some $M_*>0$, uniform over
the class of trajectories specified in the statement, such that the
norm of the right-hand side is bounded by $M_*$ almost everywhere. Since
$\eta = \bigl( 1-\|\eta\|_\U^2 \bigr)\gamma(\eta)$,
taking the inner product of the left-hand side of
\eqref{eq:GlobalTransferIdentityForEta} with $\gamma(\eta)$ and
estimating its right-hand side by $M_*\|\gamma(\eta)\|_\U$ gives
\[
\beta m_\mu\|\gamma(\eta)\|_\U^2
\leq
M_*\|\gamma(\eta)\|_\U.
\]
This proves
\eqref{eq:HighGainBarrierEstimate}.

Similarly, taking the inner product of the left-hand side of
\eqref{eq:GlobalTransferIdentityForEta} with $\eta$ and estimating
its right-hand side by $M_*\|\eta\|_\U$ gives
\[
\bigl(
1+m_\mu k_{\rm a}
\bigr)
\|\eta\|_\U^2
\leq
M_*\|\eta\|_\U,
\]
because $\gamma(\eta)$ is a nonnegative scalar multiple of $\eta$.
This proves
\eqref{eq:HighGainEtaEstimate}.
\end{proof}

We can now prove that the supervisor cannot increase the gain
infinitely often.

\begin{prpstn}[Finite supervisor activity]
\label{prop:FiniteSupervisorActivity}
Suppose that
Assumptions~\ref{assum:SystemClass},
\ref{assum:FunnelData},
\ref{assum:SupervisorData},
\ref{assum:InitialValues}, and
\ref{assum:PositionCompatibleDissipativity}
hold. Then the supervisor enters the gain-increase mode only finitely
many times. In particular,
$k_{\rm a} \in \Wkp{1,\infty}(\R_{\geq0})$.
\end{prpstn}

\begin{proof}
Assume, to the contrary, that the supervisor enters the
gain-increase mode infinitely often. Since every activation increases
the gain by $\Delta k>0$ and the gain never decreases, it follows
that
$k_{\rm a}(t)\longrightarrow\infty$ as $t\to\infty$.

Choose $k_*$ as in
\eqref{eq:HighGainThreshold}. Then there exists some $T_*\geq0$ such
that
\[
k_{\rm a}(t)\geq k_*
\qquad
\text{for all }t\geq T_*.
\]
Apply Lemma~\ref{lem:HighGainRegularity} and fix the
uniform constant $M_*>0$ furnished by its final assertion. Choose
$k_{\rm m}\geq k_*$ so large that
\[
\frac{M_*}
{1+m_\mu k_{\rm m}}
<
r_{\rm a}.
\]
Since gain-increase phases occur infinitely often and cannot
accumulate in finite time, we may choose an index $n\geq1$ such that
the beginning $s_n$ of the $n$th holding phase satisfies
\[
s_n\geq T_*,
\qquad
k_{\rm a}(s_n)\geq k_{\rm m}.
\]

Let $\widetilde\eta_n$ and $\widetilde k_n$ denote the normalized
recursive error and the gain of the global holding-mode trajectory
starting from the transformed state at $s_n$ that is used in the
definition of $\tau_n$. Concatenate the supervised trajectory on
$[0,s_n]$ with this holding-mode continuation. The resulting
trajectory coincides with the supervised trajectory on $[0,T_*]$,
satisfies
$0\leq\dot{\widetilde k}_n\leq\nu$, and has
\[
\widetilde k_n(t)
=
k_{\rm a}(s_n)
\geq
k_{\rm m}
\qquad
\text{for all }t\geq s_n.
\]
The uniform estimate from
Lemma~\ref{lem:HighGainRegularity} therefore gives, for almost every
$t\geq s_n$,
\[
\|\widetilde\eta_n(t)\|_\U
\leq
\frac{M_*}
{1+m_\mu\widetilde k_n(t)}
\leq
\frac{M_*}
{1+m_\mu k_{\rm m}}
<
r_{\rm a}.
\]
Consequently, for every $t>s_n$,
\[
\operatorname*{ess\,sup}_{s\in(s_n,t)}
\|\widetilde\eta_n(s)\|_\U
<
r_{\rm a}.
\]
Thus the set in
\eqref{eq:SupervisorTriggerTime} is empty and
$\tau_n=\infty$. No further gain increase occurs, contradicting the
assumption of infinitely many activations. Hence the supervisor enters the
gain-increase mode only finitely many times.

Since
$0\leq\dot k_{\rm a}\leq\nu$
and the gain is increased only finitely many times,
$k_{\rm a}$ and $\dot k_{\rm a}$ are globally essentially bounded.
Therefore,
$k_{\rm a} \in \Wkp{1,\infty}(\R_{\geq0})$.
\end{proof}

We now obtain the global boundedness result.

\begin{thrm}[Model-free global boundedness]
\label{thm:GlobalBoundedness}
Suppose that
Assumptions~\ref{assum:SystemClass},
\ref{assum:FunnelData},
\ref{assum:SupervisorData},
\ref{assum:InitialValues}, and
\ref{assum:PositionCompatibleDissipativity}
hold.

Then the global supervised closed-loop trajectory from
Theorem~\ref{thm:Main_Thm_Exist_Uniq} satisfies
\[
x
\in
\Lp{\infty}(\R_{\geq0};\X),
\qquad
k_{\rm a}
\in
\Wkp{1,\infty}(\R_{\geq0}),
\]
and
$u,v,e,\dot e,e_{\rm r} \in \Lp{\infty}(\R_{\geq0};\U)$.
Moreover, there exists some $\varepsilon_\eta>0$ such that, for
almost every $t\geq0$,
\begin{equation}
\label{eq:UniformRecursiveFunnelDistance}
\tau_{\rm f}\varphi(t)
\|e_{\rm r}(t)\|_\U
\leq
1-\varepsilon_\eta.
\end{equation}
Together with
\eqref{eq:UniformPositionFunnelDistance}, both normalized errors are
uniformly separated from their respective funnel boundaries.
\end{thrm}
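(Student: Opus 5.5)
Since the supervisor acts only finitely often by Proposition~\ref{prop:FiniteSupervisorActivity}, the gain is eventually constant. We therefore combine three ingredients: Lemma~\ref{lem:HighGainRegularity}, the finite-horizon margin on an initial interval, and the reconstruction formulas \eqref{eq:ReconstructionFormulas}. Proposition~\ref{prop:FiniteSupervisorActivity} already gives $k_{\rm a}\in\Wkp{1,\infty}(\R_{\geq0})$. Let $s_N$ be the beginning of the final holding phase. Then $\tau_N=\infty$ and $k_{\rm a}(t)=k_N$ for all $t\geq s_N$. Theorem~\ref{thm:Main_Thm_Exist_Uniq} gives $e,\dot e,e_{\rm r},v\in\Lp{\infty}$, and Proposition~\ref{prop:GlobalTransformedStateBound} gives $\widehat z\in\Lp{\infty}(\R_{\geq0};\X)$.

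\emph{State bound.} Insert $\widehat z\in\Lp{\infty}$ into the first line of \eqref{eq:ReconstructionFormulas}. The bounded data $u_{\ext}$, $\dot y_{\rm ref}$, $q$, the bound $\|\chi\|_\U\leq\rho_\chi<1$ (so that $\gamma(\chi)$ is bounded), and $\inf\varphi>0$ then yield $x\in\Lp{\infty}(\R_{\geq0};\X)$.

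\emph{Uniform recursive margin.} This is the main step. After $s_N$, no trigger occurs, so the definition \eqref{eq:SupervisorTriggerTime} with $\tau_N=\infty$ gives $\|\eta(t)\|_\U<r_{\rm a}$ for almost every $t\geq s_N$. On the compact interval $[0,s_N]$, estimate \eqref{eq:FiniteHorizonRecursiveFunnelDistance} supplies a margin $\varepsilon_{\eta,s_N}$. Taking $\varepsilon_\eta:=\min\{\varepsilon_{\eta,s_N},1-r_{\rm a}\}$ proves \eqref{eq:UniformRecursiveFunnelDistance}. The key observation is that the supervisor rule itself certifies the tail bound, so the threshold $k_*$ of Lemma~\ref{lem:HighGainRegularity} is not needed here. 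The only point to check is that the essential supremum over $(s_N,t)$ staying below $r_{\rm a}$ for every $t$ gives an almost-everywhere bound on $[s_N,\infty)$; this follows by taking $t\to\infty$ along a sequence.

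\emph{Input bound.} From $\|\eta\|_\U\leq1-\varepsilon_\eta$ almost everywhere we get $\|\gamma(\eta)\|_\U\leq(1-\varepsilon_\eta)/(1-(1-\varepsilon_\eta)^2)$. Together with $k_{\rm a}\in\Lp{\infty}$, $u_{\ext}\in\Lp{\infty}$ and $\inf\varphi>0$, the feedback formula in \eqref{eq:ReconstructionFormulas} gives $u\in\Lp{\infty}(\R_{\geq0};\U)$. Combining this with \eqref{eq:UniformPositionFunnelDistance} completes the proof.
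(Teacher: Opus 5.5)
Your proposal is correct and follows essentially the same route as the paper: finitely many activations via Proposition~\ref{prop:FiniteSupervisorActivity}, and the supervisor rule with $\tau_N=\infty$ bounding $\|\eta\|_\U$ by $r_{\rm a}$ a.e.\ after $s_N$. Combined with the finite-horizon margin on $[0,s_N]$, the feedback law for $u$, and Proposition~\ref{prop:GlobalTransformedStateBound} with the reconstruction formula for $x$, this gives the result. Two minor points: Lemma~\ref{lem:HighGainRegularity} enters only through Proposition~\ref{prop:FiniteSupervisorActivity}, so it need not be listed as an ingredient, and if $s_N=0$ the initial-interval margin is vacuous, so $\varepsilon_\eta:=1-r_{\rm a}$ suffices.
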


\begin{proof}
By
Proposition~\ref{prop:FiniteSupervisorActivity}, there are only
finitely many gain-increase phases. Let $s_N$ denote the beginning of
the final holding phase. Then
$\tau_N=\infty$.
It follows from
\eqref{eq:SupervisorTriggerTime} that, for every $t>s_N$,
\[
\operatorname*{ess\,sup}_{s\in(s_N,t)}
\|\eta(s)\|_\U
<
r_{\rm a}.
\]
Hence
$\|\eta(t)\|_\U\leq r_{\rm a}$ for almost every $t\geq s_N$.
If $s_N>0$, Proposition~\ref{prop:FiniteHorizonRecursiveErrorMargin}
gives some $\rho_N\in(0,1)$ such that
$\|\eta(t)\|_\U\leq\rho_N$ for almost every $t\in[0,s_N]$.
If $s_N=0$, set $\rho_N:=0$.
Therefore,
$\|\eta(t)\|_\U \leq \rho_\eta := \max\{\rho_N,r_{\rm a}\} <1$
for almost every $t\geq0$. This proves
\eqref{eq:UniformRecursiveFunnelDistance} with
$\varepsilon_\eta := 1-\rho_\eta$.
It also gives
$\gamma(\eta) \in \Lp{\infty}(\R_{\geq0};\U)$.

Since
$k_{\rm a}\in\Lp{\infty}(\R_{\geq0})$,
the feedback law
\[
u
=
u_{\ext}
-
\frac{1}{\tau_{\rm f}\varphi}
\left(
k_{\rm a}\eta+\beta\gamma(\eta)
\right)
\]
implies
$u \in \Lp{\infty}(\R_{\geq0};\U)$.

Proposition~\ref{prop:GlobalTransformedStateBound} gives
$\widehat z \in \Lp{\infty}(\R_{\geq0};\X)$.
The reconstruction formula
\[
x
=
\frac{1}{\tau_{\rm f}\varphi}
\left(
\widehat z
-
\tau_{\rm f}Qq\chi
-
\tau_{\rm f}\alpha Q\gamma(\chi)
\right)
+
L_\mu u_{\ext}
+
Q\dot y_{\rm ref}
\]
then yields
$x \in \Lp{\infty}(\R_{\geq0};\X)$,
because $\chi$ remains in the fixed ball
$\overline{\mathcal B_{\rho_\chi}^{\U}(0)}$.
The remaining assertions follow from
Theorem~\ref{thm:Main_Thm_Exist_Uniq}.
\end{proof}

\begin{rmrk}[Model-free character]
\label{rem:GlobalBoundednessInterpretation}
Assumption~\ref{assum:PositionCompatibleDissipativity} is a
qualitative property of the system node. The operator $\Pi$, the constants $\omega_{\rm s}$ and $c_{\rm v}$,
and the chosen right inverse $Q$ enter the sufficient gain level
$k_*$ used in
Lemma~\ref{lem:HighGainRegularity}, but neither this level nor any of
these system quantities enters the controller.

The supervisor uses only the normalized recursive error
$\eta = \tau_{\rm f}\varphi e_{\rm r}$
and the freely chosen parameters
$r_{\rm a}$, $\Delta k$, and $\nu$. It increases the gain until the
closed-loop response no longer reaches the monitoring level. The
proof shows that this occurs after finitely many gain increases,
although the required final gain value is not known in advance.
\end{rmrk}

\section{Example: A boundary-actuated Euler--Bernoulli beam}
\label{sec:ex}

We illustrate
Theorems~\ref{thm:Main_Thm_Exist_Uniq}
and~\ref{thm:GlobalBoundedness}
by means of an Euler--Bernoulli beam actuated by a boundary force at
its right endpoint. The co-located passive output is the endpoint
velocity, whereas the controlled position output is the corresponding
endpoint displacement. We first verify the abstract assumptions and
then present a numerical closed-loop simulation of the supervised
recursive funnel controller.

Let $\ell>0$ and suppose that
$\rho,EI,\rho^{-1},(EI)^{-1}
\in
\Lp{\infty}([0,\ell])$
are positive almost everywhere. We further allow for a bounded
distributed viscous damping coefficient
$d\in\Lp{\infty}([0,\ell])$,
$d\geq0$ almost everywhere. For $t\geq0$ and $\xi\in(0,\ell)$, the
transverse displacement ${\bm w}$ satisfies
\[
\rho(\xi)
\frac{\partial^2{\bm w}}{\partial t^2}(\xi,t)
=
-
\frac{\partial^2}{\partial\xi^2}
\left(
EI(\xi)
\frac{\partial^2{\bm w}}{\partial\xi^2}(\xi,t)
\right)
-
d(\xi)
\frac{\partial{\bm w}}{\partial t}(\xi,t).
\]
The beam is clamped at its left endpoint,
${\bm w}(0,t) = \tfrac{\partial{\bm w}}{\partial\xi}(0,t) = 0$.
At the right endpoint, the bending moment vanishes and the shear force
is used as the input:
\[
\left(
EI\tfrac{\partial^2{\bm w}}{\partial\xi^2}
\right)(\ell,t)
=
0,
\qquad
-
\tfrac{\partial}{\partial\xi}
\left(
EI\tfrac{\partial^2{\bm w}}{\partial\xi^2}
\right)(\ell,t)
=
u(t).
\]
The corresponding passive output is the endpoint velocity
$v(t)
=
\frac{\partial{\bm w}}{\partial t}(\ell,t)$.
We use the curvature
$\bm\kappa
=
{\bm w}_{\xi\xi}$
and the momentum density
$\bm p
=
\rho{\bm w}_t$
as state variables. The state space is
$\X
=
\Lp{2}([0,\ell];\R^2)$,
equipped with the energy inner product induced by
\[
\left\|
\spvek{\bm\kappa}{\bm p}
\right\|_\X^2
=
\int_0^\ell
\left(
EI(\xi)|\bm\kappa(\xi)|^2
+
\rho(\xi)^{-1}|\bm p(\xi)|^2
\right)
\,\dd\xi.
\]
The corresponding velocity-output system node is
$S = \sbvek{A\&B}{C\&D} : \dom(S) \subset \X\times\R \to \X\times\R$,
where
\[
A\&B
\spvek{
\spvek{\bm\kappa}{\bm p}
}{
u
}
=
\spvek{
\displaystyle
\tfrac{\partial^2}{\partial\xi^2}
\bigl(
\rho^{-1}\bm p
\bigr)
}{
\displaystyle
-
\tfrac{\partial^2}{\partial\xi^2}
\bigl(
EI\bm\kappa
\bigr)
-
d\rho^{-1}\bm p
},
\qquad C\&D
\spvek{
\spvek{\bm\kappa}{\bm p}
}{
u
}
=
\bigl(
\rho^{-1}\bm p
\bigr)(\ell).
\]
Its domain is
\begin{equation}
\label{eq:BeamSystemNodeDomain}
\dom(S)
=
\setdef*{
\spvek{
\spvek{\bm\kappa}{\bm p}
}{
u
}
\in
\X\times\R
}{
\begin{array}{l}
EI\bm\kappa,\rho^{-1}\bm p
\in
\Hk{2}([0,\ell]),
\\
(\rho^{-1}\bm p)(0)=0,
\quad
\frac{\partial}{\partial\xi}
(\rho^{-1}\bm p)(0)=0,
\\
(EI\bm\kappa)(\ell)=0,
\quad
-\frac{\partial}{\partial\xi}
(EI\bm\kappa)(\ell)=u
\end{array}
}.
\end{equation}
Here and in the following, derivatives and traces are understood in
the Sobolev sense.

For $d=0$, this is precisely the endpoint case $\xi_0=\ell$ of the
point-actuated beam constructed in
\cite[Sec.~5, case~(b)]{GovHasPauRei2025}. Hence the corresponding
operator is a system node. The additional damping term corresponds to the bounded perturbation
$\spvek{\bm\kappa}{\bm p} \longmapsto \spvek{0}{-d\rho^{-1}\bm p}$
of the state equation. By the bounded perturbation theorem
\cite[Thm.~III.1.3]{EngelNagel2000}, and since the remaining
system-node properties are preserved under this bounded state
perturbation, the damped operator is again a system node with the
same domain.

To verify impedance passivity, set
$\bm v_{\rm b}
:=
\rho^{-1}\bm p$,
$\bm m
:=
EI\bm\kappa$.
For every
$\spvek{ \spvek{\bm\kappa}{\bm p} }{ u } \in\dom(S)$,
integration by parts gives
\begin{multline*}
\scprod{
A\&B
\spvek{
\spvek{\bm\kappa}{\bm p}
}{
u
}
}{
\spvek{\bm\kappa}{\bm p}
}_\X
=
\int_0^\ell
\left(
\bm m\,\bm v_{\rm b}''
-
\bm v_{\rm b}\,\bm m''
-
d|\bm v_{\rm b}|^2
\right)
\,\dd\xi
\\
=
\left[
\bm m\bm v_{\rm b}'
-
\bm m'\bm v_{\rm b}
\right]_0^\ell
-
\int_0^\ell
d(\xi)|\bm v_{\rm b}(\xi)|^2
\,\dd\xi
=
u
\bigl(
\rho^{-1}\bm p
\bigr)(\ell)
-
\int_0^\ell
d(\xi)
\left|
\rho(\xi)^{-1}\bm p(\xi)
\right|^2
\,\dd\xi
\leq
uv.
\end{multline*}
Thus the damped velocity-output system node is impedance passive.

The endpoint displacement is represented by the bounded functional
\begin{equation}
\label{eq:BeamPositionOperator}
\mathcal J_{\rm p}\in\Lb(\X,\R),
\qquad
\mathcal J_{\rm p}
\spvek{\bm\kappa}{\bm p}
:=
\int_0^\ell
(\ell-\xi)\bm\kappa(\xi)
\,\dd\xi.
\end{equation}
Indeed, the clamping conditions imply formally that
$\mathcal J_{\rm p}
\spvek{\bm\kappa}{\bm p}
=
{\bm w}(\ell)$.

Let
\[
\spvek{
\spvek{\bm\kappa}{\bm p}
}{
u
}
\in\dom(S),
\qquad
\spvek{f}{v}
=
\sbvek{A\&B}{C\&D}
\spvek{
\spvek{\bm\kappa}{\bm p}
}{
u
}.
\]
Since the first component of $f$ is
$(\rho^{-1}\bm p)''$, integration by parts and the left boundary
conditions give
\[
\mathcal J_{\rm p}f
=
\int_0^\ell
(\ell-\xi)
\frac{\partial^2}{\partial\xi^2}
\bigl(
\rho^{-1}\bm p
\bigr)(\xi)
\,\dd\xi
=
\bigl(
\rho^{-1}\bm p
\bigr)(\ell)
=
v.
\]
Thus $\mathcal J_{\rm p}f=v$ holds. In particular, along
every strong trajectory,
$\frac{\dd}{\dd t}
\mathcal J_{\rm p}x(t)
=
v(t)$.
Consequently, if the initial position satisfies the physical
compatibility condition
$y_0
=
\mathcal J_{\rm p}x_0$,
then, for every $t\geq0$,
\[
y(t)
=
\mathcal J_{\rm p}x(t)
=
{\bm w}(\ell,t).
\]
It remains to verify the surjectivity of the observation operator.
The main-operator domain is obtained from
\eqref{eq:BeamSystemNodeDomain} by setting $u=0$, and
$C
\spvek{\bm\kappa}{\bm p}
=
\bigl(
\rho^{-1}\bm p
\bigr)(\ell)$.
For $r\in\R$, set
$\bm\kappa_r:=0$,
$\bm p_r(\xi)
:=
\rho(\xi)r\frac{\xi^2}{\ell^2}$,
$\xi\in[0,\ell]$.
Then
$\rho^{-1}\bm p_r = r\frac{\xi^2}{\ell^2} \in \Hk{2}([0,\ell])$,
and
$(\rho^{-1}\bm p_r)(0)
=
\tfrac{\partial}{\partial\xi}
(\rho^{-1}\bm p_r)(0)
=
0$,
$(\rho^{-1}\bm p_r)(\ell)
=
r$.
Since $EI\bm\kappa_r=0$, it follows that
$\spvek{\bm\kappa_r}{\bm p_r}
\in\dom(A)$ and
$C
\spvek{\bm\kappa_r}{\bm p_r}
=
r$.
Thus $C:\dom(A)\to\R$ is surjective, and
Assumption~\ref{assum:SystemClass} is satisfied.

Let the prescribed data and supervisor parameters satisfy
Assumptions~\ref{assum:FunnelData}
and~\ref{assum:SupervisorData}, and set
$e(t):={\bm w}(\ell,t)-y_{\rm ref}(t)$.
The recursive error is
\[
e_{\rm r}(t)
:={}
\frac{\partial{\bm w}}{\partial t}(\ell,t)
-
\dot y_{\rm ref}(t)
+
\left(
\frac{\alpha}
{1-\varphi(t)^2|e(t)|^2}
+
\frac{\dot\varphi(t)}{\varphi(t)}
+
\delta
\right)e(t).
\]
The feedback law becomes
\[
u(t)
=
u_{\ext}(t)
-
k_{\rm a}(t)e_{\rm r}(t)
-
\frac{
\beta e_{\rm r}(t)
}{
1-
\tau_{\rm f}^2\varphi(t)^2
|e_{\rm r}(t)|^2
}.
\]
Suppose that the initial values satisfy
Assumption~\ref{assum:InitialValues} and that the physical compatibility
condition $y_0
=
\mathcal J_{\rm p}x_0$ holds. Then
Theorem~\ref{thm:Main_Thm_Exist_Uniq} yields a unique global
closed-loop trajectory. In particular, there exists some
$\varepsilon_\chi>0$ such that, for every $t\geq0$,
\[
\varphi(t)
\left|
{\bm w}(\ell,t)-y_{\rm ref}(t)
\right|
\leq
1-\varepsilon_\chi.
\]
Furthermore, for every $T>0$, there exists some
$\varepsilon_{\eta,T}>0$ such that, for almost every $t\in[0,T]$,
\[
\tau_{\rm f}\varphi(t)
|e_{\rm r}(t)|
\leq
1-\varepsilon_{\eta,T}.
\]
Moreover,
$e,\dot e,e_{\rm r},v\in\Lp{\infty}(\R_{\geq0})$,
$u\in\Lp{\infty}_{\loc}(\R_{\geq0})$.
\subsection{Global boundedness under distributed damping}

We now verify
Assumption~\ref{assum:PositionCompatibleDissipativity} under a
uniform positivity condition on the distributed damping.

\begin{prpstn}[Strict dissipativity under uniformly positive distributed damping]\label{prop:BeamStrictDissipativity}
Suppose that
there exists some $d_*>0$ such that $d(\xi)\geq d_*$ for almost every
$\xi\in(0,\ell)$. Then the
boundary-actuated Euler--Bernoulli beam satisfies
Assumption~\ref{assum:PositionCompatibleDissipativity} with the
position operator $\mathcal J_{\rm p}$ defined in
\eqref{eq:BeamPositionOperator}.
\end{prpstn}

\begin{proof}
Set
$\psi(\xi)
:=
\frac{\xi^2}{\ell^2}$,
$\xi\in[0,\ell]$,
and define the bounded operator
$\mathcal W:
\Lp{2}(0,\ell)
\to
\Hk{2}(0,\ell)$
by
$(\mathcal W\bm\kappa)(\xi)
:=
\int_0^\xi
(\xi-s)\bm\kappa(s)
\,\dd s
-
\psi(\xi)
\int_0^\ell
(\ell-s)\bm\kappa(s)
\,\dd s$.
Writing
$z:=\mathcal W\bm\kappa$,
one has
$z(0)=z'(0)=z(\ell)=0$
and
$z'' = \bm\kappa - \psi'' \mathcal J_{\rm p} \spvek{\bm\kappa}{\bm p}$.
By the Riesz representation theorem, there exists a bounded
self-adjoint operator $\mathcal R\in\Lb(\X)$ such that, for all
$\spvek{\bm\kappa_j}{\bm p_j}\in\X$, $j=1,2$,
\[
\scprod{
\mathcal R
\spvek{\bm\kappa_1}{\bm p_1}
}{
\spvek{\bm\kappa_2}{\bm p_2}
}_\X
={}
\int_0^\ell
\bm p_1\,
\mathcal W\bm\kappa_2
\,\dd\xi
+
\int_0^\ell
\bm p_2\,
\mathcal W\bm\kappa_1
\,\dd\xi.
\]
For every $\varepsilon>0$ satisfying
$\varepsilon \|\mathcal R\|_{\Lb(\X)} <1$,
the operator
$\Pi_\varepsilon := I_\X+\varepsilon\mathcal R$
is bounded and self-adjoint and, for every $x\in\X$,
\[
\scprod{\Pi_\varepsilon x}{x}_\X
\geq
\left(
1-\varepsilon
\|\mathcal R\|_{\Lb(\X)}
\right)
\|x\|_\X^2.
\]
Let
\[
x
=
\spvek{\bm\kappa}{\bm p},
\qquad
\spvek{f}{v}
=
\sbvek{A\&B}{C\&D}
\spvek{x}{u},
\]
and introduce
$\bm v_{\rm b}
:=
\rho^{-1}\bm p$,
$\bm m
:=
EI\bm\kappa$.
Then
\[
f
=
\spvek{
\bm v_{\rm b}''
}{
-\bm m''-d\bm v_{\rm b}
},
\qquad
v=\bm v_{\rm b}(\ell).
\]
The left boundary conditions give
$\mathcal W(\bm v_{\rm b}'')
=
\bm v_{\rm b}-\psi v$.
Using integration by parts and the boundary conditions for
$\bm m$ and $z$, we obtain
\begin{equation}
\scprod{\Pi_\varepsilon x}{f}_\X
=
uv
-
\int_0^\ell
d|\bm v_{\rm b}|^2
\,\dd\xi
+
\varepsilon
\int_0^\ell
\left(
-
EI|\bm\kappa|^2
+
\rho|\bm v_{\rm b}|^2
+
(\mathcal J_{\rm p}x)\bm m\psi''-
v\rho\psi\bm v_{\rm b}
-
d\bm v_{\rm b}z
\right)
\,\dd\xi.
\label{eq:BeamModifiedEnergyIdentity}
\end{equation}
Set
\[
E_\kappa
:=
\int_0^\ell
EI|\bm\kappa|^2
\,\dd\xi,
\qquad
E_{\rm v}
:=
\int_0^\ell
\rho|\bm v_{\rm b}|^2
\,\dd\xi.
\]
By uniform positivity of the damping,
\[
\int_0^\ell
d|\bm v_{\rm b}|^2
\,\dd\xi
\geq
\delta_{\rm d}E_{\rm v},
\qquad
\delta_{\rm d}
:=
\frac{d_*}
{\|\rho\|_{\Lp{\infty}(0,\ell)}}
>0.
\]
The boundedness of $\mathcal W$, $d$, $\rho$, and $EI$ implies that
there exist constants $c_0,c_1,c_2>0$ such that, for every
$\varepsilon\in(0,1]$,
\[
\varepsilon\left|
\mathcal J_{\rm p}x
\int_0^\ell
\bm m\psi''
\,\dd\xi
\right|
\leq
\frac{\varepsilon}{4}E_\kappa
+
c_1|\mathcal J_{\rm p}x|^2,\quad
\varepsilon
\left|
v
\int_0^\ell
\rho\psi\bm v_{\rm b}
\,\dd\xi
\right|
\leq
\frac{\delta_{\rm d}}{4}E_{\rm v}
+
c_2|v|^2,
\quad
\varepsilon
\left|
\int_0^\ell
d\bm v_{\rm b}z
\,\dd\xi
\right|
\leq
\frac{\varepsilon}{4}E_\kappa
+
\varepsilon c_0E_{\rm v}.
\]
Choose $\varepsilon\in(0,1]$ sufficiently small that
$\varepsilon\|\mathcal R\|_{\Lb(\X)}<1$ and $\varepsilon(1+c_0)
\leq
\frac{\delta_{\rm d}}{4}$.
Then \eqref{eq:BeamModifiedEnergyIdentity} gives
\[
\scprod{\Pi_\varepsilon x}{f}_\X
\leq
uv
-
\frac{\varepsilon}{2}E_\kappa
-
\frac{\delta_{\rm d}}{2}E_{\rm v}
+
c_1|\mathcal J_{\rm p}x|^2
+
c_2|v|^2.
\]
Since
$\|x\|_\X^2
=
E_\kappa+E_{\rm v}$,
Assumption~\ref{assum:PositionCompatibleDissipativity} holds with
$\Pi=\Pi_\varepsilon$,
$\omega_{\rm s}
=
\tfrac12
\min\{
\varepsilon,\delta_{\rm d}
\}$, $c_{\rm p}=c_1$,
$c_{\rm v}=c_2$.
Together with
$\mathcal J_{\rm p}f=v$, this proves the assertion.
\end{proof}
Combining Proposition~\ref{prop:BeamStrictDissipativity} with
Theorem~\ref{thm:GlobalBoundedness} gives the following conclusion.
If $d$ is bounded from below by $d_*>0$, then the internal beam state
and the applied boundary force are globally bounded for arbitrary
parameters
$\alpha,\beta,\delta,\tau_{\rm f},\Delta k,\nu>0$,
$r_{\rm a}\in(0,1)$,
$k_{{\rm a},0}\geq0$,
provided that the initial values satisfy the stated compatibility
conditions. Moreover, the recursive-error funnel margin is uniform
in time. No knowledge of the beam coefficients or of the constants
occurring in the strict dissipation estimate is required for the
implementation of the controller or the supervisor.

\subsection{Numerical simulations}

We finally illustrate the closed-loop behaviour of the supervised
recursive funnel controller. We specialize the boundary-actuated beam
to
$\ell=1$,
$EI\equiv\rho\equiv1$,
$d\equiv3$.
In particular, the distributed damping is uniformly positive, so that
Proposition~\ref{prop:BeamStrictDissipativity} and
Theorem~\ref{thm:GlobalBoundedness} apply.

The controlled output is the endpoint displacement
$y(t)={\bm w}(1,t)$,
whereas the passive output
$v(t)=\tfrac{\partial{\bm w}}{\partial t}(1,t)$
is evaluated directly from the velocity component of the
semidiscrete state. Hence the implementation does not numerically
differentiate the computed position signal.

The reference signal is a smoothed piecewise affine function on
$[0,10]$. It moves successively from $0$ to $1$, then to $-1$, to
$0.7$, to $-0.6$, to $0.9$, to $-0.4$, and finally back to $0$.
The corresponding transition times are
\[
0,\quad
1.3,\quad
2.8,\quad
4.0,\quad
5.4,\quad
6.8,\quad
8.3,\quad
9.5.
\]
Short polynomial transition layers are inserted near the breakpoints
so that the resulting reference belongs to
$\Wkp{3,\infty}(\R_{\geq0})$. Both $y_{\rm ref}$ and
$\dot y_{\rm ref}$ are evaluated analytically.

The position-funnel boundary is prescribed by
\[
\frac{1}{\varphi(t)}
=
0.30+0.60e^{-t/1.5}.
\]
Thus, the admissible position-error half-width decreases from $0.9$
at the initial time to the asymptotic value $0.3$. The derivative
$\dot\varphi$ occurring in the recursive error is also evaluated
analytically. The recursive-error funnel has the boundary
$\frac{1}{\tau_{\rm f}\varphi(t)}$,
and the supervisor monitoring level, expressed in terms of
$e_{\rm r}$, is
$\frac{r_{\rm a}}{\tau_{\rm f}\varphi(t)}$.

We use the controller parameters
$\alpha=1$,
$\beta=1$,
$\delta=0.5$,
$\tau_{\rm f}=0.25$,
and the supervisor parameters
$k_{{\rm a},0}=0$,
$r_{\rm a}=0.8$,
$\Delta k=2$,
$\nu=10$.
Consequently, every activation of the supervisor has duration
$T_{\rm inc}
=
\frac{\Delta k}{\nu}
=
0.2$
and increases the supervised gain by $2$. The initial beam state is zero.
Since
$y_{\rm ref}(0)=\dot y_{\rm ref}(0)=0$,
the initial errors satisfy
$e(0)=\dot e(0)=e_{\rm r}(0)=0$.
The initial controller compatibility condition is therefore satisfied
with
$u_{\ext}\equiv0$.
No input saturation is imposed.

For the spatial discretization, we use an equidistant mesh with
$N=20$ elements. The displacement and velocity are approximated in
the clamped cubic Hermite finite-element space. In the implemented
state-space realization, the displacement degrees of freedom are
replaced by the corresponding slope degrees of freedom, represented
in the associated quadratic derivative space. The resulting semidiscrete
state has dimension
$4N=80$.
The semidiscrete realization preserves the identities
$\dot y=v$ and the co-located energy relation up to machine
precision.

The hybrid closed-loop system is integrated with MATLAB's
variable-step implicit solver \texttt{ode15s}. We use relative
tolerance $10^{-7}$, absolute tolerance $10^{-9}$, and maximal step
size $10^{-2}$. The beginning of each gain-increase phase is located
by event detection when
$|\eta(t)|
=
\tau_{\rm f}\varphi(t)|e_{\rm r}(t)|$
reaches $r_{\rm a}$. Funnel-boundary events are monitored
independently as a numerical safeguard.

In the displayed simulation, one gain-increase phase is sufficient.
The supervised gain starts at zero, increases linearly to $2$ during
an interval of length $0.2$, and then remains constant. Both the
position error and the recursive error remain strictly inside their
respective funnels throughout the simulation.

We also tested finer spatial discretizations. No visually discernible
change of the closed-loop response or of the beam animation was
observed. The main effect of further mesh refinement was a
substantial increase in the sizes of the generated figure and video
files. We therefore use $N=20$ for the results presented below.

The numerical results are shown in
Figure~\ref{fig:position_funnel_simulation}. The upper-left panel
shows the endpoint displacement, the reference signal, and the
position funnel. The lower-left panel displays the boundary force
together with the supervised gain $k_{\rm a}$, using separate
vertical axes. The upper-right panel shows the position error and its
funnel. The lower-right panel displays the recursive error, its funnel
boundary
$\pm1/(\tau_{\rm f}\varphi)$,
and the supervisor monitoring levels
$\pm r_{\rm a}/(\tau_{\rm f}\varphi)$.

\begin{figure}[t]
  \centering
  \IfFileExists{position_funnel_eb_beam_closed_loop.pdf}{%
  \includegraphics[width=\textwidth]{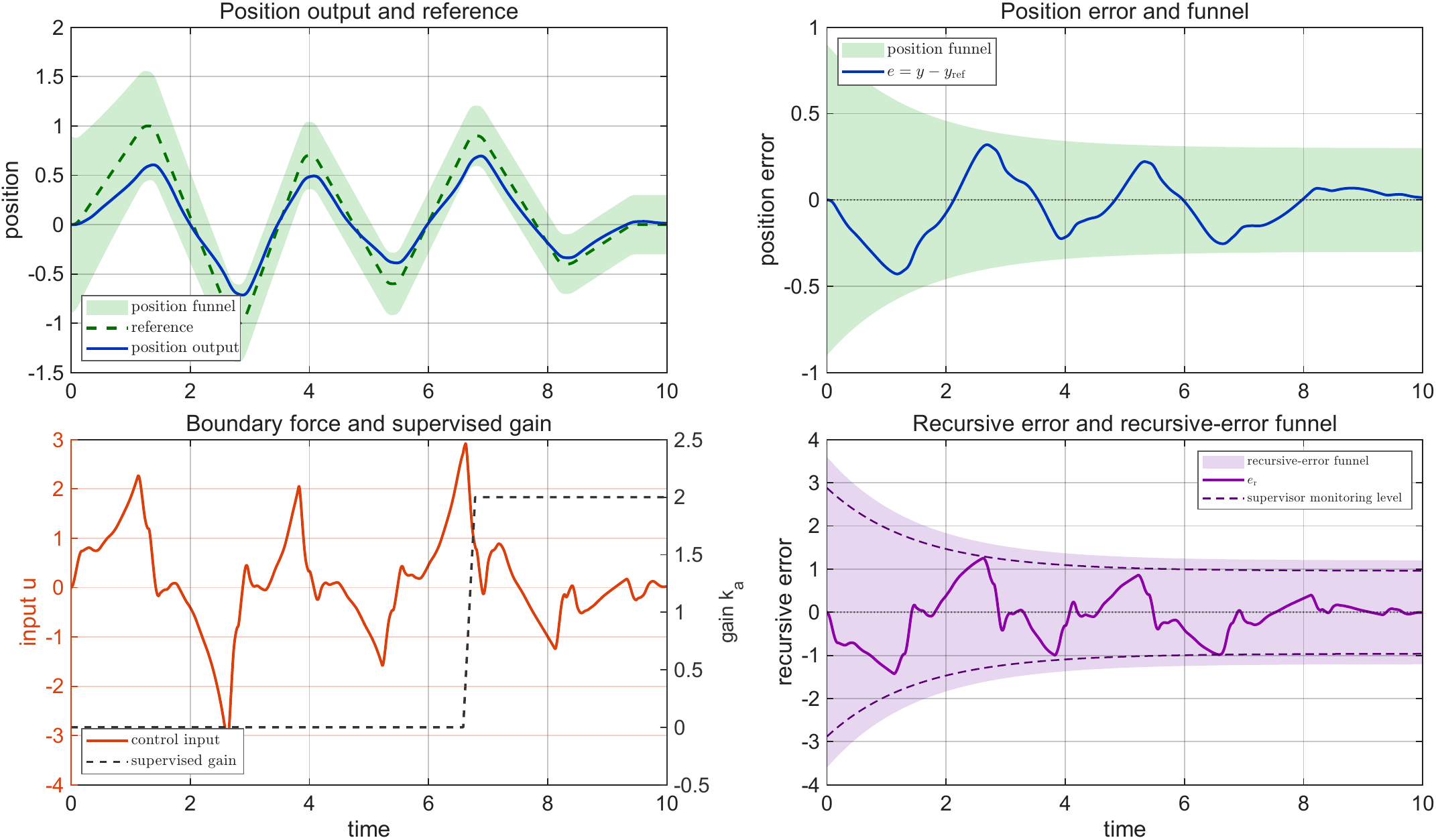}%
}{%
  \fbox{\parbox[c][70mm][c]{0.92\textwidth}{\centering
  Missing file:
  \texttt{position\_funnel\_eb\_beam\_closed\_loop.pdf}}}%
}
  \caption{Numerical closed-loop simulation of the supervised recursive
  funnel controller. Upper left: endpoint position, reference signal, and
  position funnel. Lower left: applied boundary force and supervised gain.
  Upper right: position error and position funnel. Lower right: recursive
  error, recursive-error funnel, and supervisor monitoring level.}
  \label{fig:position_funnel_simulation}
\end{figure}

Figure~\ref{fig:beam_snapshot} shows the deformation of the beam at
$t=7.28$. The displacement used in the visualization is scaled by the
factor $0.38$. The color indicates the relative magnitude of the
discrete curvature, with blue corresponding to small and red to large
bending. The green block marks the reference position, the transparent
green region represents the position funnel, and the vertical arrow
represents the applied boundary force. The snapshot is intended only
as a qualitative illustration of the closed-loop deformation; the
quantitative funnel performance is shown in
Figure~\ref{fig:position_funnel_simulation}. The animation video and
the MATLAB codes used to generate the simulation, plots, animation,
and snapshot are provided as supplementary material.

\begin{figure}[t]
  \centering
  \IfFileExists{position_funnel_eb_beam_snapshot_t728.pdf}{%
  \includegraphics[width=0.5\textwidth]{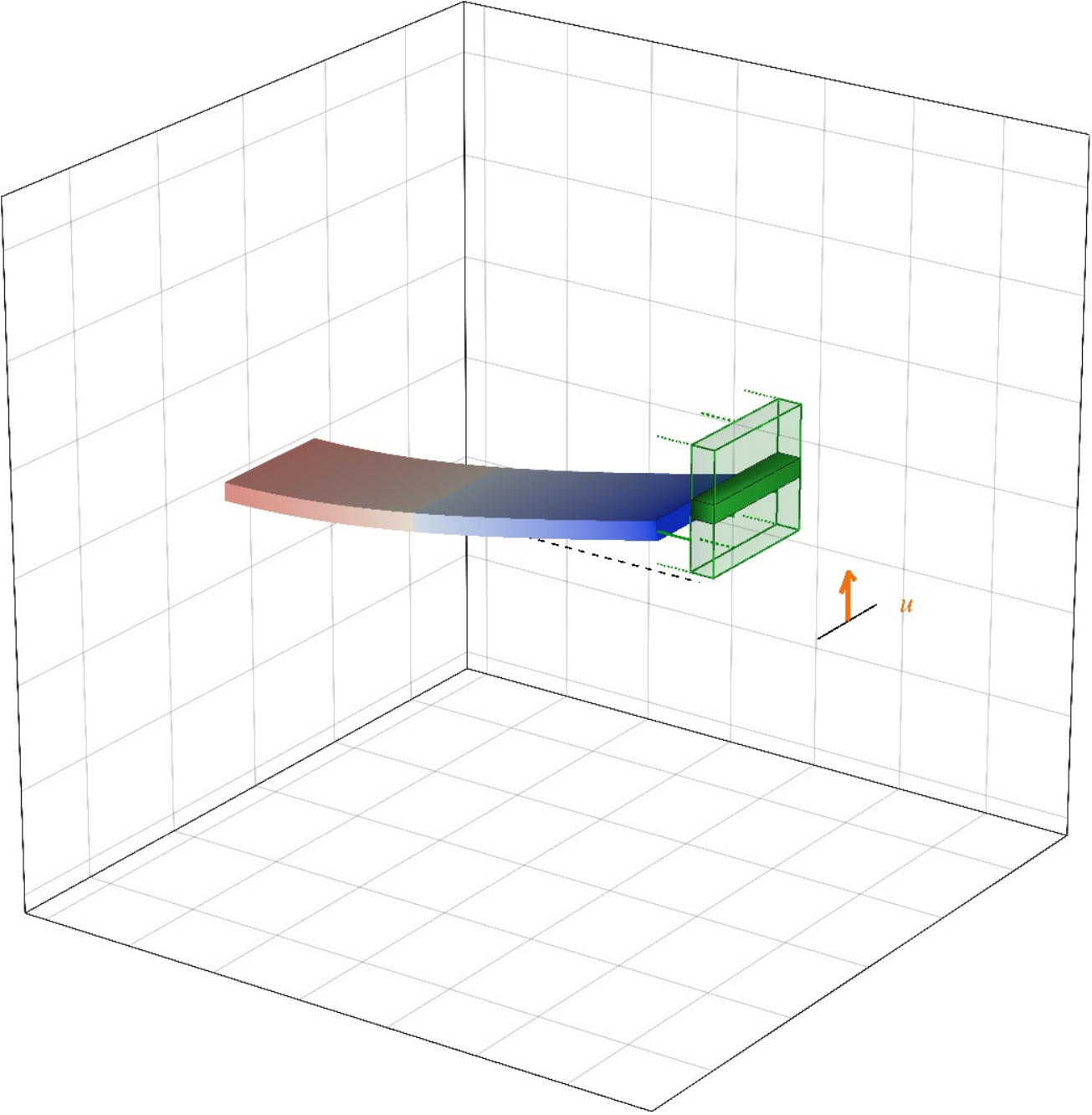}%
}{%
  \fbox{\parbox[c][45mm][c]{0.5\textwidth}{\centering
  Missing file:
  \texttt{position\_funnel\_eb\_beam\_snapshot\_t728.pdf}}}%
}
  \caption{Snapshot of the closed-loop beam animation at $t=7.28$.}
  \label{fig:beam_snapshot}
\end{figure}

\section*{Conflict of interest statement}

The authors declare no conflicts of interest.

\section*{Data availability statement}

The MATLAB codes used for the numerical simulations and the animation video
are provided as Supporting Information. No additional datasets were generated
or analysed in this study.

\section*{Use of generative artificial intelligence}

OpenAI ChatGPT was used to assist with language editing of the manuscript
and with the development and debugging of parts of the numerical
implementation. All mathematical results, proofs, numerical code, and
reported results were independently reviewed and verified by the authors,
who take full responsibility for the content of the manuscript.

\section*{Supporting information}

The MATLAB codes and the animation video described in
Section~\ref{sec:ex} are provided as Supporting Information.

\section{Conclusion}
\label{sec:conclusion}

We have developed a nonlinear prescribed-performance controller for position
tracking in infinite-dimensional impedance-passive systems. The main
difficulty is that the controlled position output is obtained by integrating
the passive velocity output and therefore does not itself form a passive pair
with the control input. A recursive funnel construction combined with a
dynamic gain supervisor overcomes this additional integration while retaining
a model-free feedback structure.

The resulting closed loop admits a unique global trajectory and achieves the
prescribed position-funnel performance for a broad class of distributed and
boundary control systems. Under an additional position-compatible strict
dissipativity condition, the supervisor is activated only finitely many times,
and the internal state, the supervised gain, and the control input remain
globally bounded. The boundary-actuated Euler--Bernoulli beam demonstrates
that these conditions can be verified for a relevant distributed-parameter
mechanical system and that the proposed controller can be implemented using
directly available position and velocity signals.

\appendix
\section{Proofs of auxiliary results}
\label{app:AuxiliaryResults}

\subsection{Proof of Lemma~\ref{lem:StrongMonotonicityGamma}}

\begin{proof}
The map $\gamma$ is continuously Fr\'echet differentiable on
$\mathcal B_1(0)$. For $\eta\in\mathcal B_1(0)$ and $h\in\U$,
\[
\gamma'(\eta)h
=
\frac{h}{1-\|\eta\|_\U^2}
+
\frac{2\scprod{h}{\eta}_\U}
{(1-\|\eta\|_\U^2)^2}\eta.
\]
Consequently,
\[
\scprod{\gamma'(\eta)h}{h}_\U
=
\frac{\|h\|_\U^2}{1-\|\eta\|_\U^2}
+
\frac{2\scprod{h}{\eta}_\U^2}
{(1-\|\eta\|_\U^2)^2}
\geq
\|h\|_\U^2.
\]
Since $\mathcal B_1(0)$ is convex, the line segment joining
$\eta_1$ and $\eta_2$ lies in $\mathcal B_1(0)$. Hence
\[
\scprod{
\gamma(\eta_1)-\gamma(\eta_2)
}{
\eta_1-\eta_2
}_\U
=
\int_0^1
\scprod{
\gamma'\bigl(\eta_2+\vartheta(\eta_1-\eta_2)\bigr)
(\eta_1-\eta_2)
}{
\eta_1-\eta_2
}_\U
\,\dd\vartheta
\geq
\|\eta_1-\eta_2\|_\U^2,
\]
which proves \eqref{eq:StrongMonotonicityGamma}.
The scalar map
$s\longmapsto\frac{s}{1-s^2}$
is a strictly increasing bijection from $[0,1)$ onto
$\R_{\geq0}$. Since $\gamma$ is radial with this radial component,
that is,
$\gamma(s\omega)=\frac{s}{1-s^2}\omega$ for $s\in[0,1)$ and $\|\omega\|_\U=1$,
it is bijective.

For every $\tau>0$, the map
$\eta\longmapsto\eta+\tau\gamma(\eta)$
is radial and its radial component
$s\mapsto
s+\tau\tfrac{s}{1-s^2}$
is a bijection from $[0,1)$ onto $\R_{\geq0}$. Hence
$\im(I_\U+\tau\gamma)=\U$, i.e., $\gamma$ is maximally monotone.

Finally, let $r_j=\gamma(\eta_j)$, $j=1,2$. By
\eqref{eq:StrongMonotonicityGamma},
\[
\|\eta_1-\eta_2\|_\U^2
\leq
\scprod{r_1-r_2}{\eta_1-\eta_2}_\U
\leq
\|r_1-r_2\|_\U\|\eta_1-\eta_2\|_\U.
\]
Thus, $\|\gamma^{-1}(r_1)-\gamma^{-1}(r_2)\|_\U
\leq
\|r_1-r_2\|_\U$.
\end{proof}

\subsection{Proof of Lemma~\ref{lem:ZeroOutputLifting}}

\begin{proof}
Let $p\in\U$. By \eqref{eq:ABform},
\[
\spvek{R_\mu Bp}{p}
\in\dom(S)
\;\text{ and }
\;
A\&B\spvek{R_\mu Bp}{p}
=
\mu R_\mu Bp.
\]
The definition of the transfer operator gives
$\sbvek{A\&B}{C\&D} \spvek{R_\mu Bp}{p} = \spvek{\mu R_\mu Bp}{P(\mu)p}$.
Since $QP(\mu)p\in\dom(A)$ and $CQ=I_\U$, we further have
$\sbvek{A\&B}{C\&D} \spvek{QP(\mu)p}{0} = \spvek{AQP(\mu)p}{P(\mu)p}$.
Subtracting the two identities and using
\eqref{eq:GMuDefinition} proves the
assertion.
\end{proof}

\subsection{Proof of Lemma~\ref{lem:TruncatedBarrier}}

\begin{proof}
Fix $R\in(0,1)$ and set
$R_*:=\frac{1+R}{2}$, $d_R := \frac{2R}{(1-R^2)^2}$.
Define $\vartheta_R:\R_{\geq0}\to\R_{>0}$ by
\[
\vartheta_R(s)
:=
\begin{cases}
\displaystyle
\frac{1}{1-s^2},
&
0\leq s\leq R,
\\[3mm]
\displaystyle
\frac{1}{1-R^2}
+
d_R
\left[
s-R
-
\frac{(s-R)^2}{2(R_*-R)}
\right],
&
R<s<R_*,
\\[3mm]
\displaystyle
\frac{1}{1-R^2}
+
\frac{d_R}{2}(R_*-R),
&
s\geq R_*.
\end{cases}
\]
The function $\vartheta_R$ is continuously differentiable, its
derivative is globally Lipschitz continuous, and
$\vartheta_R(s)\geq1$,
$\vartheta_R'(s)\geq0$
for all $s\geq0$.
Moreover,
$\vartheta_R(s)=\frac{1}{1-s^2}$
for $s\in[0,R]$,
and $\vartheta_R$ is constant on $[R_*,\infty)$.

Define
$\Gamma_R(\xi)
:=
\vartheta_R(\|\xi\|_\U)\xi$,
$\xi\in\U$.
Then
$\Gamma_R(\xi)=\gamma(\xi)$ for all $\xi\in\overline{\mathcal B_R^\U(0)}$.
For $\xi\neq0$ and $h\in\U$, writing $s=\|\xi\|_\U$, we have
\begin{equation}
\label{eq:SmoothTruncationDerivative}
\Gamma_R'(\xi)h
=
\vartheta_R(s)h
+
\frac{\vartheta_R'(s)}{s}
\scprod{\xi}{h}_\U\xi.
\end{equation}
At $\xi=0$ one has
$\Gamma_R'(0)=I_\U$.
Define
\[
b_R(s)
:=
\begin{cases}
\vartheta_R'(s)/s,&s>0,\\
2,&s=0.
\end{cases}
\]
A direct inspection of the explicit definition of $\vartheta_R$
shows that $b_R$ is continuous and piecewise continuously
differentiable with bounded derivative. Moreover,
$b_R(s)=0$ for $s\geq R_*$.
Hence $b_R$ is globally bounded and globally Lipschitz continuous.

Formula~\eqref{eq:SmoothTruncationDerivative} can therefore be
written as
\[
\Gamma_R'(\xi)
=
\vartheta_R(\|\xi\|_\U)I_\U
+
b_R(\|\xi\|_\U)
\scprod{\xi}{\,\cdot\,}_\U\xi.
\]
For all $\xi,\zeta\in\U$, the corresponding rank-one operators satisfy
\[
\left\|
\scprod{\xi}{\,\cdot\,}_\U\xi
-
\scprod{\zeta}{\,\cdot\,}_\U\zeta
\right\|_{\Lb(\U)}
\leq
\bigl(
\|\xi\|_\U+\|\zeta\|_\U
\bigr)
\|\xi-\zeta\|_\U.
\]
Since $b_R$ is globally Lipschitz continuous and vanishes on
$[R_*,\infty)$, this estimate shows that there exists some
$c_R>0$ such that, for all $\xi,\zeta\in\U$,
\[
\|
\Gamma_R'(\xi)-\Gamma_R'(\zeta)
\|_{\Lb(\U)}
\leq
c_R\|\xi-\zeta\|_\U.
\]
The same representation also shows that
$\Gamma_R'$ is globally bounded. In particular, its right-hand side
extends continuously to $\xi=0$. Consequently, $\Gamma_R$ is
continuously Fr\'echet differentiable
on $\U$, and its derivative
$\Gamma_R':\U\to\Lb(\U)$
is globally bounded and globally Lipschitz continuous.

The operator in \eqref{eq:SmoothTruncationDerivative} is
self-adjoint. To make its coercivity explicit, let $\xi\neq0$,
$s=\|\xi\|_\U$, and decompose
\[
h=h_\perp+h_\parallel,
\qquad
h_\parallel
:=
\frac{\scprod{\xi}{h}_\U}{s^2}\xi,
\qquad
h_\perp\perp\xi.
\]
Then \eqref{eq:SmoothTruncationDerivative} gives
\[
\Gamma_R'(\xi)h
=
\vartheta_R(s)h_\perp
+
\bigl(
\vartheta_R(s)+s\vartheta_R'(s)
\bigr)h_\parallel.
\]
Thus,
\[
\scprod{\Gamma_R'(\xi)h}{h}_\U
=
\vartheta_R(s)\|h_\perp\|_\U^2
+
\bigl(
\vartheta_R(s)+s\vartheta_R'(s)
\bigr)
\|h_\parallel\|_\U^2.
\]
Since $\vartheta_R(s)\geq1$ and
$\vartheta_R'(s)\geq0$, both coefficients are at least one, and hence
\[
\scprod{\Gamma_R'(\xi)h}{h}_\U
\geq
\|h_\perp\|_\U^2+\|h_\parallel\|_\U^2
=
\|h\|_\U^2.
\]
For $\xi=0$, the same estimate follows from
$\Gamma_R'(0)=I_\U$. Hence it holds for all $\xi,h\in\U$. Integration along the line segment joining
$\xi_1$ and $\xi_2$ gives
\[
\scprod{
\Gamma_R(\xi_1)-\Gamma_R(\xi_2)
}{
\xi_1-\xi_2
}_\U
\geq
\|\xi_1-\xi_2\|_\U^2.
\]
Thus $\Gamma_R$ is strongly monotone.

Since $\vartheta_R$ is constant on $[R_*,\infty)$,
$\Gamma_R$ is linear outside
$\mathcal B_{R_*}^\U(0)$.
Finally, for $\xi\neq0$,
\begin{align*}
\Gamma_R'(\xi)\xi
&=
\left[
\vartheta_R(\|\xi\|_\U)
+
\|\xi\|_\U\vartheta_R'(\|\xi\|_\U)
\right]\xi,
\\
\Gamma_R'(\xi)\Gamma_R(\xi)
&=
\vartheta_R(\|\xi\|_\U)
\left[
\vartheta_R(\|\xi\|_\U)
+
\|\xi\|_\U\vartheta_R'(\|\xi\|_\U)
\right]\xi.
\end{align*}
The scalar coefficients in these two radial maps are Lipschitz
continuous and constant outside the ball
$\mathcal B_{R_*}^\U(0)$. Hence both maps are globally Lipschitz
continuous. The same is true for $\Gamma_R$ because its derivative is
globally bounded.
\end{proof}

\section{Proof of Proposition~\ref{prop:TruncatedProductOperator}}
\label{app:TruncatedProductOperator}

\begin{proof}
Since $q$ is bounded and $\Gamma_R'$ is globally bounded and globally
Lipschitz continuous, there exists a constant $c_R>0$ such that, for
all $t\geq0$ and $\xi,\xi_1,\xi_2\in\U$,
\[
\begin{aligned}
\|K_R(t,\xi)\|_{\Lb(\U,\X)}
&\leq c_R,
\\
\|K_R(t,\xi_1)-K_R(t,\xi_2)\|_{\Lb(\U,\X)}
&\leq
c_R\|\xi_1-\xi_2\|_\U.
\end{aligned}
\]

We first prove that the elements in
\eqref{eq:ExtendedTruncatedSystemNodeRelation}
are uniquely determined. Suppose that
$(\widehat f_j,\eta_j)$, $j=1,2$, satisfy this relation for the same element
$w=(\widehat z,\chi,k)$.
Applying impedance passivity to the difference of the corresponding
system-node elements gives
\[
0
\leq{}
-\kappa(k)\|\eta_1-\eta_2\|_\U^2
-
\beta
\scprod{
\gamma(\eta_1)-\gamma(\eta_2)
}{
\eta_1-\eta_2
}_\U.
\]
By \eqref{eq:StrongMonotonicityGamma}, this implies
$\eta_1=\eta_2$. The first component of the system node then gives
$\widehat f_1=\widehat f_2$. Hence
$\mathcal A_{R,{\rm a}}(t)$ is
single-valued.

We next establish hypomonotonicity. Let
$w_j=(\widehat z_j,\chi_j,k_j)\in\mathcal D_{\rm a}$,
$j=1,2$, and let $\widehat f_j,\eta_j$ denote the corresponding elements in
\eqref{eq:ExtendedTruncatedSystemNodeRelation}. Write $\Delta$ for
the difference of quantities with indices $1$ and $2$, and set
$\kappa_j:=\kappa(k_j)$.
Impedance passivity gives
\begin{equation}
-\scprod{
\Delta\widehat f
}{
\Delta\widehat z
}_\X
\geq{}
\scprod{
\kappa_1\eta_1-\kappa_2\eta_2
}{
\Delta\eta
}_\U
+
\beta
\scprod{
\gamma(\eta_1)-\gamma(\eta_2)
}{
\Delta\eta
}_\U.
\label{eq:ExtendedOperatorPassivityDifference}
\end{equation}
Since $\kappa$ is nonnegative and Lipschitz continuous with Lipschitz
constant one,
\[
\scprod{
\kappa_1\eta_1-\kappa_2\eta_2
}{
\Delta\eta
}_\U
=
\kappa_1\|\Delta\eta\|_\U^2
+
(\kappa_1-\kappa_2)
\scprod{\eta_2}{\Delta\eta}_\U
\geq
-|k_1-k_2|\,
\|\Delta\eta\|_\U,
\]
where we have used $\|\eta_2\|_\U<1$. Together with
\eqref{eq:StrongMonotonicityGamma},
\eqref{eq:ExtendedOperatorPassivityDifference} therefore yields
\begin{equation}
-\scprod{
\Delta\widehat f
}{
\Delta\widehat z
}_\X
\geq
\beta\|\Delta\eta\|_\U^2
-
|\Delta k|\,\|\Delta\eta\|_\U.
\label{eq:ExtendedOperatorBasicDifferenceEstimate}
\end{equation}
Moreover,
\[
\|
K_R(t,\chi_1)\eta_1
-
K_R(t,\chi_2)\eta_2
\|_\X
\leq
c_R
\left(
\|\Delta\eta\|_\U
+
\|\Delta\chi\|_\U
\right),
\]
because $\|\eta_2\|_\U<1$. Hence
\[
\begin{aligned}
&
\scprod{
\mathcal A_{R,{\rm a}}(t)w_1
-
\mathcal A_{R,{\rm a}}(t)w_2
}{
w_1-w_2
}_{\mathcal H_{\rm a}} =
-\scprod{
\Delta\widehat f
}{
\Delta\widehat z
}_\X
-
\scprod{
K_R(t,\chi_1)\eta_1
-
K_R(t,\chi_2)\eta_2
}{
\Delta\widehat z
}_\X - \tau_{\rm f}^{-1}
\scprod{
\Delta\eta
}{
\Delta\chi
}_\U.
\end{aligned}
\]
Combining this identity with
\eqref{eq:ExtendedOperatorBasicDifferenceEstimate} and applying
Young's inequality gives a constant
$\omega_R\geq0$, independent of $t$, such that
\[
\scprod{
\mathcal A_{R,{\rm a}}(t)w_1
-
\mathcal A_{R,{\rm a}}(t)w_2
}{
w_1-w_2
}_{\mathcal H_{\rm a}}
\geq
-\omega_R
\|w_1-w_2\|_{\mathcal H_{\rm a}}^2.
\]
Thus
$\mathcal A_{R,{\rm a}}(t) + \omega_RI_{\mathcal H_{\rm a}}$
is monotone for every $t\geq0$.

We next derive the two estimates needed in the range
argument: an $O(\lambda_{\rm r}^{-1})$ bound for the observation
resolvent $CR_{\lambda_{\rm r}}Q$ and a coercivity estimate of order
$\lambda_{\rm r}^{-1}$ for the transfer operator
$P(\lambda_{\rm r})$. For $\lambda_{\rm r}\geq1$, set
$R_{\lambda_{\rm r}} := (\lambda_{\rm r} I-A_{-1})^{-1}$.
On $\X$, this is the usual resolvent $(\lambda_{\rm r} I-A)^{-1}$. Since
$Q\U\subset\dom(A)$, the resolvent identity gives
\[
\lambda_{\rm r} R_{\lambda_{\rm r}} Q
=
Q+R_{\lambda_{\rm r}} AQ,
\qquad
AR_{\lambda_{\rm r}} Q
=
R_{\lambda_{\rm r}} AQ.
\]
The contraction-resolvent estimate and the graph-norm boundedness of
$C$ therefore imply that there exists some $c_Q>0$, independent of
$\lambda_{\rm r}$, such that, for every $\lambda_{\rm r}\geq1$,
\begin{equation}
\label{eq:ExtendedResolventObservationEstimate}
\|CR_{\lambda_{\rm r}} Q\|_{\Lb(\U)}
\leq
\frac{c_Q}{\lambda_{\rm r}}.
\end{equation}
Let $m_1>0$ be a coercivity constant in
\eqref{eq:TransferFunctionCoercivity} corresponding to $\mu=1$.
Lemma~\ref{lem:QuantitativeTransferCoercivity} gives, for every
$\lambda_{\rm r}\geq1$ and $u\in\U$,
\begin{equation}
\label{eq:ExtendedTransferCoercivityEstimate}
\scprod{
P(\lambda_{\rm r})u
}{
u
}_\U
\geq
\frac{m_1}{\lambda_{\rm r}}
\|u\|_\U^2.
\end{equation}
For $k\in\R$, define
$\Phi_k:
\mathcal B_1^\U(0)
\to
\U$,
$\Phi_k(\eta)
:=
\kappa(k)\eta+\beta\gamma(\eta)$.
For $\eta_1,\eta_2\in\mathcal B_1^\U(0)$,
\[
\scprod{
\Phi_k(\eta_1)-\Phi_k(\eta_2)
}{
\eta_1-\eta_2
}_\U
=
\kappa(k)\|\eta_1-\eta_2\|_\U^2
+
\beta
\scprod{
\gamma(\eta_1)-\gamma(\eta_2)
}{
\eta_1-\eta_2
}_\U
\geq
\beta\|\eta_1-\eta_2\|_\U^2.
\]
Moreover, $\Phi_k$ is radial, and its radial component is the
strictly increasing bijection
$s \longmapsto \kappa(k)s + \beta\frac{s}{1-s^2}$
from $[0,1)$ onto $\R_{\geq0}$. Hence $\Phi_k$ is bijective. For all
$p_1,p_2\in\U$, its inverse satisfies
\begin{equation}
\label{eq:PhiInverseLipschitz}
\|
\Phi_k^{-1}(p_1)
-
\Phi_k^{-1}(p_2)
\|_\U
\leq
\frac{1}{\beta}
\|p_1-p_2\|_\U.
\end{equation}
We choose $c_R$ large enough so that it also serves as the constant
in \eqref{eq:ExtendedNDifferenceEstimate} below.
Increasing $\omega_R$ if necessary, we may additionally assume that
\begin{equation}
\label{eq:ExtendedOperatorShiftChoice}
\omega_R\geq1,
\qquad
\omega_R
\geq
\frac{c_Q^2c_R^2}{\beta m_1}.
\end{equation}
It remains to verify the range condition for
$\mathcal M_{R,{\rm a}}(t)$. Fix
$\lambda>0$ and
$h=(h_z,h_\chi,h_k)\in\mathcal H_{\rm a}$, and set
\[
\lambda_{\rm r}
:=
\lambda^{-1}+\omega_R,
\qquad
\xi
:=
\frac{h_\chi}{1+\lambda\omega_R},
\qquad
k
:=
\frac{h_k}{1+\lambda\omega_R}.
\]
Then $\lambda_{\rm r}\geq\omega_R\geq1$.
For $p\in\U$, define
\[
\eta(p)
:=
\Phi_k^{-1}(p),
\qquad
\chi(p)
:=
\xi
+
\frac{1}{\lambda_{\rm r}\tau_{\rm f}}\eta(p),
\]
and
\[
\mathcal N_{t,\xi,\lambda_{\rm r},k}(p)
:=
\left(
q(t)I_\U
+
\alpha\Gamma_R'
\bigl(
\chi(p)
\bigr)
\right)
\eta(p).
\]
By
\eqref{eq:PhiInverseLipschitz} and the global boundedness and
Lipschitz continuity of $\Gamma_R'$, for all
$t\geq0$, $\lambda_{\rm r}\geq1$, $k\in\R$, and $p_1,p_2\in\U$,
\begin{equation}
\label{eq:ExtendedNDifferenceEstimate}
\|
\mathcal N_{t,\xi,\lambda_{\rm r},k}(p_1)
-
\mathcal N_{t,\xi,\lambda_{\rm r},k}(p_2)
\|_\U
\leq
c_R
\|
\eta(p_1)-\eta(p_2)
\|_\U.
\end{equation}
Define
$\mathcal T_{t,\xi,\lambda_{\rm r},k}:\U\to\U$
by
\[
\mathcal T_{t,\xi,\lambda_{\rm r},k}(p)
:={}
\Phi_k^{-1}(p)
+
P(\lambda_{\rm r})p
-
CR_{\lambda_{\rm r}} Q
\mathcal N_{t,\xi,\lambda_{\rm r},k}(p).
\]
This operator is everywhere defined and continuous. Let
$p_1,p_2\in\U$, and set
\[
\eta_j
:=
\Phi_k^{-1}(p_j),
\qquad
\Delta p:=p_1-p_2,
\qquad
\Delta\eta:=\eta_1-\eta_2.
\]
Since
$p_j=\Phi_k(\eta_j)$,
the strong monotonicity of $\Phi_k$ gives
$\scprod{ \Delta\eta }{ \Delta p }_\U \geq \beta\|\Delta\eta\|_\U^2$.
Using
\eqref{eq:ExtendedResolventObservationEstimate},
\eqref{eq:ExtendedTransferCoercivityEstimate}, and
\eqref{eq:ExtendedNDifferenceEstimate}, we obtain
\begin{multline*}
\scprod{
\mathcal T_{t,\xi,\lambda_{\rm r},k}(p_1)
-
\mathcal T_{t,\xi,\lambda_{\rm r},k}(p_2)
}{
\Delta p
}_\U
\geq
\beta\|\Delta\eta\|_\U^2
+
\frac{m_1}{\lambda_{\rm r}}
\|\Delta p\|_\U^2
-
\frac{c_Qc_R}{\lambda_{\rm r}}
\|\Delta\eta\|_\U
\|\Delta p\|_\U
\\
\geq
\frac{\beta}{2}
\|\Delta\eta\|_\U^2
+
\left(
\frac{m_1}{\lambda_{\rm r}}
-
\frac{c_Q^2c_R^2}
{2\beta\lambda_{\rm r}^2}
\right)
\|\Delta p\|_\U^2
\geq
\frac{m_1}{2\lambda_{\rm r}}
\|\Delta p\|_\U^2,
\end{multline*}
where the last inequality follows from
\eqref{eq:ExtendedOperatorShiftChoice}. Hence
$\mathcal T_{t,\xi,\lambda_{\rm r},k} - \frac{m_1}{2\lambda_{\rm r}}I_\U$
is an everywhere defined continuous monotone operator and therefore
maximally monotone. Since
\[
\mathcal T_{t,\xi,\lambda_{\rm r},k}
-
\frac{m_1}{2\lambda_{\rm r}}I_\U
\]
is maximally monotone, Minty's range condition, applied with
parameter $2\lambda_{\rm r}/m_1$, gives
\[
\U
=
\im
\left[
I_\U
+
\frac{2\lambda_{\rm r}}{m_1}
\left(
\mathcal T_{t,\xi,\lambda_{\rm r},k}
-
\frac{m_1}{2\lambda_{\rm r}}I_\U
\right)
\right]
=
\im
\left(
\frac{2\lambda_{\rm r}}{m_1}
\mathcal T_{t,\xi,\lambda_{\rm r},k}
\right).
\]
Consequently,
$\im \mathcal T_{t,\xi,\lambda_{\rm r},k} = \U$.
The preceding estimate also gives injectivity.

Consequently, there exists a unique $p\in\U$ such that
\begin{equation}
\label{eq:ExtendedReducedResolventEquation}
\mathcal T_{t,\xi,\lambda_{\rm r},k}(p)
=
CR_{\lambda_{\rm r}}
\bigl(
\lambda^{-1}h_z
\bigr).
\end{equation}
Set
\[
\eta
:=
\Phi_k^{-1}(p),
\qquad
\chi
:=
\xi+\frac{1}{\lambda_{\rm r}\tau_{\rm f}}\eta,
\]
$n := \mathcal N_{t,\xi,\lambda_{\rm r},k}(p)$,
and define
$\widehat z := R_{\lambda_{\rm r}} \left( \lambda^{-1}h_z + Qn - Bp \right)$,
$\widehat f := \lambda_{\rm r}\widehat z - \lambda^{-1}h_z - Qn$.
Then
$A_{-1}\widehat z-Bp = \widehat f$.
Moreover,
\eqref{eq:ExtendedReducedResolventEquation} gives
$\eta = C\&D \spvek{ \widehat z }{ -p }$.
Since
$p = \Phi_k(\eta) = \kappa(k)\eta+\beta\gamma(\eta)$,
we obtain
\[
\spvek{
\widehat f
}{
\eta
}
=
\sbvek{A\&B}{C\&D}
\spvek{
\widehat z
}{
-\kappa(k)\eta-\beta\gamma(\eta)
}.
\]
Thus
$w:=(\widehat z,\chi,k)\in\mathcal D_{\rm a}$.

By construction,
$h_k = (1+\lambda\omega_R)k$ and
$h_\chi = (1+\lambda\omega_R)\chi - \lambda\tau_{\rm f}^{-1}\eta$.
Moreover, the definition of $\widehat f$ gives
\[
\lambda\widehat f
=
(1+\lambda\omega_R)\widehat z
-
h_z
-
\lambda Qn.
\]
Since
\[
Qn
=
Q\mathcal N_{t,\xi,\lambda_{\rm r},k}(p)
=
K_R(t,\chi)\eta,
\]
we obtain
\[
h_z
=
(1+\lambda\omega_R)\widehat z
-
\lambda\widehat f
-
\lambda K_R(t,\chi)\eta.
\]
Consequently,
$h = w + \lambda \mathcal M_{R,{\rm a}}(t)w$.
Hence
\[
\im
\bigl(
I_{\mathcal H_{\rm a}}
+
\lambda\mathcal M_{R,{\rm a}}(t)
\bigr)
=
\mathcal H_{\rm a}.
\]
Therefore,
$\mathcal M_{R,{\rm a}}(t)$ is maximally monotone.
\end{proof}
\section{Proof of Proposition~\ref{prop:TruncatedWellPosedness}}
\label{app:TruncatedEvolutionEquation}

\begin{proof}
After translating time and relabelling the shifted coefficient
functions, it suffices to consider $t_0=0$. For notational
simplicity, set
\[
\mathcal H:=\mathcal H_{\rm a},
\qquad
\mathcal D:=\mathcal D_{\rm a},
\qquad
\mathcal M_R(t):=\mathcal M_{R,{\rm a}}(t),
\qquad
g(t):=g_\sigma(t).
\]
Define
$\mathcal G_R(t,w) := \mathcal F_{R,{\rm a}}(t,w)+\omega_Rw$.
By Lemma~\ref{lem:TruncatedBarrier} and the boundedness and Lipschitz
continuity of the scalar coefficient functions, there exists a constant
$L_R>0$ such that, for all $t\geq0$ and $w_1,w_2\in\mathcal H$,
\begin{equation}
\label{eq:TotalPerturbationStateLipschitz}
\|\mathcal G_R(t,w_1)-\mathcal G_R(t,w_2)\|_{\mathcal H}
\leq
L_R\|w_1-w_2\|_{\mathcal H},
\end{equation}
and, for all $s,t\geq0$ and $w\in\mathcal H$,
\[
\|\mathcal G_R(t,w)-\mathcal G_R(s,w)\|_{\mathcal H}
\leq
L_R|t-s|\|w\|_{\mathcal H}.
\]
Moreover, $\mathcal G_R(t,0)=0$ for every $t\geq0$.
Proposition~\ref{prop:TruncatedProductOperator} shows that
$\mathcal M_R(t)$ is maximally monotone, and hence $m$-accretive, for
every $t\geq0$.

We first verify condition~{\rm(j)} of
\cite[Thm.~4.19]{Barbu2010}. For $\lambda>0$, let
\[
J_{\lambda,R}(t)
:=
\bigl(I_{\mathcal H}+\lambda\mathcal M_R(t)\bigr)^{-1},
\qquad
\mathcal M_{R,\lambda}(t)
:=
\frac{1}{\lambda}\bigl(I_{\mathcal H}-J_{\lambda,R}(t)\bigr)
\]
be the resolvent and the Yosida approximation of $\mathcal M_R(t)$.
Fix $y\in\mathcal H$ and set $w_t:=J_{\lambda,R}(t)y$. Then
\[
y=w_t+\lambda\mathcal M_{R,\lambda}(t)y,
\qquad
\mathcal M_{R,\lambda}(t)y=\mathcal M_R(t)w_t.
\]
Since the operators $\mathcal M_R(t)$ have the common domain
$\mathcal D$, $w_t\in\mathcal D$ and
\[
w_t
=
J_{\lambda,R}(s)
\left(
w_t+\lambda\mathcal M_R(s)w_t
\right),
\qquad
w_s
=
J_{\lambda,R}(s)y
=
J_{\lambda,R}(s)
\left(
w_t+\lambda\mathcal M_R(t)w_t
\right).
\]
Hence, by the nonexpansiveness of $J_{\lambda,R}(s)$,
\[
\|w_t-w_s\|_{\mathcal H}
\leq
\lambda
\|\bigl(\mathcal M_R(s)-\mathcal M_R(t)\bigr)w_t\|_{\mathcal H}.
\]
Consequently,
\[
\|\mathcal M_{R,\lambda}(t)y
-\mathcal M_{R,\lambda}(s)y\|_{\mathcal H}
\leq
\|\bigl(\mathcal M_R(s)-\mathcal M_R(t)\bigr)w_t\|_{\mathcal H}.
\]

Let $c_q$ be a Lipschitz constant of the mapping
$t\mapsto q(t)Q$. For
$w=(\widehat z,\chi,k)\in\mathcal D$, let
$\eta\in\mathcal B_1^\U(0)$ be the uniquely determined element in
\eqref{eq:ExtendedTruncatedSystemNodeRelation}. This element is independent of
time, and the explicit time dependence of $\mathcal M_R(t)$ occurs
only through $q$. Hence
\[
\bigl(\mathcal M_R(t)-\mathcal M_R(s)\bigr)w
=
\begin{pmatrix}
-(q(t)-q(s))Q\eta\\
0\\
0
\end{pmatrix}.
\]
Since $\|\eta\|_\U<1$, we obtain, for all
$w\in\mathcal D$ and $s,t\geq0$,
\begin{equation}
\label{eq:ProductOperatorTimeLipschitz}
\|\mathcal M_R(t)w-\mathcal M_R(s)w\|_{\mathcal H}
\leq
c_q|t-s|.
\end{equation}
It follows that
\[
\|\mathcal M_{R,\lambda}(t)y
-\mathcal M_{R,\lambda}(s)y\|_{\mathcal H}
\leq
c_q|t-s|
\left(
\|\mathcal M_{R,\lambda}(t)y\|_{\mathcal H}
+
\|y\|_{\mathcal H}
+
1
\right).
\]
Thus $\{\mathcal M_R(t)\}_{t\geq0}$ satisfies condition~{\rm(j)} of
\cite[Thm.~4.19]{Barbu2010}.

Fix $T>0$. We now solve
$\dot w(t)+\mathcal M_R(t)w(t)=\mathcal G_R(t,w(t))+g(t)$ with $w(0)=w_0$
on $[0,T]$ by successive approximation. Set
$w^{(0)}(t):=w_0$ on $[0,T]$.
Given an iterate $w^{(n)}$, the mapping
$t\mapsto\mathcal G_R(t,w^{(n)}(t))+g(t)$ belongs to
$\Wkp{1,1}([0,T];\mathcal H)$. Hence
\cite[Thm.~4.19]{Barbu2010} yields a unique next iterate
$w^{(n+1)}\in\Wkp{1,\infty}([0,T];\mathcal H)$ satisfying
\begin{equation}
\label{eq:TruncatedPicardIteration}
\begin{aligned}
\dot w^{(n+1)}(t)+\mathcal M_R(t)w^{(n+1)}(t)
&=
\mathcal G_R\bigl(t,w^{(n)}(t)\bigr)+g(t),
\\
w^{(n+1)}(0)&=w_0.
\end{aligned}
\end{equation}
For $n\geq1$, subtracting the equations
\eqref{eq:TruncatedPicardIteration} for the iterates
$w^{(n+1)}$ and $w^{(n)}$ gives
\[
\begin{aligned}
\frac{\dd}{\dd t}
\bigl(
w^{(n+1)}-w^{(n)}
\bigr)
&+
\mathcal M_R(t)w^{(n+1)}
-
\mathcal M_R(t)w^{(n)} =
\mathcal G_R\bigl(t,w^{(n)}\bigr)
-
\mathcal G_R\bigl(t,w^{(n-1)}\bigr).
\end{aligned}
\]
Taking the inner product with
$w^{(n+1)}-w^{(n)}$ and using the accretivity of
$\mathcal M_R(t)$ yields, for almost every $t\in[0,T]$,
\[
\frac{\dd}{\dd t}
\|w^{(n+1)}-w^{(n)}\|_{\mathcal H}
\leq
\left\|
\mathcal G_R\bigl(t,w^{(n)}\bigr)
-
\mathcal G_R\bigl(t,w^{(n-1)}\bigr)
\right\|_{\mathcal H}.
\]
Since all iterates have the same initial value, integration and
\eqref{eq:TotalPerturbationStateLipschitz} give
\[
\|w^{(n+1)}(t)-w^{(n)}(t)\|_{\mathcal H}
\leq
L_R
\int_0^t
\|w^{(n)}(s)-w^{(n-1)}(s)\|_{\mathcal H}
\,\dd s.
\]
Iteration gives
\[
\sup_{t\in[0,T]}
\|w^{(n+1)}(t)-w^{(n)}(t)\|_{\mathcal H}
\leq
\frac{(L_RT)^n}{n!}
\sup_{t\in[0,T]}
\|w^{(1)}(t)-w^{(0)}(t)\|_{\mathcal H}.
\]
Thus $(w^{(n)})_{n\in\N}$ converges uniformly on $[0,T]$ to some
$w_R\in\mathrm{C}([0,T];\mathcal H)$.

It remains to show that the limit has the asserted strong regularity.
For this purpose, let $f\in\Wkp{1,1}([0,T];\mathcal H)$ and let
$u\in\Wkp{1,\infty}([0,T];\mathcal H)$ solve
$\dot u(t)+\mathcal M_R(t)u(t)=f(t)$ with $u(0)=w_0$.
For $h\in(0,T)$, set
\[
\Delta_hu(t):=\frac{u(t+h)-u(t)}{h},
\qquad
\Delta_hf(t):=\frac{f(t+h)-f(t)}{h}.
\]
Subtracting the equation
$\dot u+\mathcal M_R(\cdot)u=f$ at times $t+h$ and $t$, and inserting
and subtracting $\mathcal M_R(t+h)u(t)$, gives
\begin{multline*}
\frac{\dd}{\dd t}\bigl(u(t+h)-u(t)\bigr)
+
\mathcal M_R(t+h)u(t+h)
-
\mathcal M_R(t+h)u(t)
\\
=
f(t+h)-f(t)
-
\bigl(\mathcal M_R(t+h)u(t)-\mathcal M_R(t)u(t)\bigr)
\end{multline*}
almost everywhere on $[0,T-h]$. Monotonicity and
\eqref{eq:ProductOperatorTimeLipschitz} yield, almost everywhere on
$[0,T-h]$,
\[
\frac{\dd}{\dd t}\|\Delta_hu(t)\|_{\mathcal H}
\leq
\|\Delta_hf(t)\|_{\mathcal H}+c_q.
\]
To estimate the initial difference quotient, compare $u$
with the constant function $t\mapsto w_0$. By accretivity of
$\mathcal M_R(t)$,
\[
\frac{\dd}{\dd t}
\|u(t)-w_0\|_{\mathcal H}
\leq
\|f(t)-\mathcal M_R(t)w_0\|_{\mathcal H}
\]
for almost every $t\in[0,T]$. Moreover,
\eqref{eq:ProductOperatorTimeLipschitz} yields
\[
\|\mathcal M_R(t)w_0\|_{\mathcal H}
\leq
\|\mathcal M_R(0)w_0\|_{\mathcal H}
+
c_qt.
\]
Integrating from $0$ to $h$ and using $u(0)=w_0$ therefore gives
\[
\|\Delta_hu(0)\|_{\mathcal H}
=
\frac{\|u(h)-w_0\|_{\mathcal H}}{h}
\leq
\|f\|_{\Lp{\infty}([0,T];\mathcal H)}
+
\|\mathcal M_R(0)w_0\|_{\mathcal H}
+
c_qT.
\]
Moreover, absolute continuity of $f$ and Fubini's theorem imply
\[
\int_0^{T-h}\|\Delta_hf(s)\|_{\mathcal H}\,\dd s
\leq
\|\dot f\|_{\Lp{1}([0,T];\mathcal H)}.
\]
Combining these estimates and letting $h\to0$ gives
\begin{equation}
\label{eq:LinearDerivativeEstimate}
\|\dot u\|_{\Lp{\infty}([0,T];\mathcal H)}
\leq
\|\mathcal M_R(0)w_0\|_{\mathcal H}
+
\|f\|_{\Lp{\infty}([0,T];\mathcal H)}
+
\|\dot f\|_{\Lp{1}([0,T];\mathcal H)}
+
2c_qT.
\end{equation}

The uniform convergence of the Picard iterates implies that they are
uniformly bounded in $\mathrm{C}([0,T];\mathcal H)$. Applying
\eqref{eq:LinearDerivativeEstimate} to
\eqref{eq:TruncatedPicardIteration} on $[0,t]$ and using the two
Lipschitz estimates for $\mathcal G_R$, we obtain a constant $c>0$,
independent of $n$, such that, for every $t\in[0,T]$,
\[
\|\dot w^{(n+1)}\|_{\Lp{\infty}([0,t];\mathcal H)}
\leq
c
+
c\int_0^t
\|\dot w^{(n)}\|_{\Lp{\infty}([0,s];\mathcal H)}
\,\dd s.
\]
Since $w^{(0)}$ is constant, induction yields, for every $n\in\N$ and
$t\in[0,T]$,
\[
\|\dot w^{(n)}\|_{\Lp{\infty}([0,t];\mathcal H)}
\leq
ce^{ct}.
\]
Hence the iterates are uniformly Lipschitz continuous, and their
uniform limit satisfies
$w_R\in\Wkp{1,\infty}([0,T];\mathcal H)$.

The mapping $t\mapsto\mathcal G_R(t,w_R(t))+g(t)$ then belongs to
$\Wkp{1,1}([0,T];\mathcal H)$. Let $\widetilde w_R$ be
the strong solution supplied by \cite[Thm.~4.19]{Barbu2010} of the
equation obtained from \eqref{eq:TruncatedEvolutionEquation} by
keeping $w_R$ fixed in the right-hand side, that is,
\[
\dot{\widetilde w}_R(t)
+
\mathcal M_R(t)\widetilde w_R(t)
=
\mathcal G_R\bigl(t,w_R(t)\bigr)+g(t),
\qquad
\widetilde w_R(0)=w_0.
\] Continuous dependence and
\eqref{eq:TotalPerturbationStateLipschitz} give
\[
\|\widetilde w_R(t)-w^{(n+1)}(t)\|_{\mathcal H}
\leq
L_R\int_0^t
\|w_R(s)-w^{(n)}(s)\|_{\mathcal H}\,\dd s.
\]
Letting $n\to\infty$ shows that $\widetilde w_R=w_R$. Thus $w_R$ is
a strong solution of \eqref{eq:TruncatedEvolutionEquation}.

If $w_{R,1}$ and $w_{R,2}$ are two strong solutions with the same
initial value, accretivity and
\eqref{eq:TotalPerturbationStateLipschitz} give
\[
\|w_{R,1}(t)-w_{R,2}(t)\|_{\mathcal H}
\leq
L_R\int_0^t
\|w_{R,1}(s)-w_{R,2}(s)\|_{\mathcal H}\,\dd s.
\]
Gronwall's inequality yields $w_{R,1}=w_{R,2}$.

Finally, rearranging \eqref{eq:TruncatedEvolutionEquation} gives, for
almost every $t\in[0,T]$,
\[
\mathcal M_R(t)w_R(t)
=
\mathcal G_R\bigl(t,w_R(t)\bigr)+g(t)-\dot w_R(t).
\]
All terms on the right-hand side belong to
$\Lp{\infty}([0,T];\mathcal H)$. Hence
$\mathcal M_R(\cdot)w_R(\cdot)
\in
\Lp{\infty}([0,T];\mathcal H)$.
Since
\[\mathcal A_{R,{\rm a}}(t)w_R(t) = \mathcal M_R(t)w_R(t)-\omega_Rw_R(t),\]
we conclude that
$\mathcal A_{R,{\rm a}}(\cdot)w_R(\cdot)\in
\Lp{\infty}([0,T];\mathcal H)$.
\end{proof}

\section{Proof of Proposition~\ref{prop:ClosedLoopTransformation}}
\label{app:ClosedLoopTransformation}

\begin{proof}
We first prove the forward implication. Let
$[a,b]\subset\mathcal I$ be compact. Since $\chi([a,b])$ is a
compact subset of $\mathcal B_1^\U(0)$, the derivative $\gamma'$ is
bounded on this set. Consequently, the relations
\[
\gamma(\chi)
\in
\Wkp{1,\infty}([a,b];\U),
\qquad
\frac{\dd}{\dd t}\gamma(\chi)
=
\gamma'(\chi)\dot\chi
\]
hold almost everywhere on $[a,b]$. It follows from
\eqref{eq:TransformedState} and the regularity of the prescribed data
that
$\widehat z \in \Wkp{1,\infty}([a,b];\X)$.
Since $[a,b]$ was arbitrary,
$\widehat z \in \Wkp{1,\infty}_{\loc}(\mathcal I;\X)$.

The normalized controller relation is
$\tau_{\rm f}\varphi
\bigl(
u-u_{\ext}
\bigr)
=
-k_{\rm a}\eta-\beta\gamma(\eta)$.
Hence
\eqref{eq:TransformationDomainIdentity}, the linearity of
$\dom(S)$, Lemma~\ref{lem:ZeroOutputLifting}, and
$Q\U\subset\dom(A)$ show that
$\spvek{ \widehat z }{ -k_{\rm a}\eta-\beta\gamma(\eta) } \in\dom(S)$
almost everywhere on $\mathcal I$.

Define
\begin{equation}
\label{eq:ForwardTransformedF}
\widehat f
:={}
\tau_{\rm f}\varphi\dot x
-
\tau_{\rm f}\varphi G_\mu u_{\ext}
-
\tau_{\rm f}\varphi AQ\dot y_{\rm ref}
+
\tau_{\rm f}qAQ\chi
+
\tau_{\rm f}\alpha AQ\gamma(\chi).
\end{equation}
Then
$\widehat f \in \Lp{\infty}_{\loc}(\mathcal I;\X)$.
Applying the system node to
\eqref{eq:TransformationDomainIdentity} gives
\[
\sbvek{A\&B}{C\&D}
\spvek{
\widehat z
}{
-k_{\rm a}\eta-\beta\gamma(\eta)
}
=
\spvek{
\widehat f
}{
\tau_{\rm f}\varphi
\bigl(
v-\dot y_{\rm ref}
\bigr)
+
\tau_{\rm f}q\chi
+
\tau_{\rm f}\alpha\gamma(\chi)
}.
\]
By \eqref{eq:NormalizedVelocityRelation}, the second component on the
right-hand side equals $\eta$. Hence
\[
\spvek{
\widehat f
}{
\eta
}
=
\sbvek{A\&B}{C\&D}
\spvek{
\widehat z
}{
-k_{\rm a}\eta-\beta\gamma(\eta)
}
\]
almost everywhere on $\mathcal I$.

Differentiating \eqref{eq:TransformedState}, using
$\frac{\dd}{\dd t}\gamma(\chi) = \gamma'(\chi)\dot\chi$,
and substituting
\eqref{eq:ChiDynamics},
\eqref{eq:ForwardTransformedF}, and
the definition of $q$
gives the first equation in
\eqref{eq:TransformedClosedLoop}. Together with
\eqref{eq:ChiDynamics} and the preceding system-node relation, this
proves the forward implication.

We now prove the converse implication. Again let
$[a,b]\subset\mathcal I$ be compact. Since $\chi([a,b])$ is a
compact subset of $\mathcal B_1^\U(0)$, the derivative $\gamma'$ is
bounded on this set. Consequently,
$\gamma(\chi) \in \Wkp{1,\infty}([a,b];\U)$.

The resolvent representation of the system-node relation gives
\begin{equation}
\label{eq:EquivalenceTransferIdentity}
P(\mu)
\left(
k_{\rm a}\eta+\beta\gamma(\eta)
\right)
+
\eta
=
C(\mu I-A)^{-1}
\bigl(
\mu\widehat z-\widehat f
\bigr).
\end{equation}
The right-hand side belongs to
$\Lp{\infty}([a,b];\U)$. Hence there exists some $c_{a,b}>0$ such
that its norm is bounded by $c_{a,b}$ almost everywhere.

Since
$\eta = \bigl( 1-\|\eta\|_\U^2 \bigr)\gamma(\eta)$,
one has
\[
\scprod{
P(\mu)\eta
}{
\gamma(\eta)
}_\U
=
\bigl(
1-\|\eta\|_\U^2
\bigr)
\scprod{
P(\mu)\gamma(\eta)
}{
\gamma(\eta)
}_\U
\geq0.
\]
Taking the inner product of the left-hand side of
\eqref{eq:EquivalenceTransferIdentity} with $\gamma(\eta)$, estimating
the right-hand side accordingly, and using
$k_{\rm a}\geq0$ and
\eqref{eq:TransferFunctionCoercivity}, we obtain
$\beta m_\mu \|\gamma(\eta)\|_\U^2 \leq c_{a,b} \|\gamma(\eta)\|_\U$
almost everywhere on $[a,b]$. Therefore,
$\gamma(\eta) \in \Lp{\infty}([a,b];\U)$.
Since $[a,b]$ was arbitrary,
$\gamma(\eta) \in \Lp{\infty}_{\loc}(\mathcal I;\U)$.

The reconstruction formulas
\eqref{eq:ReconstructionFormulas} now imply
$x
\in
\Wkp{1,\infty}_{\loc}(\mathcal I;\X)$,
$u,v
\in
\Lp{\infty}_{\loc}(\mathcal I;\U)$,
as well as
$y \in \Wkp{1,\infty}_{\loc}(\mathcal I;\U)$.
Indeed, the additional term $k_{\rm a}\eta$ in the formula for $u$
is locally essentially bounded because
$k_{\rm a}
\in
\Wkp{1,\infty}_{\loc}(\mathcal I)
\subset
\Lp{\infty}_{\loc}(\mathcal I)$
and $\|\eta\|_\U<1$ almost everywhere.

Rearranging the first and third reconstruction formulas gives
\begin{equation}
\label{eq:InverseTransformationDomainIdentity}
\tau_{\rm f}\varphi
\spvek{x}{u}
={}
\spvek{
\widehat z
}{
-k_{\rm a}\eta-\beta\gamma(\eta)
}
+
\tau_{\rm f}\varphi
\spvek{L_\mu u_{\ext}}{u_{\ext}}
+
\tau_{\rm f}\varphi
\spvek{Q\dot y_{\rm ref}}{0}
-
\tau_{\rm f}
\spvek{Qq\chi}{0}
-
\tau_{\rm f}\alpha
\spvek{Q\gamma(\chi)}{0}.
\end{equation}
Each element on the right-hand side belongs to $\dom(S)$ almost
everywhere. Since $\dom(S)$ is a linear subspace, it follows that
$\spvek{x}{u} \in\dom(S)$
almost everywhere.

Applying the system node to
\eqref{eq:InverseTransformationDomainIdentity} yields
\begin{equation}
\label{eq:InverseTransformationNodeIdentity}
\tau_{\rm f}\varphi
\sbvek{A\&B}{C\&D}
\spvek{x}{u}
={}
\spvek{\widehat f}{\eta}
+
\tau_{\rm f}\varphi
\spvek{G_\mu u_{\ext}}{0}
+
\tau_{\rm f}\varphi
\spvek{
AQ\dot y_{\rm ref}
}{
\dot y_{\rm ref}
}
-
\tau_{\rm f}
\spvek{
qAQ\chi
}{
q\chi
}
-
\tau_{\rm f}\alpha
\spvek{
AQ\gamma(\chi)
}{
\gamma(\chi)
}.
\end{equation}
Its second component is precisely the formula for $v$ in
\eqref{eq:ReconstructionFormulas}. Thus
$C\&D\spvek{x}{u}=v$
almost everywhere.

Differentiating the reconstruction formula for $x$ and using the
first two relations in
\eqref{eq:TransformedClosedLoop},
\eqref{eq:ChiDynamics}, and
the definition of $q$
gives
\[
\tau_{\rm f}\varphi\dot x
={}
\widehat f
+
\tau_{\rm f}\varphi G_\mu u_{\ext}
+
\tau_{\rm f}\varphi AQ\dot y_{\rm ref}
-
\tau_{\rm f}qAQ\chi
-
\tau_{\rm f}\alpha AQ\gamma(\chi).
\]
Comparison with the first component of
\eqref{eq:InverseTransformationNodeIdentity} yields
$\dot x = A\&B\spvek{x}{u}$
almost everywhere. Hence $(x,u,v)$ is a strong trajectory of
\eqref{eq:VelocityNode}.

It remains to verify the position and controller relations. Since
$e=\varphi^{-1}\chi$,
we have
\[
\dot e
=
\varphi^{-1}
\left(
\dot\chi-(q-\delta)\chi
\right)
=
\frac{1}{\tau_{\rm f}\varphi}
\left(
\eta
-
\tau_{\rm f}\alpha\gamma(\chi)
-
\tau_{\rm f}q\chi
\right)
=
v-\dot y_{\rm ref}.
\]
Thus
$\dot y=v$.
Moreover,
\[
\dot e
+
\left(
\frac{\alpha}
{1-\varphi^2\|e\|_\U^2}
+
q
\right)e
=
\frac{1}{\tau_{\rm f}\varphi}\eta
=
e_{\rm r}.
\]
Finally,
\[
u
=
u_{\ext}
-
\frac{1}{\tau_{\rm f}\varphi}
\left(
k_{\rm a}\eta
+
\beta\gamma(\eta)
\right)
=
u_{\ext}
-
k_{\rm a}e_{\rm r}
-
\frac{\beta e_{\rm r}}
{1-\tau_{\rm f}^2\varphi^2
\|e_{\rm r}\|_\U^2}.
\]
Hence
$\dot y=v$, $e=y-y_{\rm ref}$,
and the first three relations in
\eqref{eq:ClosedLoopControlVelocity} hold almost everywhere.

The transformation formulas
\eqref{eq:TransformedState} and
\eqref{eq:ReconstructionFormulas} are algebraic inverses of each
other for the same gain $k_{\rm a}$. This completes the proof.
\end{proof}

\bibliographystyle{abbrv-doi}  \bibliography{References}%

@article{BerIlcRya2021,
title = {Funnel control of nonlinear systems},
journal ={Math.Control Signals Systems},
volume = {33},
pages = {151-194},
year = {2021},
author = {Berger, Thomas and Ilchmann, Achim and Ryan, Eugene P},
doi = {10.1007/s00498-021-00277-z},
}

@article{Reis2026_Dissip,
  author       = {Reis, Timo},
  title        = {The infinite-dimensional dissipation inequality},
  journal      = {ESAIM Control Optim. Calc. Var.},
  volume       = {32},
  year         = {2026},
  doi          = {10.1051/cocv/2026053},
  eprint       = {2508.18436},
  eprinttype   = {arXiv},
  primaryclass = {math.FA}
}

@article{Ber2018,
  title={Funnel control for nonlinear systems with known strict relative degree},
  author={Berger, Thomas and L{\^e}, Huy Ho{\`a}ng and Reis, Timo},
  journal={Automatica J. IFAC},
  volume={87},
  pages={345--357},
  year={2018},
  publisher={Elsevier},
  doi = {10.1016/j.automatica.2017.10.017},
}

@article{Ber2020,
title = {Funnel control in the presence of infinite-dimensional internal dynamics},
journal = {Systems Control Lett.},
volume = {139},
pages = {104678},
year = {2020},
author = {Thomas Berger and Marc Puche and Felix L. Schwenninger},
doi = {10.1016/j.sysconle.2020.104678}
}

@article{IlcRyaSan2002,
author = {Ilchmann, Achim and Ryan, E.P. and Sangwin, Christopher},
journal = {ESAIM Control Optim. Calc. Var.},
pages = {471--493},
volume = {7},
year = {2002},
doi = {10.1051/cocv:2002064},
title = {Tracking with prescribed transient behaviour},
}

@article{IlcTre2004,
title = {Input constrained funnel control with applications to chemical reactor models},
journal = {Systems Control Lett.},
volume = {53},
number = {5},
pages = {361-375},
year = {2004},
doi = {10.1016/j.sysconle.2004.05.014},
author = {Achim Ilchmann and Stephan Trenn}
}

@article{BerRei2014,
title = {Zero dynamics and funnel control for linear electrical circuits},
journal = {J. Franklin Inst.},
volume = {351},
number = {11},
pages = {5099-5132},
year = {2014},
author = {Thomas Berger and Timo Reis},
doi = {10.1016/j.jfranklin.2014.08.006}
}

@INPROCEEDINGS{Hac2014,
  author       = {Hackl, C. M.},
  title        = {Funnel control for wind turbine systems},
  booktitle    = {2014 IEEE Conference on Control Applications (CCA)},
  year         = {2014},
  pages        = {1377--1382},
  organization = {IEEE},
  doi          = {10.1109/CCA.2014.6981516}
}

@article{MarTimFel2021, 
title = {Funnel control for boundary control systems},
journal = {Evol. Equ. Control Theory},
volume = {10},
number = {3},
pages = {519-544},
year = {2021},
doi = {10.3934/eect.2020079},
author = {Marc Puche and Timo Reis and Felix L. Schwenninger}
}

@ARTICLE{Ber2024,
  author={Berger, Thomas},
  journal={IEEE Trans. Automat. Control}, 
  title={Input-Constrained Funnel Control of Nonlinear Systems}, 
  year={2024},
  volume={69},
  number={8},
  pages={5368-5382},
  doi={10.1109/TAC.2024.3352362}
}

@Book{sta2005,
	title     = {Well-Posed Linear Systems},
	publisher = {Cambridge University Press},
	year      = {2005},
	author    = {Staffans, Olof J},
        series = {Encyclopedia of Mathematics and Its Applications},
	volume    = {103},
        doi={10.1017/CBO9780511543197}
}

@misc{GovHasPauRei2025,
  author     = {Govindaraj, Thavamani and Hastir, Anthony and Paunonen, Lassi and Reis, Timo},
  title      = {Funnel Control for Passive Infinite-Dimensional Systems},
  year       = {2025},
  eprint     = {2510.01027},
  howpublished = {arxiv},
  doi = {10.48550/arXiv.2510.01027}
}

@INPROCEEDINGS{HacKen2012,
  author       = {Hackl, C. M. and Kennel, R. M.},
  title        = {Position funnel control for rigid revolute joint robotic manipulators with known inertia matrix},
  booktitle    = {2012 20th Mediterranean Conference on Control \& Automation (MED)},
  year         = {2012},
  pages        = {615--620},
  organization = {IEEE},
  doi          = {10.1109/MED.2012.6265706}
}

@article{IlcRyaTre2005,
title = {Tracking control: Performance funnels and prescribed transient behaviour},
journal = {Systems Control Lett.},
volume = {54},
number = {7},
pages = {655-670},
year = {2005},
issn = {0167-6911},
doi = {10.1016/j.sysconle.2004.11.005},
author = {Achim Ilchmann and Eugene P. Ryan and Stephan Trenn},
}

@article{ReiSel2015,
author = {Reis, Timo and Selig, Tilman},
title = {Funnel Control for the Boundary Controlled Heat Equation},
journal = {SIAM J. Control Optim.},
volume = {53},
number = {1},
pages = {547-574},
year = {2015},
doi = {10.1137/140971567}
}

@article{Ber2022,
title = {Funnel control for a moving water tank},
journal = {Automatica J. IFAC},
volume = {135},
pages = {109999},
year = {2022},
issn = {0005-1098},
doi = {10.1016/j.automatica.2021.109999},
author = {Thomas Berger and Marc Puche and Felix L. Schwenninger},
}

@article{HasWinDoc2023,
title = {Funnel control for a class of nonlinear infinite-dimensional systems},
journal = {Automatica J. IFAC},
volume = {152},
pages = {110964},
year = {2023},
issn = {0005-1098},
doi = {10.1016/j.automatica.2023.110964},
author = {Anthony Hastir and Joseph J. Winkin and Denis Dochain},
}

@article{IlchmannRyan2008,
  author       = {A. Ilchmann and E. P. Ryan},
  title        = {High-gain control without identification: a survey},
  journal      = {GAMM-Mitt.},
  year         = {2008},
  volume       = {31},
  number       = {1},
  pages        = {115--125},
  doi = {10.1002/gamm.200890000},
}

@book{Rudin1987,
  author    = {Rudin, Walter},
  title     = {Real and Complex Analysis},
  edition   = {3},
  publisher = {McGraw--Hill},
  address   = {New York},
  year      = {1987}
}

@article{IlchmannSeligTrunk2016,
  author       = {Achim Ilchmann and Tilman Selig and Carsten Trunk},
  title        = {The {B}yrnes--{I}sidori Form for Infinite-Dimensional Systems},
  journal      = {SIAM J. Control Optim.},
  year         = {2016},
  volume       = {54},
  number       = {3},
  pages        = {1504--1534},
  doi          = {10.1137/130942413},
}

@article{BergerBreitenPucheReis2021,
  author       = {Thomas Berger and Tobias Breiten and Marc Puche and Timo Reis},
  title        = {Funnel control for the monodomain equations with the {F}itz{H}ugh--{N}agumo model},
  journal      = {J. Differential Equations},
  year         = {2021},
  volume       = {286},
  pages        = {164--214},
  doi          = {10.1016/j.jde.2021.03.012}, }

@book{AdamsFournier2003,
  author       = {Adams, Robert A. and Fournier, John J. F.},
  title        = {Sobolev Spaces},
  series       = {Pure and Applied Mathematics},
  volume       = {140},
  edition      = {Second},
  publisher    = {Elsevier / Academic Press},
  address      = {Amsterdam, NL},
  year         = {2003},
  pages        = {xiv + 305},
  isbn         = {978-0-12-044143-3},
}

@book{DiestelUhl1977,
  author    = {Diestel, J. and Uhl, J. J.},
  title     = {Vector Measures},
  series    = {Mathematical Surveys and Monographs},
  volume    = {15},
  publisher = {American Mathematical Society},
  address   = {Providence, RI},
  year      = {1977},
  pages     = {xiii+322},
  isbn      = {978-0-8218-1515-0},
doi = {10.1090/surv/015}
}

@book{EngelNagel2000,
  author    = {Engel, K.-J. and Nagel, R.},
  title     = {One-Parameter Semigroups for Linear Evolution Equations},
  series    = {Graduate Texts in Mathematics},
  volume    = {194},
  publisher = {Springer-Verlag},
  address   = {New York},
  year      = {2000},
  pages     = {xxii+586},
  isbn      = {978-0-387-98463-1},
  doi       = {10.1007/b97696},
}

@article{Sta02,
	author = {Staffans, O. J.},
	journal = {Math. Control Signal},
	number = {4},
	pages = {291--315},
	title = {Passive and conservative continuous-time impedance and scattering systems. {Part I}: {W}ell-posed systems},
	volume = {15},
	year = {2002},
    doi={10.1007/s004980200012}}

@book{Barbu2010,
  author    = {Barbu, Viorel},
  title     = {Nonlinear Differential Equations of Monotone Types in Banach Spaces},
  series    = {Springer Monographs in Mathematics},
  publisher = {Springer},
  address   = {New York},
  year      = {2010},
  doi       = {10.1007/978-1-4419-5542-5},
  url       = {https://link.springer.com/book/10.1007/978-1-4419-5542-5}
}
\end{document}